\documentclass[a4paper,12pt]{article}
\usepackage{a4wide}
\usepackage[utf8]{inputenc}
\usepackage{color}
\usepackage[english]{babel}
\usepackage{amsmath,amsthm,amssymb}
\usepackage{amsfonts}
\usepackage{graphicx}
\newtheorem{thm}{Theorem}
\newtheorem*{mainthm}{Main Theorem}
\newtheorem{cor}{Corollary}
\newtheorem{lem}{Lemma}[section]
\newtheorem{que}[lem]{Question}
\newtheorem{prop}[lem]{Proposition}
\usepackage{varioref}

\theoremstyle{definition}
\newtheorem{ex}[lem]{Example}
\newtheorem{defn}[lem]{Definition}
\newtheorem{rem}[lem]{Remark}
\numberwithin{equation}{section}


\title{Integrability of Continuous Bundles}

\author{Stefano Luzzatto\\ Abdus Salam International Centre for Theoretical Physics (ICTP), Trieste, Italy \\
\texttt{luzzatto@ictp.it}\and Sina Tureli \\ Imperial College London \\ \texttt{sinatureli@gmail.com}\and Khadim War \\ International School for Advanced Studies (SISSA) and \\ Abdus Salam International Centre for Theoretical Physics (ICTP), Trieste, Italy \\ \texttt{kwar@ictp.it}}

\date{August 10, 2016}

\begin{document}

\maketitle

\abstract{We give new sufficient conditions for the integrability and unique integrability of \emph{continuous}
tangent sub-bundles on manifolds of arbitrary dimension, generalizing Frobenius' classical Theorem for \(  C^{1}  \) sub-bundles. Using these conditions we derive new criteria
for uniqueness of solutions to ODE's and PDE's   and for the integrability of invariant bundles
in dynamical systems. In particular we give a novel proof of the Stable Manifold Theorem and  prove some
integrability results for dynamically defined  dominated splittings.
}

\section{Introduction and statement of results}

In this paper we address the question of integrability and unique integrability of \emph{continuous} tangent sub-bundles on $C^r$ manifolds with $r \geq 1$. A continuous $m$-dimensional tangent sub-bundle (or a distribution) $E$ on a $m+n$ dimensional $C^r$ manifold \(  M  \) is a continuous
choice of $m$-dimensional linear subspaces $E_p\subset T_pM$ at each point $p\in M$.
A $C^1$ sub-manifold $N\subset M$ is a local integral manifold of $E$ if $T_pN=E_p$ at each point $p\in N$. The
distribution $E$ is
\emph{integrable}  if there exists local integral manifolds through every point, and \emph{uniquely
integrable} if these
integral manifolds are unique in the sense that whenever two integral manifolds intersect, their
intersection is relatively open in both
integral manifolds.

The question of integrability and unique integrability is a classical problem that goes back to work of Clebsch, Deahna and Frobenius \cite{Cle66, Dea40, Fro77} in the mid 1800's. Besides their intrinsic geometric interest, integrability results have many applications to various areas of mathematics including the existence and uniqueness of solutions for systems of ordinary and partial differential equations and to dynamical systems. The early results develop conditions and techniques to treat cases where  the sub-bundles, or the corresponding equations, are at least \(  C^{1}  \) and, notwithstanding the importance and scope of these results, it has proved extremely difficult to relax the differentiability assumptions completely. Some partial generalizations have been obtained by Hartman \cite{Har02} and other authors \cite{Sim96, MonMor13, Ram07} but all still require some form of weak differentiability, e.g. a Lipschitz condition.

The main point of our results is to formulate new integrability conditions for purely \emph{continuous} equations and sub-bundles. We apply these conditions to obtain results for examples which are not more than H\"older continuous and for which the same statements cannot be obtained by any other existing methods.   Our Main Theorem which contains the most general version of our results is stated in Section \ref{sec-mainthm}. All other results are essentially corollaries and applications of the Main Theorem and are stated in separate subsections of the introduction. We begin with the results for ODE's and PDE's which are of independent interest and easy to formulate.

\medskip\noindent
\emph{Acknowledgments:} Most of the work for this paper was carried out at the Abdus Salam International Center for Theoretical Physics. ST was partially supported by ERC AdG grant no: 339523 RGDD and a major part of the work was completed when he was a Ph.D student in SISSA and ICTP.


\subsection{Uniqueness  of solutions for ODE's}\label{sec-introode}

In this section we consider  continuous ordinary differential equations of the form
\begin{equation}\label{eq-ode1}
\qquad\qquad  \frac{dy^i}{dt} = F^i(t,y(t)), \qquad  (t_0,y(t_0)) \in V
\end{equation}
where $i=1,...,n$ and   $F=(F^1(t,y),...,F^n(t,y)): V\subset \mathbb{R}^{n+1}\rightarrow \mathbb{R}^n$ is a
continuous vector field.
By Peano's theorem, an ODE with a continuous vector field
always admits solutions and by Picard-Lindel\"of-Cauchy-Lipschitz theorem, an ODE with Lipschitz
vector fields admits unique solutions through every point. The Lipschitz condition is however not necessary and a lot of work exists establishing weaker regularity conditions which imply uniqueness, see \cite{AgaLak93} for a comprehensive survey. One such condition is Osgood's criterion (see Theorem 1.4.2 in \cite{AgaLak93})
where the  modulus of continuity $w$ (we give the precise definition below) with respect to the space
variables satisfies
\begin{equation}\label{eq-osgood}
\lim_{\epsilon\rightarrow 0}\int_{0}^{\epsilon}\frac{1}{w(s)}ds < \infty.
\end{equation}
This condition, albeit much weaker than Lipschitz, is also not necessary as there are examples of uniquely integrable ODE's that do not satisfy Osgood's criterion, such as the case $F(t,x) = e^t + x^{\alpha}$ for $\alpha <1$ which was studied in \cite{Ara03}.
 We give here a new condition for uniqueness of solutions for continuous ODE's and a simple  example  of an ODE which satisfies our conditions,  and thus is
 uniquely integrable, but does not satisfy any previously known condition for
 uniqueness.

\begin{defn} Let  $w: \mathbb{R}^{+} \rightarrow \mathbb{R}^{+}$ be an increasing, continuous function such that
$\lim_{s\rightarrow 0}w(s)=0$.
A function $F: U\subset \mathbb{R}^n=(\xi^1,...,\xi^n) \rightarrow \mathbb{R}^m$ is said to
have \emph{modulus of continuity
$w$ with respect to variable} $\xi^i$ if  there exists a constant \( K>0 \) such that for all
$(\xi^1,...,\xi^n)\in U$ and for all $s$ small enough
so that $(\xi^1,...,\xi^i+s,...,\xi^n)\in U$,
$$
|F(\xi^1,...,\xi^i+s,...,\xi^n)-F(\xi^1,...,\xi^n)|\leq K w(|s|).
$$
We say that \(F\) has modulus of continuity \(w\) if it has modulus of continuity \(w\) with respect to all variables.
We denote by $C^{r+ w}$ functions whose $r^{th}$ order partial derivatives have modulus of continuity
$w$ (in the case $r=0$ they will simply be functions with modulus of continuity $w$).
\end{defn}

We denote the extended version of the  vector-field $F(t,x)$  by
$$
 \tilde{F} = \frac{\partial}{\partial t} + \sum_{i=1}^nF^i(t,y)\frac{\partial}{\partial x^i} =
 \sum_{i=1}^{n+1} \tilde{F}^i(\xi) \frac{\partial}{\partial \xi^i}
 $$
where $(\xi^1,...,\xi^{n+1})$ is a collective tag for the coordinates \(  t  \) and $(y^1,...,y^n)$.
Note that
$\tilde{F}(\xi)\neq 0$ for all $\xi$ since $\tilde{F}^1 =1$.

\begin{thm}\label{thm-ode}
Consider the ODE in \eqref{eq-ode1} with $F: V\subset \mathbb{R}^{n+1}$ a continuous  function with modulus of continuity \( w_{1} \). Let
${\xi}\in V$ and $i \in \{1,...,n+1\}$ be such that $\tilde{F}^{i}({\xi}) \neq
0$ and suppose that ${\tilde{F}}$ has modulus
of continuity $w_2$  with respect to variables $(\xi^1, ... , \xi^{i-1},\xi^{i+1},...\xi^{n+1})$ and \begin{equation}\label{eq-mcont2}\lim_{s\rightarrow 0} w_1(s) e^{w_2(s)/s}=0.\end{equation}
Then the ODE \eqref{eq-ode1} with initial condition $(t_0,\xi)$ has a unique solution.
\end{thm}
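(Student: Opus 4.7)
The plan is to reinterpret the ODE as an integrability question for a continuous tangent distribution and then invoke the Main Theorem. The extended vector field $\tilde F$ is continuous on $V \subset \mathbb{R}^{n+1}$ and nowhere zero (since $\tilde F^1 = 1$), so it spans a $1$-dimensional continuous sub-bundle $E \subset T\mathbb{R}^{n+1}$; unique integrability of $E$ through $\xi = (t_0, y(t_0))$ is equivalent to the existence of a unique solution of \eqref{eq-ode1} with the prescribed initial condition.

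First I would exploit the assumption $\tilde F^{i}(\xi) \ne 0$ to rectify the problem so that $\xi^i$ plays the role of ``time''. In a neighborhood of $\xi$ where $\tilde F^{i}$ stays bounded away from zero, every integral curve of $\tilde F$ can be written as a graph over the $\xi^i$-axis whose remaining components satisfy
\[
\frac{d\xi^j}{d\xi^i} \;=\; \frac{\tilde F^{j}(\xi)}{\tilde F^{i}(\xi)} \;=:\; G^{j}(\xi), \qquad j \ne i.
\]
Since dividing by the continuous, nonvanishing $\tilde F^{i}$ preserves moduli of continuity up to constants, $G^j$ has modulus of continuity $w_{2}$ in the ``transverse'' variables $\{\xi^k : k \ne i\}$ and modulus of continuity $w_{1}$ in the ``time'' variable $\xi^{i}$. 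Uniqueness of the original ODE reduces to uniqueness of solutions for this rectified system.

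The heart of the argument is a Gronwall-type comparison for two candidate solutions $\gamma_{1}, \gamma_{2}$ with a common initial value. Writing $\delta(\xi^i) = |\gamma_{1}(\xi^i) - \gamma_{2}(\xi^i)|$ and splitting the right-hand side $G(\gamma_{1}) - G(\gamma_{2})$ into its transverse part and its dependence on the parameter, I expect a differential inequality of the schematic form
\[
\delta'(\xi^i) \;\le\; K\,w_{1}(|\xi^i - \xi_{0}^{i}|) \;+\; K\,\frac{w_{2}(\delta)}{\delta}\,\delta.
\]
The integrating factor associated with the linear-in-$\delta$ coefficient $w_{2}(\delta)/\delta$ produces exactly the exponential $e^{w_{2}(\delta)/\delta}$, and combined with the inhomogeneous contribution governed by $w_{1}$ it yields the product $w_{1}(s)\,e^{w_{2}(s)/s}$ appearing in hypothesis \eqref{eq-mcont2}. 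That limit condition is then precisely what forces $\delta \equiv 0$ on a neighborhood of $\xi_{0}^{i}$.

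The main technical obstacle is that $w_{2}$ need not be Lipschitz, so the coefficient $w_{2}(\delta)/\delta$ can blow up as $\delta \to 0^{+}$ and a naive application of Gronwall's inequality is not available. I would handle this either by discretizing along a sequence of dyadic scales and iterating the bound on each scale, or by appealing directly to the corresponding step of the Main Theorem, whose integrability criterion for continuous distributions of arbitrary dimension is formulated exactly to accommodate this kind of mixed regularity; the one-dimensional specialization then transports the unique-integrability conclusion back to uniqueness of the ODE through $\xi$.
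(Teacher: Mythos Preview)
Your rectification with $\xi^i$ as the new time variable is correct and matches the paper's reduction through Theorem~\ref{thm-pde} and Theorem~\ref{thm-alt2}. The gap is in your Gronwall step. When you compare two solutions $\gamma_1, \gamma_2$ of the \emph{same} rectified equation with a common initial value, they also share the same value of $\xi^i$ at every instant, so $G(\gamma_1) - G(\gamma_2)$ sees only the transverse modulus: the honest inequality is $\delta' \le K\, w_2(\delta)$, with no $w_1$ term at all. That is Osgood's criterion for $w_2$, which is \emph{not} what hypothesis~\eqref{eq-mcont2} asserts. Your ``integrating factor'' step is also broken: the coefficient $w_2(\delta)/\delta$ depends on the unknown $\delta$, not on the independent variable, so there is no linear ODE here and nothing produces the exponential $e^{w_2(s)/s}$ in the way you describe.

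The product $w_1(s)\, e^{w_2(s)/s}$ arises only after introducing an \emph{approximation scale} $\epsilon$, which your direct comparison never does. In the paper one mollifies the coefficients $a_{ij}$ at scale $\epsilon$; then $|a^\epsilon_{ij}-a_{ij}|_\infty \le K\, w_1(\epsilon)$ controls the distance to the smooth approximant, while $|\partial a^\epsilon_{ij}/\partial y^\ell|_\infty \le K\, w_2(\epsilon)/\epsilon$ bounds only the \emph{transverse} derivatives, and this is exactly what the exponent in the exterior regularity condition needs (Lemma~\ref{prop-extmod}). The uniqueness argument of the Main Theorem --- Stoke's theorem together with the flow bound of Proposition~\ref{prop-boundedflow}, not a Gronwall inequality on $\delta$ --- then yields a bound of the form $w_1(\epsilon)\, e^{C\, w_2(\epsilon)/\epsilon}$ and one sends $\epsilon\to 0$. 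Your closing appeal to the Main Theorem is in the right direction, but to make it a proof you must actually specify this approximating sequence and verify exterior regularity; the direct comparison of two continuous solutions cannot recover the role of $w_1$.
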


\begin{ex}\label{ex-1}
Consider the ODE
\begin{equation}\label{eq-ode3}
F(t,x,y)=\left(\frac{dx}{dt},\frac{dy}{dt}\right) = (-tlog(t^{\beta}) - x log(x^{\gamma}),
1+y^{\alpha}-xlog(x^{\delta}))
\end{equation}
for $0<\beta,\gamma,\delta < \alpha <1$ (the requirement that they are less than $1$ is not necessary but we
only impose it to make the examples non-Lipschitz and therefore more interesting). As far as we know, the
uniqueness of solutions for  \eqref{eq-ode3} cannot be verified by any existing method except ours. In
particular, Osgood criterion does not hold due to the term $y^{\alpha}$ and the main result of \cite{Ara03}
would require the right hand side to be Lipschitz with respect to $x$ and $t$ which is not the case.

To see that \eqref{eq-ode3} satisfies the assumptions of Theorem \ref{thm-ode} we will use certain elementary properties of the modulus of continuity, which for convenience we collect in \ref{prop-moc} in the Appendix.  Notice first of all that
 $F(0,0,0) = (0,1)$ so that its $y$ component is non-zero. Moreover by items $4$ and $5$ of Proposition
 \ref{prop-moc}, $F^1$ has modulus of continuity $-slog(s^{\beta})$ with respect to variable $t$ for $s\leq
 \frac{1}{e}$ and has modulus of continuity $-slog(s^{\gamma})$ with respect to variable $x$ for $s\leq
 \frac{1}{e}$ and is constant with respect to $y$; $F^2$ has modulus of continuity $-slog(s^{\delta})$ for
 $s\leq \frac{1}{e}$ with respect to variable $x$ and has modulus of continuity $s^{\alpha}$ with respect to
 variable $y$ and is constant with respect to $t$.
Therefore by Item $7$ of proposition \ref{prop-moc}, letting $\sigma = \max\{\beta,\gamma,\delta\}<\alpha$,
$F$ has modulus of continuity $s^{\alpha}$ with respect to $y$,  modulus of continuity
$w_2(s)=-slog(s^{\sigma})$ with respect to $x$ and $t$,
and finally by Item $6$ of proposition \ref{prop-moc} and the fact that $s^{\alpha} \geq -slog(s^{\sigma})$
for $|s|\leq 1$, and $s^{\alpha}$ is convex, $F$ has modulus of continuity $w_1(s)=s^{\alpha}$.
Since
$
\lim_{s\rightarrow 0}w_1(s) e^{w_2(s)/s} = \lim_{s\rightarrow 0}K x^{\alpha-\sigma}=0
$,  it has unique solutions by Theorem~\ref{thm-ode}.

\end{ex}

\begin{rem}
Theorem \ref{thm-ode} and Example \ref{ex-1} also describe a qualitative way in which regularities between variables can be traded. If a vector field $F$ has its $i^{th}$ component non-zero then you can decrease the vector field's
regularity with respect to the $i^{th}$ variable as long as you increase the others in a way described by
equation \eqref{eq-mcont2}. It is a more flexible criterion than both Osgood and that of \cite{Ara03}.
Condition \eqref{eq-mcont2} is satisfied for example if the vector field is only H\"older continuous
overall, i.e. $w_1(s) = s^{\alpha}$, but, restricting to all but one variables is just a little bit better
than H\"older continuous, such as for example  $w_2(s) =
-slog(s^{\beta})$  for some \( \beta \in (0, \alpha) \).
\end{rem}

\begin{rem}
Part of the interest in condition \eqref{eq-mcont2} lies in the fact that two different regularities \(
w_{1}, w_{2} \) to come into play. Clearly we  always have \( w_{2}\leq w_{1} \) and therefore Theorem
\ref{thm-ode} holds under the stronger condition obtained by using the overall regularity, i.e. letting \( w=w_{1}=w_{2} \) to get
\begin{equation}\label{eq-mcont3}
\lim_{s\rightarrow 0} w(s) e^{w(s)/s}=0.
\end{equation}
Thus, as an immediate corollary of Theorem \ref{thm-ode}, \emph{a  vector field \( F \) with modulus of continuity \( w \) such that
$w$ satisfies equation \eqref{eq-mcont3} is uniquely integrable}.
In view of this, it is natural to search for, and try to describe and characterize,
functions \( w \) satisfying condition \eqref{eq-mcont3} and in particular to compare this condition with Osgood's. One can check that many functions
$w$ verify both \eqref{eq-osgood} and  \eqref{eq-mcont3}, such as $w_1(s) = slog^{1+s}(s)$ or $w_1(s) = sloglog....(s)$, and many
others satisfy neither condition, such as $w_1(s)=s^{\alpha}log(s)$ for $\alpha<1$. So far we have not
however been able to show that the two conditions are equivalent nor to find any examples of functions which
satisfy one and not the other.  In any case, this simplified version also gives an interesting way to
replace Osgood's criterion with a relatively easy limit condition, at least for the most relevant examples
that we know.
\end{rem}

\begin{que}
Are conditions \eqref{eq-osgood} and \eqref{eq-mcont3} equivalent?
\end{que}

\begin{rem} Added in proof. We are grateful to Graziano Crasta for pointing out to us that  \eqref{eq-mcont3} implies 
\eqref{eq-osgood}. This implication can be obtained by studying the behavior of   $\frac{w(s)}{s} e^{w(s)/s}$ using an asymptotic expansion of the Lambert function $W(x)$, which is the function that is the solution to $W(x)e^{W(x)}=x$, as $x$ goes to infinity. One then gets that \eqref{eq-osgood} is satisfied if and only if the function 
$[s log (\frac{\rho(s)}{s})]^{-1}$ is not integrable on some right neighborhood of $0$. One can then show that \eqref{eq-mcont3} implies indeed that it is not. We are still not aware if the reverse implication is true. 
\end{rem}

\subsection{Uniqueness of solutions for PDE's}\label{sec-intropde}

In this section we consider  linear partial differential equations of the form
\begin{equation}\label{eq-pde1}
\qquad\qquad \frac{\partial y^i}{\partial x^j} = F^{ij}(x,y(x)), \qquad (x,y(x)) \in V
\end{equation}
where $i=1,...,n$, $j=1,...,m$ and $F^{ij}: V\subset \mathbb{R}^{n+m} \rightarrow \mathbb{R}$ are continuous
functions.
Note that the ODE \eqref{eq-ode1} is a special case of \eqref{eq-pde1}, the case where $m=1$. We again
denote the collective coordinates
$(\xi^1,...,\xi^{n+m})=(x^1,...,x^m,y^1,...,y^n)$.
In this case we define the \(  n\times(m+n)  \) matrix extension \( \hat F \) of the $n\times m$ matrix  $F^{ij}$ by
$$
\hat{F}^{ij}(\xi) = \delta_{ij} \ \text{for} \ 1\leq i \leq n, \ 1\leq j \leq m
$$
and
$$
\hat{F}^{ij}(\xi) = F^{ij}(\xi)  \ \text{for} \ 1\leq i \leq n, m+1\leq j\leq m+n.
$$
Given any set of indices $I=(i_1,...,i_n)$,  we
denote the submatrix
of $\hat{F}(\xi)$ which corresponds to the $i_1,...,i_n$'th columns by $\hat{F}^{I}(\xi)$.

The existence of solutions for PDE's is not automatic as it is in the ODE setting, and in particular is not a direct consequence of their regularity, and so the following result concerns the uniqueness of solutions while  assuming  their existence. Our most general results below also give conditions for existence of solutions, but they require a more geometric ``involutivity'' condition which is independent of the regularity and thus not so easy to state in this setting.

\begin{thm}\label{thm-pde}Consider the PDE in  \eqref{eq-pde1} with $F^{ij}$ continuous with modulus of continuity \( w_{1} \).
Let ${\xi} \in V$ be a point such that
for some $I=(i_1,...,i_n)$, $det(\hat{F}^{I}({\xi})) \neq 0$ and suppose that
$F^{ij}$ has modulus of continuity $w_2$ with respect to the variables $\{\xi^{i_1},...,\xi^{i_n}\}$
and that
\begin{equation}
\label{eq-mcont4}\lim_{s\rightarrow 0} w_1(s) e^{w_2(s)/s}=0.
\end{equation}
Then if the PDE
\eqref{eq-pde1} admits a solution
at $\xi$, that solution is unique.
\end{thm}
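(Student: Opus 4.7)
The plan is to reduce PDE uniqueness to Theorem \ref{thm-ode} via the geometric reformulation of \eqref{eq-pde1} as an integrability problem. A \( C^{1} \) solution \( y(x) \) of \eqref{eq-pde1} is equivalent to its graph \( G_{y} = \{(x,y(x))\} \subset V \) being an \( m \)-dimensional integral manifold of the continuous tangent sub-bundle \( E \) on \( V \) whose fibre \( E_{\xi} \subset T_{\xi}\mathbb{R}^{n+m} \) is the \( m \)-dimensional subspace determined by the \( n\times(n+m) \) matrix \( \hat{F}(\xi) \). The hypothesis \( \det(\hat{F}^{I}(\xi))\neq 0 \) is exactly the transversality statement that the coordinate projection \( E_{\xi}\to\mathbb{R}^{m} \) onto the coordinates indexed by \( J := \{1,\ldots,n+m\}\setminus I \) is an isomorphism; by continuity this persists on a neighbourhood of \( \xi \). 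In particular, for each \( j \in J \) there is a unique continuous vector field
\[
X_{j}(\xi) = \frac{\partial}{\partial \xi^{j}} + \sum_{k\in I}G^{kj}(\xi)\,\frac{\partial}{\partial \xi^{k}} \in E_{\xi},
\]
obtained by inverting \( \hat{F}^{I}(\xi) \) against the appropriate columns of \( \hat{F}(\xi) \). By the elementary manipulations of Proposition \ref{prop-moc}, the coefficients \( G^{kj} \) inherit overall modulus of continuity \( w_{1} \) and modulus \( w_{2} \) with respect to the variables indexed by \( I \).

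The crucial observation is that \( X_{j} \) has vanishing component in every direction \( \xi^{j'} \) with \( j'\in J\setminus\{j\} \). Consequently, the integral curve of \( X_{j} \) through any initial point satisfies an ODE on \( \mathbb{R}^{1+n} \) whose independent variable is \( \xi^{j} \) and whose state variables are \( (\xi^{k})_{k\in I} \); the remaining coordinates \( \xi^{j'} \) enter only as frozen parameters. The right-hand side of this reduced ODE has overall modulus \( w_{1} \) and modulus \( w_{2} \) with respect to its state variables, so hypothesis \eqref{eq-mcont4} is precisely hypothesis \eqref{eq-mcont2} for the extended vector field of this ODE. Theorem \ref{thm-ode} then yields a unique integral curve of \( X_{j} \) through every point in a neighbourhood of \( \xi \).

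Given two solutions \( y_{1}, y_{2} \) of \eqref{eq-pde1} through \( \xi \), with graphs \( G_{y_{1}}, G_{y_{2}} \), the common parameterization is built by iterating these flows: writing \( J=\{j_{1},\ldots,j_{m}\} \) and letting \( \phi^{X_{j}}_{t} \) denote the (now unique) flow of \( X_{j} \), set
\[
\Phi(t_{1},\ldots,t_{m}) = \phi^{X_{j_{m}}}_{t_{m}} \circ \cdots \circ \phi^{X_{j_{1}}}_{t_{1}}(\xi).
\]
Since \( X_{j}\in E=TG_{y_{\alpha}} \) at every point of \( G_{y_{\alpha}} \) and the integral curves of \( X_{j} \) are unique, each flow \( \phi^{X_{j}}_{t} \) leaves \( G_{y_{\alpha}} \) invariant when started from a point of \( G_{y_{\alpha}} \); iterating, \( \operatorname{Im}\Phi \subset G_{y_{1}}\cap G_{y_{2}} \). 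The \( J \)-projection \( \Pi_{J}\circ\Phi \) is the translation \( (t_{1},\ldots,t_{m})\mapsto\Pi_{J}(\xi)+(t_{1},\ldots,t_{m}) \), so \( \Phi \) is injective and its image is a continuous graph over a neighbourhood of \( \Pi_{J}(\xi) \). Finally, because each \( G_{y_{\alpha}} \) is itself a \( C^{1} \) graph over the \( J \)-coordinates near \( \xi \) (by the transversality hypothesis combined with the inverse function theorem applied to the \( C^{1} \) map \( x\mapsto (x,y_{\alpha}(x)) \)), and both graphs must agree with \( \operatorname{Im}\Phi \), the graphing functions must coincide on a common neighbourhood, forcing \( y_{1}\equiv y_{2} \) locally.

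The main technical obstacle is the middle step: correctly identifying which variables play the role of state versus frozen parameter in the reduced ODE for \( X_{j} \), and verifying that the rational manipulations producing the \( G^{kj} \) from the entries of \( \hat{F} \) preserve the relevant moduli of continuity (with \( w_{2} \) preserved specifically along the \( I \)-directions, so that it coincides with the state direction of the reduced ODE). Once this is in place, the composed-flow construction is the natural higher-dimensional counterpart of the one-dimensional flow that underlies Theorem \ref{thm-ode}.
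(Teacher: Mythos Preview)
Your argument is correct and follows the same geometric reformulation as the paper, but reorganizes the logical dependencies. The paper deduces Theorem~\ref{thm-pde} from the bundle-level Theorem~\ref{thm-alt2}: it writes the PDE as the distribution $E$ annihilated by $\eta_i = dy^i - \sum_j F^{ij}\,dx^j$, changes basis via $(\hat F^I)^{-1}$ to put $\mathcal A^1(E)$ in the canonical form $\alpha_\ell = d\xi^{i_\ell} + \sum_{j\notin I} g_{\ell j}\,d\xi^j$ so that $E$ is transverse to the $I$-directions with transversal modulus $w_2$, and then invokes Theorem~\ref{thm-alt2} directly. You instead reduce to Theorem~\ref{thm-ode}: you construct the same canonical frame $\{X_j\}_{j\in J}$, observe that the flow of each $X_j$ lives in a $(1+n)$-dimensional slice where it becomes an ODE satisfying the hypotheses of Theorem~\ref{thm-ode}, and then supply by hand the composed-flow argument (which in the paper is packaged into the uniqueness half of the Main Theorem and hence implicitly into Theorem~\ref{thm-alt2}) to conclude that the two solution graphs coincide.

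One caveat: within the paper's deductive order your route is circular, since Theorem~\ref{thm-ode} is itself derived as the $m=1$ specialization of Theorem~\ref{thm-pde}. What your argument really shows is that Theorem~\ref{thm-pde} follows from any independent proof of Theorem~\ref{thm-ode} plus the composed-flow step; since Theorem~\ref{thm-ode} is precisely the rank-one case of Theorem~\ref{thm-alt2}, the two routes have identical mathematical content and differ only in where the appeal to exterior regularity (the mollification argument of Lemma~\ref{prop-extmod}) is located.
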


\begin{rem}
Notice that if  \(  I=(m+1,...,m+n)  \) then \(  det(\hat F^{I}(\xi))=1\neq0  \) and therefore if $F^{ij}(x,y)$ has modulus of continuity $w_2$ with respect to variables $(y^1,...,y^n)$
so that \eqref{eq-mcont4} is satisfied, then  the solutions of \eqref{eq-pde1}, whenever they exists, are
unique.
\end{rem}

Constructing examples of PDE's satisfying our conditions is more complicated than constructing examples of
ODE's because of the problem of existence of solutions mentioned above. We therefore formulate our example within a special class of equations for which existence can be verified directly.
 Suppose the
functions \( F^{ij} \)  are of the form
\begin{equation}\label{eq-special}
F^{ij}(x,y) = G_i(y^i)\frac{\partial H_{i}}{\partial x^j}(x)
\end{equation}
with  $(x,y)\in V=V_1 \times V_2$, for continuous functions $G_{i}: U_i\subset
\mathbb{R} \rightarrow \mathbb{R}$, $H_{i}:V_2\subset \mathbb{R}^m \rightarrow \mathbb{R}$ for $j=1,...,m$,
$i=1,...,n$, $V_1 = U_1 \times ... \times U_n$. In this case the next proposition (proved in Section \ref{sec-ex}) tells us that we can get existence of solutions  with no additional regularity assumptions.

\begin{prop}\label{pro-pde}
Consider a partial differential equation \eqref{eq-pde1} where the functions \( F^{ij} \) are of  the form
\eqref{eq-special}. Then \eqref{eq-pde1} admits solutions through every point.
\end{prop}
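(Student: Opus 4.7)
The key structural observation is that in the product form \eqref{eq-special} the right-hand side of the $i$-th equation depends on $y$ only through its $i$-th component $y^i$, so the $n$ equations decouple. My plan is to construct each $y^i$ as a scalar-valued function of $x$ alone, independently of the others, via separation of variables; the vector $y=(y^1,\ldots,y^n)$ so assembled will then automatically solve the full system. Fix once and for all a point $(x_0,y_0)\in V$ at which a solution is sought.

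For each $i\in\{1,\ldots,n\}$, split into two cases according to whether $G_i$ vanishes at $y_0^i$. If $G_i(y_0^i)=0$, the constant function $y^i(x)\equiv y_0^i$ satisfies every equation $\partial y^i/\partial x^j = G_i(y^i)\,\partial H_i/\partial x^j$ because both sides are identically zero. Otherwise, continuity of $G_i$ yields an open interval $U\ni y_0^i$ on which $1/G_i$ is continuous and of constant sign. Define
$$
\Phi_i(u) := \int_{y_0^i}^{u}\frac{ds}{G_i(s)}, \qquad u\in U,
$$
which is $C^1$, strictly monotone, and satisfies $\Phi_i(y_0^i)=0$; its local inverse $\Phi_i^{-1}$ is $C^1$ on a neighborhood of $0$ with $(\Phi_i^{-1})'(t) = G_i(\Phi_i^{-1}(t))$. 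Now set
$$
y^i(x) := \Phi_i^{-1}\bigl(H_i(x)-H_i(x_0)\bigr),
$$
which is well-defined and $C^1$ on a neighborhood of $x_0$: $H_i$ is $C^1$ there (continuity of $F^{ij}$ together with $G_i\neq 0$ near $y_0^i$ forces $\partial H_i/\partial x^j$ to be continuous), and $H_i(x_0)-H_i(x_0)=0$ lies in the domain of $\Phi_i^{-1}$. The chain rule then gives
$$
\frac{\partial y^i}{\partial x^j}(x) = (\Phi_i^{-1})'\bigl(H_i(x)-H_i(x_0)\bigr)\,\frac{\partial H_i}{\partial x^j}(x) = G_i\bigl(y^i(x)\bigr)\,\frac{\partial H_i}{\partial x^j}(x),
$$
which is precisely the required equation, and $y^i(x_0)=\Phi_i^{-1}(0)=y_0^i$.

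I do not expect a serious obstacle: the argument is the classical separation-of-variables trick, made possible entirely by the product structure \eqref{eq-special}. The only subtlety is the case split on whether $G_i$ vanishes at the initial value, which affects only existence and benignly (the constant solution takes over). Assembling the components $y^i$ yields the desired local solution of the full PDE \eqref{eq-pde1} through the arbitrarily chosen point $(x_0,y_0)\in V$.
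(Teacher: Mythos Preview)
Your argument is correct and takes a genuinely different route from the paper. The paper proves this proposition by appealing to its own main machinery: it rewrites the system via the annihilating forms $\eta_i = dy^i - G_i(y^i)\,d_xH_i$, mollifies $G_i$ and $H_i$ to obtain $C^1$ approximations $\eta_i^k$, and then verifies by direct computation that $\eta_1^k\wedge\cdots\wedge\eta_n^k\wedge d\eta_\ell^k$ vanishes identically, so that strong asymptotic involutivity holds and Theorem~\ref{thm-main} yields integrability. You instead exploit the decoupling directly and write down the solution by separation of variables, treating each $y^i$ as an autonomous scalar unknown; the case split on $G_i(y_0^i)=0$ is handled cleanly. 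Your approach is far more elementary and has the advantage of producing the integral manifolds explicitly, without any appeal to the paper's approximation framework. The paper's approach, on the other hand, serves its narrative purpose: it illustrates that the abstract asymptotic-involutivity criterion is actually checkable in concrete families, and keeps the proposition within the same toolkit used for the subsequent uniqueness examples.
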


This allows us to give examples of PDE's to which we can apply our uniqueness criterion.
\begin{ex} Consider the PDE
\begin{equation}\label{eq-pde2}\frac{\partial y^i}{\partial x^j}(x,y) =
-(x^j)^{\alpha_{ij}}y^ilog((y^i)^{\beta_i})\end{equation}
for  parameters $0 < \beta_i, \alpha_{ij} <1$ and also $  \beta = \max_{i}\beta_{i} < \alpha =
\min_{i,j}\alpha_{ij}$.
This equation can be written in the form \eqref{eq-special} with
$$
H_{i}(x) = \sum_{j=1}^n\frac{(x^j)^{\alpha_{ij}+1}}{{\alpha_{ij}+1}}
\quad\text{ and } \quad
G_i(y^i) = -y^ilog((y^i)^{\beta_i})
$$
Therefore by Proposition \ref{pro-pde} it admits solutions. For uniqueness, we have that the regularity of the $H_{i}$'s
is $C^{1+w_3}$ with $w_3(s) = s^{\alpha}$
and the regularity of the  $G_i$'s is $C^{w_2}$ with $w_2(s) = -slog(s^{\beta})$. Therefore, letting $w_1(s) = \max \{w_2(s),w_3(s)\}$, we have
$\lim_{s\rightarrow 0}w_1(s) e^{w_2(s)/s} = \lim_{s\rightarrow 0}K x^{\alpha-\beta}=0$ and using Theorem \ref{thm-pde} we have that
this system has unique solutions.
\end{ex}

\begin{ex}
Consider the PDE
\[
\begin{aligned}\label{eq-pde3}
\frac{\partial y^1}{\partial x^1}(x,y) &= (1-x^1ln((x^1)^{\alpha_{11}}))((y^1)^{\beta_1}+1)\\
\frac{\partial y^1}{\partial x^2}(x,y) &= (x^2)^{\alpha_{12}}(1+(y^1)^{\beta_1})\\
\frac{\partial y^2}{\partial x^1}(x,y) &= y^2ln((y^2)^{\beta_{2}})x^1ln((x^1)^{\alpha_{21}})\\
\frac{\partial y^2}{\partial x^2}(x,y) &= -y^2ln((y^2)^{\beta_{2}})(x^2)^{\alpha_{22}}
\end{aligned}
\]
with $0<\alpha_{12},\alpha_{22},\beta_{1} < \alpha_{11}, \alpha_{21},\beta_2<1 $.
Set $\alpha = \max\{\alpha_{11},\alpha_{21},\beta_2\}$ and $\beta =
\min\{\alpha_{12},\alpha_{22},\beta_1\}$. The right-hand side again has the form \eqref{eq-special} and so the PDE has solutions by Proposition \ref{pro-pde}.
For the uniqueness, one considers the matrix
$F_{ij}$ at $(x,y)=(0,0)$, which is
\[
\left(
  \begin{array}{cccc}
    1 & 0 & 1 & 0 \\
    0 & 1 & 0 & 0 \\
  \end{array}
\right)
\]
and so the sub-matrix corresponding to columns $i_1=2$ and $i_2=3$ is invertible. But then with respect to
variables $\xi^{2}=x^2$ and $\xi^3=y^1$, $F^{ij}$ has modulus of continuity $w_2(s)=-sln(s^{\alpha})$ and in
general has modulus of continuity $w_1(s) = s^{\beta}$. But $w_1(s)$ and $w_2(s)$ satisfy condition
\eqref{eq-mcont4} and so, by Theorem \ref{thm-pde}, the PDE has unique solutions in a neighborhood of the origine \((0,0)\).
\end{ex}

These are only two particularly simple examples one can construct using Proposition \ref{pro-pde}. Here the forms of $H_i(y)$ are quite simple
 in the sense that $F_i(y) = \sum_{j} G_{ij}(y^j)$ and more complicated examples can
be achieved with more work.

\subsection{Unique Integrability of Continuous Bundles}\label{sec-introbundle}

We will derive the results above on existence and uniqueness of solutions for ODE's and PDE's from more
general and more geometric results about the integrability and unique integrability of tangent bundles on
manifolds. In this section we state Theorem \ref{thm-alt2} which can be seen as a mid step between the ODE and PDE theorems stated in the previous sections and the more general  results in  Theorem
\ref{thm-main} and the Main Theorem in the following sections.
Throughout this section we assume that $E$ is a continuous tangent sub-bundle on a manifold $M$.
First we need to generalize certain classical definitions of modulus of continuity to bundles.

\begin{defn}\label{def-transmod}
A bundle $ E$ of rank \(  m  \) is said to have modulus of continuity $w$, where $w$ is a continuous, increasing function $w: I \subset \mathbb{R}^+ \rightarrow \mathbb{R}^+$ such that $\lim_{s\rightarrow 0} w(s) =0$, if in every sufficiently small neighbourhood, it can be spanned by linearly independent vector fields
$X_1,...,X_m$ such that in local coordinates $|X_i(p) - X_i(q)| \leq w(|p-q|)$.
 $E$ is said to have \emph{transversal modulus of continuity} $w$ if for every $x_0 \in M$, there exists a coordinate neighbourhood $(V,(x^1,...,x^m,y^1,...,y^n))$ around $x_0$ so that $E$ is transverse to $\text{span}\{\frac{\partial}{\partial y^i}\}_{i=1}^n$ and with respect to coordinates $\{y^1,...,y^n\}$ $E$ has modulus of continuity $w(s)$.
\end{defn}

\begin{thm}\label{thm-alt2}
Let $E$ be a rank \(  n  \) bundle with modulus of continuity \(  w_{1}  \) and transversal modulus of continuity $w_2$. Assume $E$ is integrable and
\begin{equation}\label{eq-mcont5}
\lim_{s\rightarrow 0} w_1(s) e^{w_2(s)/s}=0
\end{equation}
Then $E$ is uniquely integrable.
\end{thm}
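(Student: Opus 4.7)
The plan is to reduce Theorem \ref{thm-alt2} to the PDE uniqueness statement, Theorem \ref{thm-pde}, by passing to a chart adapted to the transversality and writing integral manifolds as graphs. Unique integrability is a local statement about intersections, so I fix any point $p\in M$ where two local integral manifolds $N_1, N_2$ of $E$ meet. By the transversal modulus of continuity hypothesis, I select a chart $(V,(x^1,\ldots,x^m,y^1,\ldots,y^n))$ around $p$ in which $E$ is transverse to the vertical planes $\operatorname{span}\{\partial/\partial y^i\}_{i=1}^{n}$ and moreover admits a continuous local frame with modulus $w_2$ with respect to the $y$-variables. Transversality forces the existence of a unique ``graph frame'' of $E$ at each point $q\in V$, of the form
\[
Y_j(q) = \frac{\partial}{\partial x^j} + \sum_{i=1}^{n} F^{ij}(q)\,\frac{\partial}{\partial y^i}, \qquad j=1,\ldots,m,
\]
for certain scalar functions $F^{ij}(x,y)$ determined by $E$.

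The next step is to transfer both moduli of continuity from $E$ to these $F^{ij}$. Given any continuous frame $\{X_k\}$ for $E$ in the chart, transversality guarantees that the matrix of its ``$x$-components'' is uniformly invertible on a neighbourhood of $p$. Cramer's rule together with elementary closure properties of moduli of continuity (as collected in Proposition \ref{prop-moc}) shows that the change of basis from $\{X_k\}$ to $\{Y_j\}$ preserves any modulus of continuity satisfied by the frame. Consequently the $F^{ij}$ inherit modulus of continuity $w_1$ overall and modulus of continuity $w_2$ with respect to $y^1,\ldots,y^n$. By transversality and the inverse function theorem, any integral manifold of $E$ inside $V$ is locally a graph $y = y(x)$ near $p$, and the tangency condition $TN = E$ becomes exactly the PDE system
\[
\frac{\partial y^i}{\partial x^j}(x) = F^{ij}(x,y(x)), \qquad i=1,\ldots,n,\ j=1,\ldots,m,
\]
which is of the form \eqref{eq-pde1}.

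Theorem \ref{thm-pde} then applies directly: with the index set $I=(m+1,\ldots,m+n)$, the submatrix $\hat{F}^I$ is the $n\times n$ identity by construction, so $\det(\hat F^I)\equiv 1$, and the regularity just established for the $F^{ij}$ together with \eqref{eq-mcont5} is exactly the hypothesis \eqref{eq-mcont4}. The graphs representing $N_1$ and $N_2$, which both satisfy the PDE with the same initial condition corresponding to $p$, must therefore coincide on a neighbourhood of $p$; hence $N_1\cap N_2$ is relatively open in each, proving unique integrability. The main technical obstacle in this plan is the regularity transfer in the middle step: one must verify carefully that inverting the $x$-block of the frame matrix preserves simultaneously the coarse overall modulus $w_1$ and the finer $y$-only modulus $w_2$, which requires tracking which factor contributes which modulus under products, quotients, and compositions. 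Once this bookkeeping is in place, the rest is a clean dictionary between the geometric hypothesis on $E$ and the analytic PDE framework already treated.
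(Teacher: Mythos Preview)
Your argument is circular within the paper's logical structure. You reduce Theorem~\ref{thm-alt2} to Theorem~\ref{thm-pde}, but in the paper the implications run the other way: the Main Theorem yields Theorem~\ref{thm-main}, which yields Theorem~\ref{thm-alt2}, which is then used to prove Theorem~\ref{thm-pde} (the relevant subsection opens with ``Now we will prove that Theorem~\ref{thm-alt2} implies Theorem~\ref{thm-pde}''). Invoking Theorem~\ref{thm-pde} here therefore assumes the very statement you are trying to establish. Apart from this circularity your reduction is mathematically sound; the graph--PDE dictionary you set up is precisely the content of the paper's derivation of Theorem~\ref{thm-pde} from Theorem~\ref{thm-alt2}, just read in the reverse direction.

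The paper instead obtains Theorem~\ref{thm-alt2} directly from Theorem~\ref{thm-main}. The opening step coincides with yours: pass to an adapted chart, replace an arbitrary frame by the graph frame $X_i=\partial/\partial x^i+\sum_j a_{ij}\,\partial/\partial y^j$, and use Proposition~\ref{prop-moc} to check that the $a_{ij}$ inherit modulus $w_1$ overall and modulus $w_2$ in the $y$-variables---this is exactly your ``regularity transfer'' step. The divergence is in what comes next. Rather than appeal to a PDE uniqueness statement, the paper restricts each $X_i$ to its $(n{+}1)$-dimensional slice, sets $\eta_j=dy^j-a_{ij}\,dx^i$, and mollifies the coefficients: by Proposition~\ref{prop-moll} one gets $|\eta_j^k-\eta_j|_\infty$ controlled by $w_1$ and $|d\eta_j^k|_\infty$ controlled by $w_2(s)/s$, so condition~\eqref{eq-mcont5} is exactly what forces $|\eta_j^k-\eta_j|_\infty\,e^{|d\eta_j^k|_\infty}\to 0$, i.e.\ strong exterior regularity. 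Theorem~\ref{thm-main} then gives unique integrability of each $X_i$, and since $E$ is assumed integrable, unique integrability of $E$ follows.
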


The scope of Theorem \ref{thm-alt2} is possibly limited because integrability is assumed. However we have stated it here because it gives uniqueness as a result of a natural regularity condition and in general  existence is a highly non trivial property which cannot be reduced to any regularity condition. We will show that Theorem \ref{thm-alt2} is a corollary of the more general results below which address the problem of existence as well as uniqueness, and that it easily implies Theorems \ref{thm-ode} and \ref{thm-pde}.

\subsection{Asymptotic involutivity and exterior regularity}
We now formulate the special case of our most general result, addressing the problem of existence and uniqueness of integral manifolds for continuous tangent bundle distributions. Since integrability of tangent bundles is a local property, we assume from now on that $U$ is a  Euclidean ball in $\mathbb{R}^{n+m}$ and \( E \) is a continuous tangent bundle distribution  of rank \( n \) on \( U \). We let $|\cdot|$ denote the (induced) Euclidean norm on sections of the tangent bundle and of $k$-differential forms for all $0 \leq k \leq n+m$. $|\cdot|_{\infty}$ denotes the sup-norm over $U$, which gives the aforementioned sections a Banach space structure. We also employ , point-wise, tangent vectors with the induced Euclidean metric. Letting \( \mathcal A^{1}(E) \) denote the space of all 1-forms \( \eta \) defined on \( U \) with \( E\subset \ker (\eta) \),  the distribution \( E \) is completely described by any set $\{\eta_{i}\}_{i=1}^n$  of \( n \) linearly independent 1-forms in, i.e. any basis of, \( \mathcal A^{1}(E) \). If the distribution \( E \) is differentiable, the forms $\{\eta_{i}\}_{i=1}^n$ can also be chosen differentiable and the classical Frobenius theorem \cite{Cle66, Dea40, Fro77} states that $E$
is \emph{uniquely integrable} if, for any basis of differentiable \(1\)-forms $\{\eta_{i}\}_{i=1}^n$ of $\mathcal{A}^1(E)$,  the \emph{involutivity condition}
\begin{equation}\label{eq-Fro}
|\eta_{1} \wedge\cdots\wedge \eta_{n}\wedge d\eta_{i}(p)|=0
\end{equation}
holds for all $i=1,...,n$ and $p \in U$. Several generalizations of this Theorem exist in the literature, including results which weaken the differentiability assumption, we mention for example results by Hartman \cite{Har02}, Simic \cite{Sim96} and other authors \cite{MonMor13, Ram07}, but which still essentially use the fact that the exterior derivative \( d\eta_{i} \) exists, for example if \( E \) is Lipschitz then the \( \eta_{i} \) are differentiable almost everyhere and \( d\eta_{i} \) exists almost everywhere, and therefore such results can be formulated in essentially the same way as Frobenius, using condition \eqref{eq-Fro}.

One of the first stumbling blocks in obtaining some integrability criteria for general \emph{continuous} distributions is that the exterior derivatives of the forms  $\{\eta_{i}\}_{i=1}^n$ which define \( E \) do not in general exist and it is thus not even possible to state condition~\eqref{eq-Fro}. Our strategy for resolving this issue is to consider a sequence $\{\eta_{i}^{k}\}_{i=1}^n$ of \( C^{1} \) differential forms, for which therefore the exterior derivatives \( d\eta_{i}^{k} \)'s do exist,  which converge to $\{\eta_{i}\}_{i=1}^n$ and satisfy certain conditions which we define precisely below and which imply that the sequence is in some sense ``asymptotically involutive'' and which will allow us to deduce that \( E \) is integrable without having to define an involutivity condition directly on \( E \). A quite interesting by-product of this approach is a clear distinction between the involutivity conditions required for integrability and the regularity conditions required for unique integrability.
In the \( C^{1} \) case these regularity conditions are automatically satisfied and thus the involutivity condition~\eqref{eq-Fro} is sufficient to guarantee both integrability and unique integrability.

\begin{defn}\label{defn-asyminv}A continuous tangent sub-bundle \(E\) of rank \(n\)  is \emph{strongly asymptotically involutive}
if there exist a basis $\{{\eta}_i\}_{i=1}^n$ of $\mathcal{A}^1(E)$, a constant  $\epsilon_0>0$, and a sequence of $C^1$ differential
1-forms $\{\eta^k_i\}_{i=1}^n$ such that $\max_i|\eta^k_i - \eta_i|_{\infty}\rightarrow 0$ as $k \rightarrow \infty$ and
\begin{equation}\label{eq-asyinv}
\max_j|\eta^k_1 \wedge ... \eta^k_n \wedge d\eta^k_j|_{\infty} e^{\epsilon_0\max_i|d{\eta}^k_i|_{\infty}} \rightarrow 0 \quad
\text{as $k \rightarrow \infty$}.
\end{equation}
\end{defn}

\begin{defn}
A continuous tangent sub-bundle \(E\) of rank \(n\)  is  \emph{strongly exterior regular} if there exist a basis $\{{\beta}_i\}_{i=1}^n$ of $\mathcal{A}^1(E)$, a constant $\epsilon_1>0$, and a sequence of $C^1$ differential
1-forms $\{\beta^k_i\}_{i=1}^n$ such that
\begin{equation}\label{eq-extreg}
\max_j|\beta^k_j - \beta_j|_{\infty} e^{\epsilon_1\max_{i}|{d\beta}^k_i|_{\infty}}
\rightarrow 0 \quad \text{as $k\rightarrow\infty$}.
\end{equation}
\end{defn}

We note that we refer to these conditions as ``strong'' since we will define some more general versions below.

\begin{thm}\label{thm-main} Let $E$ be a continuous tangent subbundle.
If \(E\) is strongly asymptotically involutive then it is integrable.
If \(E\) is integrable and strongly exterior regular then it is uniquely integrable.
\end{thm}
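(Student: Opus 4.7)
The approach is to prove each implication separately using the $C^1$ approximating forms supplied by Definitions \ref{defn-asyminv} and the one preceding Theorem \ref{thm-main}, combined with a Gronwall-type estimate whose growth rate is exactly the one absorbed by the exponential weighting in \eqref{eq-asyinv} and \eqref{eq-extreg}. Since both assertions are local, I would fix $p_0\in U$, pass to a small coordinate ball, and arrange by a linear change of coordinates that $E_{p_0}$ is transverse to the vertical subspace spanned by some $m$ coordinate directions; the remaining $n$ directions can then be used to parametrize candidate integral manifolds as graphs.

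For the integrability part, I would construct a candidate integral leaf through $p_0$ for each large $k$ as the image of a composition of flows. Choose a $C^1$ frame $X_1^k,\ldots,X_n^k$ spanning $E^k := \bigcap_i \ker \eta_i^k$ (available since $\eta_i^k\to\eta_i$ uniformly and $E$ has constant rank) and set
\[
\phi^k(s_1,\ldots,s_n) = \Phi^{X_n^k}_{s_n}\circ\cdots\circ\Phi^{X_1^k}_{s_1}(p_0)
\]
on a fixed cube of side length $\epsilon_0$. Measure the failure of $d\phi^k$ to land in $E^k$ by the quantities $u^k_{ij}(s) := \eta_j^k(\partial_{s_i}\phi^k(s))$. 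The identity $d\eta^k_j(X,Y) = X\eta^k_j(Y) - Y\eta^k_j(X) - \eta^k_j([X,Y])$ applied to the pushforward derivatives, combined with the decomposition of $d\eta^k_j(X^k_\ell,\cdot)$ into a tangential piece (bounded by $|\eta_1^k\wedge\cdots\wedge\eta_n^k\wedge d\eta_j^k|_\infty$) and a transverse piece (bounded by $|d\eta^k|_\infty\,|u^k|$), produces a linear ODE for $u^k$ with $u^k(0)=0$. Gronwall then yields
\[
|u^k(s)| \le C\max_j|\eta_1^k\wedge\cdots\wedge\eta_n^k\wedge d\eta_j^k|_\infty\;e^{\epsilon_0\max_i|d\eta_i^k|_\infty},
\]
which tends to $0$ by \eqref{eq-asyinv}. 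The $\phi^k$ are equicontinuous on the cube, so Arzelà--Ascoli extracts a $C^0$-convergent subsequence; the vanishing of $u^k$ together with the uniform convergence $\eta_i^k\to\eta_i$ then shows the limit is an integral manifold of $E$ through $p_0$.

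For the uniqueness part, let $N_1,N_2$ be integral manifolds of $E$ through $p_0$, written in the chart as graphs $y=f_a(x)$ with $f_a(0)=0$, and set $\Delta := f_2 - f_1$. Because $\beta_i$ annihilates $TN_a = E$, the pullbacks $\beta_i^k|_{N_a}$ have sup-norm bounded by $|\beta_i^k-\beta_i|_\infty$. For each $x$, build a $2$-chain $\Sigma_x$ whose boundary consists of curves in $N_1$ and $N_2$ joining $p_0$ to $(x,f_1(x))$ and $(x,f_2(x))$, closed by the vertical segment between these endpoints. Stokes' theorem applied to $\beta_i^k$ turns the vertical-segment integral (which, after normalizing $\beta_i$ to be the dual of the vertical directions at $p_0$, has leading order $\Delta^i(x)$) into a $d\beta_i^k$-integral over $\Sigma_x$ plus curve integrals bounded by $|\beta^k-\beta|_\infty$. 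Since $\mathrm{area}(\Sigma_x)$ is linear in $|\Delta|_\infty$, iterating this bound on shrinking shells yields a Gronwall-type estimate
\[
|\Delta(x)| \le C|\beta^k-\beta|_\infty\;e^{\epsilon_1|d\beta^k|_\infty},
\]
whose right-hand side vanishes by \eqref{eq-extreg}, forcing $\Delta\equiv 0$.

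The main obstacle I anticipate is making the Gronwall loop in the integrability step quantitatively honest: the $C^1$ frame $X_i^k$ spanning $\ker \eta_i^k$ must be chosen so that the constants in the ODE for $u^k$ depend only on $|d\eta^k|_\infty$ and not, say, on $|\nabla X^k|_\infty$, which a priori could be much worse; matching the flow-box radius with the specific $\epsilon_0$ of \eqref{eq-asyinv} also requires care. A secondary subtlety is verifying that $d\phi^k$ retains full rank in the limit, so that the limiting object is a genuine $n$-dimensional integral manifold rather than a collapsed one; this can be checked by monitoring a tangential Jacobian determinant throughout the Gronwall estimate and noting that it remains close to $1$.
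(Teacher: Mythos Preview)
Your proposal is correct in outline, and the integrability half matches the paper's own argument closely: both build the candidate leaf as a composition of flows of a $C^1$ frame for $E^k$, both measure the defect by evaluating the annihilating forms on the pushed-forward tangent vectors, and both control this via a Gronwall step whose rate is $|d\eta^k|_\infty$ rather than $|\nabla X^k|_\infty$. The paper packages the key identity as a Stokes computation (its Lemma~\ref{lem-push}), giving $\tfrac{d}{dt}\eta_j^k(De^{tX_\ell^k}V)=d\eta_j^k(X_\ell^k,De^{tX_\ell^k}V)$ and then applying it with $V=[X_s^k,X_r^k]\in\mathcal Y$; your Cartan-formula derivation produces exactly the same ODE once you use $\eta_j^k(X_\ell^k)\equiv0$, so the obstacle you flag about avoiding $|\nabla X^k|$ is already dissolved by your own setup. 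The decomposition $\partial_{s_i}\phi^k=X_i^k+Y_i$ with $Y_i\in\mathcal Y$ and $|Y_i|\sim|u^k|$ (which holds because the special frame has $[X_i^k,X_j^k]\in\mathcal Y$ and the flows preserve $\mathcal Y$) is precisely what makes the tangential/transverse split honest. The limiting step is the same Arzel\`a--Ascoli argument as the paper's Proposition~\ref{prop-convergence}.

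Your uniqueness argument, however, is genuinely different from the paper's. The paper first reduces to unique integrability of each spanning vector field $X_i$ on an $(n{+}1)$-dimensional slice, then \emph{straightens} the flow of $X_i^k$ to manufacture \emph{closed} $1$-forms $\alpha^k=(\psi_k^{-1})^*dz^j$; Stokes then leaves only boundary terms, and the exterior regularity enters through the pushforward bound (Proposition~\ref{prop-boundedflow}) on $|\alpha^k(X^k-X)|$. You instead stay with the full integral manifolds and the non-closed $\beta^k$, picking up a surface term $\int_{\Sigma_x}d\beta^k$ whose area is $\lesssim |x|\int_0^1|\Delta(tx)|\,dt$. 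This yields the Volterra inequality $G(r)\le C|\beta^k-\beta|_\infty\,r+C|d\beta^k|_\infty\int_0^rG$, and the integral Gronwall lemma gives $G(r)\le C|\beta^k-\beta|_\infty\,e^{C|d\beta^k|_\infty r}$, which vanishes by \eqref{eq-extreg} for $r\le\epsilon_1/C$. Your route is more direct---no dimension reduction, no flow-box construction---at the price of carrying the $d\beta^k$ term through a genuine Gronwall estimate. One wording caution: ``iterating on shrinking shells'' suggests a discrete bootstrap, which by itself only gives uniqueness on balls of radius $\sim|d\beta^k|_\infty^{-1}$; what actually produces your displayed exponential bound is the continuous integral Gronwall inequality applied to $G(r)$.
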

Notice that if \(E\) is \( C^{1} \), the strong exterior regularity is trivially satisfied by choosing \(\beta^{k}_{i}=\eta_{i}\) and  the Frobenius involutivity condition \eqref{eq-Fro} is equivalent to the strong asymptotic involutivity condition \eqref{eq-asyinv} by choosing \(\eta^{k}_{i}=\eta_{i}\).  We remark that a version of Theorem \ref{thm-main} in dimension \(  \leq 3  \) was obtained in \cite{LuzTurWar16} by using different arguments.

\begin{rem}
One can also combine the asymptotic involutivity condition of Theorem \ref{thm-main}, which gives integrability, with the condition on the modulus of continuity of \( E \) in Theorem \ref{thm-alt2} which then gives uniqueness (indeed, we will show below that the condition of Theorem \ref{thm-alt2} implies exterior regularity) as this last condition may be easier to check in some situations.  As we discuss in more details below, conditions such as those of asymptotic involutivity and exterior regularity, which are based on a sequence of approximations, are actually quite natural. It would also be interesting however to know whether there is any way to formulate the existence conditions without recourse to approximations, directly in terms of properties of the bundle E (or $\mathcal{A}^1(E)$ to be more precise).  \end{rem}

\begin{que}Can the strong asymptotic involutivity condition in Theorem \ref{thm-main} be replaced by a condition that can be stated only in terms of geometric and analytic properties of the bundle $E$ rather than a sequence of approximations? \end{que}

An answer to this question would be a natural form of Peano's Theorem in higher dimensions.

\subsection{The Main Theorem}\label{sec-mainthm}
 In this section we state our most general theorem, which contains Theorem \ref{thm-main} as a special case and also implies all the other results stated above. This more general result will be important for applications to the tangent bundles which arise in the context of Dynamical Systems.

Let \(U\subset\mathbb{R}^{m+n}\). Given two tangent bundles $E^1$ and $E^2$ on $U$, for $x\in U$, we denote by $\angle(E^1_x,E^2_x)$  the maximum angle between all possible  rays  \(  R_{1}\subset E^{1}, R_{2}\subset E^{2}  \)  orthogonal to \(  E^{1}_{x}\cap E^{2}_{x}  \)  (with respect to the induced point-wise metric at $x$). A sequence of bundles $\{E^k\}$  is said to converge to $E^1$ in angle if $\sup_{x\in U} \angle(E^1_x,E^k_x)\to 0$ as $k\to\infty$.
This also means that the Haussdorff distance between the unit spheres inside $E^k_x$ and $E^1_x$ goes to zero for all $x$.

Now assume we are given \(E\) a continuous tangent bundle of rank \(n\) on \(U\). 
We choose a coordinate system $(x^1,...,x^m,y^1,...,y^n)$ in \(U\) so that the $y^i$ coordinates are transverse to $E$ and  if $E^k$  is any sequence of bundles of rank \(n\) which converge in angle to $E$, then we can assume without loss of generality that they are also transverse to the \(y^{i}\) coordinates. We denote the subspace
$$
\mathcal{Y}_p:=\text{span}\left\{\frac{\partial}{\partial y^1},...,\frac{\partial}{\partial y^n}\right\}|_p.
$$
By the transversality condition, we can span $E$ by vectors of the form
$$
X_i = \frac{\partial}{\partial x^i} + \sum_{j=1}^n a_{ij}(x,y)\frac{\partial}{\partial y^j}
$$
for some $C^0$ functions $a_{ij}(x,y)$,
and if  $E^k$ is a sequence of \(C^{1}\) bundles converging to \(E\) then each \(E^{k}\) can be spanned  by vectors of the form
$$
X^k_i = \frac{\partial}{\partial x^i} + \sum_{j=1}^n a^k_{ij}(x,y)\frac{\partial}{\partial y^j}
$$
 for some sequence of $C^1$  functions $a^k_{ij}(x,y)$ so that $X^k_i$ converges to $X_i$ as $k\rightarrow \infty$.

Note that a basis of sections $\{\alpha^{k}_{1},...,\alpha^{k}_{n}\}$ of $\mathcal{A}^1(E^k)$ defined
on $U$, gives a non-vanishing section of the frame bundle $F(\mathcal{A}^1(E^k))$ of $\mathcal{A}^1(E^k)$.  We denote this section by $A^{k}$, which in local coordinates is the matrix of 1-forms whose $j^{th}$ row is $\alpha^{k}_j$. More explicitly if evaluated at a point $p$ it is the map
$A^{k}_p : \mathbb{R}^{m+n} \rightarrow \mathbb{R}^n$ defined by
$$
A^{k}_p(v):= (\alpha^{k}_{1}(v),...,\alpha^{k}_n(v))|_p.
$$
Sometimes if we evaluate it along a curve $\gamma$ then we denote $A^{k}_s = A^{k}_{\gamma(s)}$. By our assumptions $A^{k}_p$ has rank $n$ (since it has $n$ rows made from a linearly independent set of 1-forms) and $ker(A^{k}_p) =E^k_p$. Therefore restricted to $\mathcal{Y}_p$ which is transverse to $E^k_p$
these maps are invertible and we write
\[
A^{-k}_p:=(A^{k}|_{\mathcal Y_{p}})^{-1}.
\]
In the statement and proof of the theorem, we will use a sequence of open covers $\{U^{k,i}\}_{i=1}^{s_{k}}$ of \(U\) associated to sequence \(E^{k}\) of approximating bundles and a corresponding sequence of sections $\{A^{k,i}\}_{i=1}^{s_{k}}$ defined on the elements of these covers. We will use the notation  \(
A^{-k,j}_p:=(A^{k,j}|_{\mathcal Y_{p}})^{-1}. \)
Since the elements of these covers overlap we will need the following compatibility condition.

\begin{defn} A finite open cover $\{U^{k,i}\}_{i=1}^{s_{k}}$ of \(U\) is a \textit{compatible cover} for non-vanishing sections $\{A^{k,i}\}_{i=1}^{s_{k}}$ of the frame bundle $F(\mathcal{A}^1(E^{k}))$ defined on \(U^{k,i}\) if
for all \(i,j=1,...,s_{k}\),  $p \in U^k_i \cap U^k_j$ we have
 $$||A^{k,i}_p \circ A^{-k,j}_p||=1.$$
\end{defn}
Given a compatible cover, we also define the maps $dA^{k,i}:\mathbb{R}^{2(n+m)} \rightarrow \mathbb{R}^n$ by
$$
dA^{k,i}_p(u,v) = (d\alpha^{k,i}_{1}(u,v),...,d\alpha^{k,i}_n(u,v))|_p
$$
for $u,v \in \mathbb{R}^{n+m}$. We denote by $dA^{k,i}|_{E^k_p}$ the restriction of this map to $E^k_p\times E^{k}_p$.
We also define the following constant depending on $k$ and \(i\)
\begin{equation}\label{def-Mk}
M^{k,i}_{A}:= \sup_{\substack{v \in E, w \in \mathbb{R}^{n} \\ |v|=|w|=1 \\ p \in U^{k,i} }} |dA^{k,i}_p(A^{-k,i}_pw,v)|.
\end{equation}

\begin{defn}\label{defn-asyminv2} A continuous tangent subbundle $E$ on $U \subset \mathbb{R}^{n+m}$  is \textit{ asymptotically involutive} if there is a sequence of $C^1$ subbundles $E^k$ that converge to $E$, \( \epsilon>0 \) and, for all~$k$, a compatible open cover $\{U^{k,i}\}_{i=1}^{s_k}$ of $U$
with non-vanishing sections $\{A^{k,i}\}_{i=1}^{s_{k}}$ of $F(\mathcal{A}^1(E^k))$ defined on $U^{k,i}$ such that
$$\max_{\substack{i,j,\ell\in \{1,...,s_k\}}}||dA^{k,i}|_{E^k}||_{\infty} \ ||A^{-k,j}||_{\infty} \ e^{\epsilon  M^{k,\ell}_{A}}\rightarrow 0 \ \text{ as } k \rightarrow 0.
$$
\end{defn}

\begin{defn}\label{defn-extreg2} A continuous tangent subbundle $E$ on $U \subset \mathbb{R}^{n+m}$  is \textit{exterior regular} if there is a sequence of $C^1$ bundles $E^k$ that converge to $E$,   \( \epsilon>0 \) and, for all $k$ a compatible open cover $\{U^{k,i}\}_{i=1}^{s_k}$ of $U$
and non-vanishing sections $B^{k,i}$ of $F(\mathcal{A}^1(E^k))$ defined on $U^{k,i}$ such that
 $$\max_{\substack{i,j,\ell \in \{1,...,s_k\} }}\|B^{k,i}|_{E}\|_{\infty} \ ||B^{-k,j}||_{\infty} \ e^{\epsilon M^{k,\ell}_{B}}\rightarrow 0 \ \text{as } k \ \rightarrow 0.
 $$
\end{defn}
\begin{mainthm}\label{thm-main2}
Let \( E \) be a continuous tangent subbundle. If $E$ is asymptotically involutive then $E$ is integrable. If $E$ is integrable and exterior regular then it is uniquely integrable
\end{mainthm}

\begin{rem}
The proof of Theorem~\ref{thm-main} consists of verifying that the strong asymptotic involutivity and strong exterior regularity conditions are simply  special cases of their more general versions given here. There are two main differences which make the  general versions more general, and more applicable, than the strong versions. The first is that in the general versions of asymptotic involutivity and exterior regularity the forms defining the sub-bundles are only defined locally.  The second, more important, difference is that in  the the strong versions, the differential forms $\{\eta^k_1,...,\eta^k_n\}$ are assumed to converge to a set of linearly independent
forms, whereas this is not required by the general version.
Indeed, multiplying a form by a constant or even by a function, does not change its kernel and thus does not change the bundle that it defines, and what one really needs is the convergence of a sequence of approximating bundles not necessarily the forms defining these bundles. Thus assuming the convergence of the forms, while allowing for a tidier formulation of the conditions, is an unnecessary restriction.
This more general formulation allows us in particular to obtain an application to dynamical systems, including  the  well known Stable Manifold Theorem, which would not follow from Theorem \ref{thm-main}.

\end{rem}
\subsection{Stable Manifold Theorem}
A rich supply of continuous, integrable and non-integrable distributions come from dynamical systems where some dynamically defined tangent bundles occur naturally. The integrability (or not)  of these subbundles has implications for the study of statistical and topological properties of such systems \cite{BonWil, Ham, HamPot} and there is a rich literature going back to Hadamard and Perron \cite{Had01, Per29, Per30} concerning techniques for studying the problem, see also \cite{BriPes74,  HirPugShu77, Irw70, Pes76} for classical  results going back to the 1970's and \cite{BriBurIva, PotHam, LuzTurWar15b, Pes, HerHerUre2} for an overview of recent approaches. We give here a fairly general class of dynamical systems, which in particular includes classical uniform hyperbolic systems and certain partially hyperbolic systems, for which the assumptions of the Main Theorem can be readily verified. This gives a unification of many results, which have so far been proved by a variety of techniques, as a direct corollary of a single abstract Frobenius type integrability
result.

Throughout this section $M$ denotes an \((n+m)\)-dimensional compact manifold  and $\phi: M\to M$ denotes a $C^2$ diffeomorphism.
The diffeomorphism $\phi$ is said to admit a \textit{dominated splitting} if there exists a \(D\phi\)-invariant continuous decomposition $E\oplus
F$ of $TM$  such that
\begin{equation}\label{eq-dominated}
\sup_{x\in M}\|D\phi_{x}|_{E_{x}}\|<\inf_{x\in M}m(D\phi_{x}|_{F_{x}}).
\end{equation}
Here $m(\cdot)$ denotes the conorm of an operator, that is $m(D\phi|_F(x))=\inf_{v\in F(x)}\frac{|D\phi v|}{|v|}$.

Note that \eqref{eq-dominated} is a purely dynamical condition and there is no a priori reason why such condition, or any other similar dynamical condition, should have a bearing on the question of integrability. However,
 remarkably, stronger domination conditions such as uniform hyperbolicity, where \(\|D\phi|_{E}\|<1<m(D\phi|_{F})\), do imply integrability of both subbundles \cite{HirPugShu77}, though there are counterexamples which show that weaker dominated splittings as in \eqref{eq-dominated} do not \cite{Sma67, Wil98} and also that systems with  dominated splitting may be integrable but not uniquely \cite{HerHerUre}.
We give here a sufficient condition for unique integrability for a class of systems with dominated splitting which contains the uniformly hyperbolic diffeomorphisms but significantly relaxes the contraction of the subbundle \(E\) to allow for neutral  or mildly expanding behavior (including, for example, the time one map of uniformly hyperbolic flows).

\begin{defn}\label{defn-linearly}
$E$ is called at most \textit{linearly growing} for \(\phi\) if there exists constants $C,D$ such that $|D\phi^k|_{E(x)}|\leq kC + D$ for all $x \in M$ and $k\geq0$.
\end{defn}

\begin{thm}\label{thm-dyn}
Let \(\phi: M\to M\) be a \(C^{2}\) diffeomorphism with an invariant dominated splitting \(E\oplus F\).
If $E$ is at most linearly growing  then \(E\) is uniquely integrable.
\end{thm}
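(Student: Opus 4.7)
The strategy is to apply the Main Theorem by constructing a sequence $E^k$ of $C^1$ integrable subbundles converging to $E$ via backward iteration of a smooth reference distribution. Asymptotic involutivity will then be automatic, and exterior regularity will follow from balancing the exponential contraction of $\angle(E^k,E)$ against an exponential factor controlled by the at-most-linear growth of $D\phi^k|_E$.

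Since integrability is a local property, I work in a coordinate chart $U\subset\mathbb R^{m+n}$ around an arbitrary point. Fix a $C^\infty$ integrable rank-$n$ distribution $\tilde E$ defined on a neighbourhood of the forward $\phi$-orbit of $U$ (locally, the span of $n$ coordinate vector fields works) that is transverse to $F$, and define
$$
E^k_x:=D\phi^{-k}_{\phi^k x}(\tilde E_{\phi^k x}).
$$
Each $E^k$ is a $C^1$ integrable subbundle of $U$, since $\phi$ is $C^2$ and $\phi^{-k}$ carries integrable distributions to integrable ones. Setting $\mu:=\sup_x\|D\phi|_{E_x}\|/\inf_x m(D\phi|_{F_x})<1$ and decomposing vectors in $\tilde E_{\phi^k x}$ along the invariant splitting $E\oplus F$, the domination condition yields $\angle(E^k_x,E_x)\le C\mu^k$, so $E^k\to E$ uniformly in angle at geometric rate.

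For the Main Theorem, let $\tilde A$ be a $C^1$ frame for $\mathcal A^1(\tilde E)$ and, on a compatible refinement $\{U^{k,i}\}$, let $A^{k,i}$ be a suitable normalization of $(\phi^k)^*\tilde A$. Since $\tilde E$ is integrable, $d\tilde A|_{\tilde E}=0$, hence $dA^{k,i}|_{E^k}=(\phi^k)^*(d\tilde A|_{\tilde E})=0$ and the asymptotic involutivity condition of Definition~\ref{defn-asyminv2} holds trivially. For exterior regularity, choose $B^{k,i}$ normalized so that $\|B^{k,i}\|_\infty$ is uniformly bounded; then $\|B^{k,i}|_E\|_\infty\le C\mu^k$ by the angle estimate, $\|B^{-k,j}\|_\infty$ is uniformly bounded (as $\mathcal Y$ is transverse to $E$ and, for $k$ large, to $E^k$), and $M^{k,\ell}_B\le C_1 k+C_2$ as discussed below. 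Choosing $\epsilon>0$ with $\mu\,e^{\epsilon C_1}<1$ then gives
$$
\|B^{k,i}|_E\|_\infty\,\|B^{-k,j}\|_\infty\,e^{\epsilon M^{k,\ell}_B}\;\le\; C'(\mu e^{\epsilon C_1})^k\longrightarrow 0,
$$
and the Main Theorem delivers unique integrability of $E$.

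The principal obstacle is the linear bound $M^{k,\ell}_B=O(k)$. The quantity $M^{k,\ell}_B$ evaluates $dB^{k,\ell}(B^{-k,\ell}w,v)$ with $v\in E$, so after applying the chain rule to $B^{k,\ell}=(\phi^k)^*\tilde A/N_k$ each resulting term contains a factor $D\phi^k$ (or a product coming from the cocycle identity $D^2\phi^k=\sum_{j=0}^{k-1}D\phi^{k-j-1}\!\cdot\!D^2\phi\cdot(D\phi^j\otimes D\phi^j)$) evaluated on $v\in E$; the hypothesis $\|D\phi^k|_{E}\|\le Ck+D$ controls these, while the $F$-direction factors coming from the $B^{-k,\ell}w$ argument are absorbed into the normalization $N_k$. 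The delicate point is arranging simultaneously the normalization, the compatibility condition $\|B^{k,i}\circ B^{-k,j}\|=1$ on overlaps, and the uniform bounds, but this is a routine technical task once the pullback construction is fixed.
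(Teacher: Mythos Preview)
Your overall strategy --- pull back a smooth reference distribution by $\phi^k$ and feed the resulting sequence into the Main Theorem --- is exactly the paper's. But two choices you make create real problems that the paper's argument avoids.

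\medskip
\textbf{Don't ask for $\tilde E$ integrable.} You need $\tilde E$ defined on $\bigcup_{k\ge 0}\phi^k(U)$, which in general forces a global smooth integrable rank-$n$ distribution transverse to $F$; there is no reason such a thing exists on $M$. The paper simply takes any $C^1$ bundle $E^0$ transverse to $F$ (e.g.\ a smoothing of $F^\perp$), with orthonormal local frames $C^j$ of $\mathcal A^1(E^0)$, and \emph{does not} assume $E^0$ integrable. Asymptotic involutivity is then obtained from the same pullback estimates as exterior regularity, not for free; the extra factor $\|dA^{k,i}|_{E^k}\|\le C\|D\phi^k|_{E^k}\|^2$ is harmless once divided by $m(D\phi^k|_F)$.

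\medskip
\textbf{Don't normalize, and don't differentiate $\phi^k$ twice.} This is the genuine gap. Your sketch of $M^{k,\ell}_B=O(k)$ invokes the cocycle expansion of $D^2\phi^k$, but the exterior derivative is natural under pullback: setting $A^{k,i}:=(\phi^k)^*C^{\ell_i}$ (no division by $N_k$), one has $dA^{k,i}=(\phi^k)^*dC^{\ell_i}$ with \emph{no second derivatives of $\phi$} appearing. Then, since $A^{-k,i}=(D\phi^k)^{-1}\circ (C^{\ell_i}|_{\phi^k_*\mathcal Y})^{-1}$,
\[
dA^{k,i}(A^{-k,i}w,v)=dC^{\ell_i}\bigl((C^{\ell_i}|_{\phi^k_*\mathcal Y})^{-1}w,\;D\phi^k v\bigr),
\]
and the only growing factor is $D\phi^k v$ with $v\in E^k$, giving $M^{k,i}_A\le C\|D\phi^k|_{E^k}\|\le C(Ck+D)$ directly from the linear-growth hypothesis. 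Your normalization by $N_k$ destroys this cancellation: $d(N_k^{-1})$ reintroduces derivatives of $\|D\phi^k\|$, and the claim that ``the $F$-direction factors \ldots are absorbed into $N_k$'' is not substantiated. The paper instead leaves $A^{k,i}$ unnormalized, so that $\|A^{-k,i}\|\le C/m(D\phi^k|_F)$ is \emph{small} (not merely bounded), and it is this smallness that drives both the asymptotic involutivity and the exterior regularity products to zero. Compatibility of the cover then follows cleanly from orthonormality of the $C^j$.

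\medskip
In short: drop the integrability of $\tilde E$, drop the normalization, and replace the $D^2\phi^k$ computation by the one-line identity $d\circ(\phi^k)^*=(\phi^k)^*\circ d$. With those changes your proof becomes the paper's.
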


A particular case of diffeomorphisms with dominated splitting are  \textit{partially hyperbolic systems}, which have a \(D\phi\)-invariant splitting \(E^{s}\oplus E^{c}\oplus E^{u}\) where
\[
\|D\phi|_{E^{s}}\|<1<m(D\phi|_{E^{u}}) \ \text{ and } \
\|D\phi|_{E^{s}}\|<m(D\phi|_{E^{c}})\leq\|D\phi|_{E^{c}}\|<m(D\phi|_{E^{u}}).
\]
\begin{cor}\label{thm-part}
Let \(\phi: M\to M\) be a \(C^{2}\) partially hyperbolic diffeomorphism then if \(E^{c}\) grows at most linearly for \(\phi\) and \(\phi^{-1}\) then it is uniquely integrable.
\end{cor}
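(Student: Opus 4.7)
The plan is to apply Theorem~\ref{thm-dyn} twice, first to $\phi$ to obtain unique integrability of the center-stable bundle $E^{cs}:=E^{s}\oplus E^{c}$, and then to $\phi^{-1}$ to obtain unique integrability of the center-unstable bundle $E^{cu}:=E^{c}\oplus E^{u}$. The center foliation is then constructed by intersecting the two resulting foliations, and unique integrability of $E^{c}$ is deduced from that of $E^{cs}$ and $E^{cu}$.

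For the first application, the partial hyperbolicity hypotheses together with uniform transversality of the splitting (passing, if necessary, to an adapted Mather-type metric or to a suitable iterate of $\phi$) imply that $E^{cs}\oplus E^{u}$ is a dominated splitting for $\phi$. To verify the linear-growth hypothesis of Theorem~\ref{thm-dyn} for $E^{cs}$, I decompose $v=v^{s}+v^{c}\in E^{cs}$: the uniform exponential contraction on $E^{s}$ bounds $|D\phi^{k}v^{s}|$ uniformly in $k$, while the linear-growth hypothesis on $E^{c}$ yields $|D\phi^{k}v^{c}|\leq(Ck+D)|v^{c}|$; the uniform lower bound on the angle between $E^{s}$ and $E^{c}$ then gives $\|D\phi^{k}|_{E^{cs}}\|\leq C'k+D'$. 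Theorem~\ref{thm-dyn} therefore produces a foliation $\mathcal{F}^{cs}$ with $C^{1}$ leaves that uniquely integrates $E^{cs}$. Applying the same argument to $\phi^{-1}$ (for which $E^{u}$ is exponentially contracting and, by the hypothesis of the corollary, $E^{c}$ is still linearly growing) yields a foliation $\mathcal{F}^{cu}$ that uniquely integrates $E^{cu}$.

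For each $p\in M$, define $W^{c}(p):=\mathcal{F}^{cs}(p)\cap\mathcal{F}^{cu}(p)$ in a small neighbourhood of $p$. Since $E^{cs}_{p}+E^{cu}_{p}=T_{p}M$, the two $C^{1}$ leaves meet transversally at $p$, so their intersection is a $C^{1}$ submanifold of dimension $\dim E^{c}$ with $T_{p}W^{c}(p)=E^{cs}_{p}\cap E^{cu}_{p}=E^{c}_{p}$, and the analogous identity at each nearby point gives integrability of $E^{c}$. For uniqueness, let $N$ be any local integral manifold of $E^{c}$ through $p$: since $TN\subset E^{cs}$, the unique integrability of $E^{cs}$ forces $N$ to lie locally in $\mathcal{F}^{cs}(p)$, and symmetrically in $\mathcal{F}^{cu}(p)$, whence $N\subset W^{c}(p)$ locally.

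The step I expect to require the most care is the intersection argument, namely showing that $W^{c}(p)$ is genuinely a $C^{1}$ submanifold whose tangent space agrees with $E^{c}$ at \emph{every} point of a neighbourhood of $p$, not just at $p$ itself. Transversality at $p$ is immediate from the direct-sum decomposition, but propagating it requires locally straightening each foliation using its $C^{1}$ leaves, combined with a continuity argument controlling the angle between leaves and the transverse variation, in order to conclude that the intersection is a single connected $C^{1}$ submanifold tangent to $E^{c}$ throughout.
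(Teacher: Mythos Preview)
Your proposal is correct and follows essentially the same approach as the paper: apply Theorem~\ref{thm-dyn} to $\phi$ and to $\phi^{-1}$ to obtain unique integrability of $E^{cs}$ and $E^{cu}$ respectively, then intersect. In fact your write-up is considerably more detailed than the paper's one-sentence justification, correctly filling in the verification that $E^{cs}$ is linearly growing for $\phi$ (and $E^{cu}$ for $\phi^{-1}$) and the transversal intersection argument, both of which the paper leaves implicit.
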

Corollary \ref{thm-part} generalizes a result in \cite{Bri03} that gives unique integrability for \(E^{c}\) under the stronger assumption that \(\phi\) is \textit{center-isometric}, i.e \(\|D\phi v\|=\|v\|\) for every \(v\in E^{c}\).
Note that partially hyperbolic systems are special cases of dominated splitting in \eqref{eq-dominated} where \(E=E^{s}\oplus E^{c}\) and \(F=E^{u}\) or \(E=E^{s}\) and \(F=E^{c}\oplus E^{u}\). Therefore  Corollary \ref{thm-part} is a direct application of Theorem \ref{thm-dyn}, by showing that both \(E^{s}\oplus E^{c}\) and \(E^{c}\oplus E^{u}\) are uniquely integrable.

\subsection{ Philosophy and overview of the paper}\label{sec:phil}
Our main result is the Main Theorem, whose proof will occupy Sections \ref{subsec-existence} and \ref{subsec-uniqueness}, and all other results are, directly or indirectly, corollaries of the Main Theorem and will be proved in Section \ref{sec-ex}.
In the Appendix we prove some basic lemmas required from analysis.
The proof of the Main Theorem can be divided, as usual, into two parts: The existence of integral manifolds, which will be proved in Section \ref{subsec-existence},
and the uniqueness, which will be proved in Section \ref{subsec-uniqueness}.

The key idea in the proof of existence is the following.  Given a set of \(m\) linearly independent differentiable vector fields \(X_{1},...,X_{m}\), there is a canonical way of constructing an \(m\)-dimensional manifold \(W\) by successive integration of the vector fields, see \eqref{W}. In the case where the Frobenius involutivity \eqref{eq-Fro} is satisfied, \(W\) can be shown to define an integral manifold of the span of \(X_{1},...,X_{m}\), and this is indeed one possible strategy to prove Frobenius theorem.
Our main idea is to give a quantitative estimate of how ``non-integrable'' the manifold \(W\) is in the general case in terms of certain quantities which come into our definition of asymptotic involutivity, see Proposition \ref{prop-uniformsurfaces}. We then apply Proposition \ref{prop-uniformsurfaces} to our sequence \(E^{k}\) to get that the corresponding manifolds \(W^{k}\) are getting closer to being integral manifold and we show that the limit defines an integral manifold, see Section \ref{sec:convergence}.

The proof of Proposition \ref{prop-uniformsurfaces} relies on the crucial observation that the involutivity is essentially related to the pushforward of vector fields along flows. Indeed, one way to write the involutivity of a bundle \(E\)  is
that there is a choice of vector fields \(X_{1},...,X_{m}\) that span \(E\) such that \([X_{i},X_{j}]=0\) or, equivalently,  that the pushforward along the flow of \(X_{i}\) leaves \(X_{j}\) invariant, i.e
\[
[X_{i},X_{j}]=0\Longleftrightarrow De^{tX_{i}}X_{j}=X_{j}
\]
where \(e^{tX_{i}}\) denotes the flow of \(X_{i}\). The quantitative measurement of non-integrability of \(E\) mentioned above is thus essentially given by the quantity \(De^{tX_{i}}X_{j}-X_{j}\). This difference can further be expressed by the pushforward of the Lie bracket \([X_{i},X_{j}]\) along the flow of \(X_{i}\), see \eqref{pusg}, which reduces the problem of that of estimating the norm of the pushforward.

The method by which we achieve this is perhaps the main technical innovation in the paper.
Standard techniques give estimates of the form
\begin{equation}\label{eq-pushest}
||De^{tX}_p||_{\infty} \lesssim e^{t|X|_{C^1}}.
\end{equation}
However this is not useful when $X$ approximates a continuous vector field, as in our case, since the $C^1$ norm  of $X$ might blow-up. In certain settings, using the notation of differential forms, there is a better estimate by  Hartman (see section 9 of chapter 5 in \cite{Har02}), who gives
\begin{equation}\label{eq-pushest2}
||De^{tX}_p|| \lesssim e^{t|d\eta|_{\infty}}
\end{equation}
where \(X\in\ker{\eta}\). It is easy to see that \eqref{eq-pushest} is much weaker than \eqref{eq-pushest2}. For example we consider the  simple case where \(X=\partial_{x}+b\partial_{y}\) and \(\eta=dy-bdx\).
In this case, \(|X|_{C^{1}}\) involves both $|\frac{\partial b}{\partial x}|_{\infty}$ and $|\frac{\partial b}{\partial y}|_{\infty}$ whereas \(|d\eta|_{\infty}\) involves only  $|\frac{\partial b}{\partial y}|_{\infty}$ since $d\eta = \frac{\partial b}{\partial y} dx\wedge dy$.
Another example is where \(\eta=df\) for some \(C^{1}\) function \(f\) and \(X\) is any vector field in the kernel of \(\eta\), in this case \eqref{eq-pushest2} is always satisfied while \eqref{eq-pushest} may not even make sense since \(X\) may not be differentiable.
In our case, see Proposition \ref{prop-boundedflow}, we obtain an even weaker condition,
\begin{equation}\label{eq-pushest3}
||De^{tX}_p||_{\infty} \lesssim e^{tM}
\end{equation}
where \(M\)  is \(d\eta\) evaluated at two specific directions, one in \(\ker(\eta)\) and the other in the transverse subspace of \(\ker(\eta)\), see \eqref{def-Mk}. In particular, the fact that \(d\eta\) is evaluated at a vector in the kernel of \(\eta\) plays an important role in bounding the value of \(M\) in specific  applications.

The bound \eqref{eq-pushest3} also comes into play in the proof of uniqueness under the exterior regularity condition. The key point of the proof is first of all to reduce the problem to that of uniqueness of solutions for ODE's, as we show below. To prove the uniqueness for ODE's we use an innovative argument based on Stoke's Theorem rather than the more standard approach based on Gronwall's inequality.
To present a brief conceptual overview of the argument, we consider for simplicity a vector field \(  X  \) on a surface.

For smooth vector fields we can define a change of coordinates  that straightens out the integral curves and we can  define a differential \(  1  \)-form \(  \alpha  \) with \(  X\in ker (\alpha)  \) and \(  d\alpha=0  \). Uniqueness of solutions is then an easy consequence of Stoke's Theorem:   the integration of
\(\alpha\) along any closed curve is zero and so,  by contradiction, if \(X\) is not
uniquely integrable at a point there is  a closed curve \(  \gamma  \) formed by two integral curves of \(X\) and a curve \(  \lambda  \) transversal to \(X\) (as in Figure \vref{fignonu}). The integral of \(\alpha\) along \(  \gamma  \)  is non-zero because  \(  \alpha(X)=0  \) and only one piece, \(  \lambda  \), of \(  \gamma  \) is  transverse to \(X\), and thus we get a contradiction. In the case of \emph{continuous} vector fields we consider a sequence of smooth approximations \(  X^{k}  \) of \(  X  \) and corresponding differential 1-forms \(  \alpha^{k}  \) (which do not necessarily have to converge to \(  \alpha  \)). Integrating these forms \(  \alpha^{k}  \) along the very same closed curve \(  \gamma  \) we cannot apply the exact same argument because we may have \(  \alpha^{k}(X)\neq 0  \) but, using Equation \eqref{eq-pushest3}, we can show that \(  |\alpha^{k}(X)|\to 0  \) as \(  k\to \infty  \) and we then show that this  is sufficient to obtain uniqueness for \(  X  \).

\section{Existence of Integral Manifolds}\label{subsec-existence}
In this section we are going to prove the existence of integral manifolds under the asymptotic involutivity, thus proving the first part of the Main Theorem. The general strategy is quite geometric and intuitive. We construct a sequence of local integral manifolds \( W^{k} \) and show that they converge to a manifold which is an integral manifold of the distribution \( E \). The approximating manifolds \( W^{k} \) will be constructed in terms of the approximating \( C^{1} \) distributions \( E^{k} \) but are of course in general not integral manifolds of these distributions since the \( E^{k} \) are not in general integrable. We can measure how far these manifolds are from being integral manifolds by comparing their tangent spaces to the distributions \( E^{k} \) and the key step in the proof will consist of relating this ``distance'' to the quantities involved in the definition of asymptotic involutivity in terms of the forms which define \( E^{k} \) and their derivatives.

To emphasize the generality of our approach, we work first in the context of an arbitrary \( C^{1} \) distribution.
In section \ref{sec-aim} we define a $C^{1}$ manifold $\mathcal{W}$ associated to this distribution, and state the key estimate in Proposition \ref{prop-uniformsurfaces} which bounds the ``non-integrability'' of $\mathcal{W}$. We reduce the proof of Propositon \ref{prop-uniformsurfaces} to that of a more technical Proposition \ref{prop-boundedflow} which uses the pushforward of vector-fields insides this distribution.  In Subsection \ref{ssec-vfields} we prove Proposition \ref{prop-boundedflow} and then in Subsection \ref{sec:convergence} we apply the estimates to our sequence of approximations.

\subsection{Almost integral manifolds}\label{sec-aim}
Let $\Delta$ be a $C^1$ $m$-dimensional bundle on an open set $U\subset\mathbb{R}^{n+m}$ for $m,n\geq1$. Fix
a point $x_0 \in M$.
We can choose a coordinate system $(x^1,...,x^m,y^1,...,y^n, U)$ centered at $x_0\in U$ so that
$\Delta$ is spanned by vector fields of the form
\begin{equation}\label{eq-vecfields2}
X_i:=\frac{\partial}{\partial x^i} +
\sum_{j=1}^{n}a^{i}_j\frac{\partial}{\partial y^{j}}
\end{equation}
for some $C^1$ functions $a_i^j$ for $i=1,...,m$. For later on use we also define

\begin{equation}\label{Y}
\mathcal{Y}_p:=\text{span}\left\{\frac{\partial}{\partial y^{\ell}}|_p, \ell=1,...,n \right\}
\end{equation}
One of the most useful properties of such vector-fields is that $[X_i,X_j]_p \in \mathcal{Y}_p$ for all $i,j=1,...,m$ and $p \in U$. This property will be used repeatedly all through out the paper.

Since the vector fields \(  X_{i}  \) are \(  C^{1}  \) in \(  U  \), they are uniquely integrable and we let $e^{tX_{i}}(p)$ denote the flow associated to \(  X_{i}  \) starting at the point \(  p  \).
Then, for \(  \epsilon_{1}>0  \) sufficiently small, we define the map $W: (-\epsilon_1,\epsilon_1)^m \rightarrow U$~by
\begin{equation}\label{W}
 W(t_1,...,t_m) = e^{t_mX_m} \circ \cdots \circ e^{t_1X_1}(x_0).
\end{equation}
The set
$$
\mathcal W:=W((-\epsilon_1,\epsilon_1)^m)
$$
is our candidate manifold that ''integrates'' the set of vector fields $\{X_i,i=1,...,m\}$. In general it is not an integral manifold of \(  \Delta  \).

Let $\{U_i\}_{i=1}^{\ell}$ be any open cover of $U$,
$\eta^i_1,...,\eta^i_n$ a basis of sections of $\mathcal{A}^1(\Delta)$ on $U_i$ and let $A^i$ be the section of the $F(\mathcal{A}^1(\Delta))$ on $U_i$ formed by these sections. We adopt all the notations given in section \ref{sec-mainthm} for these objects (but we drop the index $k$). We also denote by $A^{-1,i}_p$
the inverse of $A^i_p$ restricted to $\mathcal{Y}_p$

\begin{prop}\label{prop-uniformsurfaces}
For every $t=(t_1,...,t_m)\in(-\epsilon_1,\epsilon_1)^m$
and   $i=1,...,m$ we have
\begin{equation}\label{mainapprox}
\left|\frac{\partial W}{\partial  t_i}(W(t)) - X_i(W(t))\right|
\leq m\epsilon_1 \sup_{r,s,j \in \{1,...,\ell\}}||dA^r|_{\Delta}||_{\infty} ||A^{-1,s}||_{\infty}
e^{m\epsilon_1 M^j_{A}}.
\end{equation}

\end{prop}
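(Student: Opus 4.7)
The plan is to reduce the estimate to controlling how much the composition of flows defining $W$ fails to push the vector field $X_i$ to itself, and then to apply Proposition \ref{prop-boundedflow} together with Cartan's formula. For $i = m$, the identity $\partial_{t_m}W(t) = X_m(W(t))$ is immediate from the definition of $W$, so we may assume $i < m$. By the chain rule applied to $W = e^{t_m X_m} \circ \cdots \circ e^{t_1 X_1}$,
\[
\partial_{t_i} W(t) = D\Phi_{W_i(t)}\, X_i(W_i(t)),
\]
where $\Phi := e^{t_m X_m} \circ \cdots \circ e^{t_{i+1} X_{i+1}}$ and $W_i(t) = e^{t_i X_i} \circ \cdots \circ e^{t_1 X_1}(x_0)$. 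Telescoping $\Phi$ over its $m - i$ individual flow factors reduces the target quantity to a sum, over $j = i+1, \ldots, m$, of pre-compositions of single-flow discrepancies of the form $D e^{t_j X_j}|_{q_{j-1}} X_i(q_{j-1}) - X_i(q_j)$, with $q_{k}$ the intermediate points along the trajectory.

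Each single-flow discrepancy is controlled via the integral identity derived from the variation equation for the flow of $X_j$, namely
\[
D e^{t_j X_j}|_q X_i(q) - X_i(e^{t_j X_j}(q)) = -\int_0^{t_j} D e^{(t_j - u) X_j}|_{e^{u X_j}(q)}\,[X_j, X_i](e^{u X_j}(q))\, du.
\]
Since $X_i, X_j \in \Delta = \ker A^r$ on a cover element $U_r$, Cartan's formula yields $A^r([X_j, X_i]) = -dA^r(X_j, X_i)$, and because the special form \eqref{eq-vecfields2} of the vector fields forces $[X_j, X_i] \in \mathcal{Y}$, inverting $A^r$ on $\mathcal{Y}$ gives the pointwise bound $|[X_j, X_i]| \leq \|A^{-1,r}\|_\infty \, \|dA^r|_\Delta\|_\infty$. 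For the flow differentials appearing both inside the integrand and in the outer factor of the telescoping sum, I would invoke Proposition \ref{prop-boundedflow} to obtain $\|De^{s X_\ell}\|_\infty \leq e^{|s|\,M_A^\ell}$, which is sharper than the naive bound $e^{|s|\,|X_\ell|_{C^1}}$ and depends only on the directional quantity $M_A^\ell$ from \eqref{def-Mk}.

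Assembling the pieces: each of the at most $m$ telescoping terms contributes the factor $\epsilon_1$ from the integration length, an exponential factor $e^{m\epsilon_1 M_A^\ell}$ from compounding the differentials of at most $m$ flows with total parameter bounded by $m\epsilon_1$, and the Lie-bracket bound $\|A^{-1,s}\|_\infty \|dA^r|_\Delta\|_\infty$. Taking suprema over the finitely many cover indices $r,s,j$ absorbs the fact that intermediate points of the trajectory may lie in different cover elements. The main obstacle I expect is the bookkeeping needed to ensure that the compounded flow differentials combine into a \emph{single} exponential $e^{m\epsilon_1 M_A^\ell}$ rather than nested ones, together with verifying that Proposition \ref{prop-boundedflow} indeed provides a uniform bound purely in terms of the directional quantity $M_A^\ell$. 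This sharpness, flagged in the philosophy section as the key technical innovation, is what will ultimately make the estimate useful when it is applied in Section \ref{sec:convergence} to an approximating sequence of $C^1$ bundles whose $C^1$-norms may blow up.
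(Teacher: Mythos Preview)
Your overall strategy---telescope the composition of flows, express each single-flow discrepancy as an integral of a flow differential applied to a Lie bracket, use that $[X_i,X_j]\in\mathcal{Y}$, and invoke Proposition~\ref{prop-boundedflow} together with Cartan's formula---is exactly the paper's approach. The gap is in how you invoke Proposition~\ref{prop-boundedflow}. That proposition does \emph{not} give an operator-norm bound $\|De^{sX_\ell}\|_\infty \leq e^{|s|M_A^\ell}$; it only controls $|De^{t_mX_m}\circ\cdots\circ De^{t_1X_1}_{x_0}Y|$ for $Y\in\mathcal{Y}$, and the bound carries the factor $|A^i_{x_0}(Y)|\,\|A^{-j}_{x_m}\|$ rather than $|Y|$. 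If you first bound $|[X_j,X_i]|\leq \|A^{-1,r}\|\,\|dA^r|_\Delta\|$ pointwise and then try to compose individual flow-differential bounds, each single-flow application of Proposition~\ref{prop-boundedflow} picks up a factor $\|A^i\|\,\|A^{-j}\|$, and iterating over up to $m$ flows accumulates $(\|A\|\,\|A^{-1}\|)^m$, which is not absorbed by anything and destroys the estimate. This is precisely the ``nested versus single exponential'' obstacle you flag, and it is not mere bookkeeping.

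The fix, which is what the paper does, is to apply Proposition~\ref{prop-boundedflow} \emph{once} to the entire composition $De^{t_mX_m}\circ\cdots\circ De^{sX_j}$ acting directly on $Y=[X_i,X_j]\in\mathcal{Y}$. The factor $|A^i(Y)|$ in the conclusion of Proposition~\ref{prop-boundedflow} then equals $|dA^i(X_i,X_j)|$ by Cartan's formula, producing the $\|dA^r|_\Delta\|_\infty$ term directly with a single $\|A^{-1,s}\|_\infty$ and a single exponential $e^{m\epsilon_1 M_A^j}$. In particular, your separate pointwise bound on the Lie bracket is unnecessary: it is subsumed by feeding $[X_i,X_j]$ into Proposition~\ref{prop-boundedflow} and applying Cartan to the resulting $|A^i([X_i,X_j])|$.
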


Notice that if the distribution $\Delta$ satisfies the usual Frobenius involutivity condition then
$dA^r|_{\Delta}=0$ for all $r$ and then  Proposition
\ref{prop-uniformsurfaces} implies that $\partial W/\partial  t_i=X_i$ which implies that $\mathcal W$ is an integral
manifold of $\Delta$.  In our setting, the distributions \(  E^{k}  \) are not involutive but the weak asymptotic involutivity condition implies that they are increasingly ``almost involutive'' and thus, by Proposition \ref{prop-uniformsurfaces}, ``almost integrable''. In Section \ref{sec:convergence} we will show that this implies that we can pass to the limit and obtain an integral manifold for our initial distribution \(  E  \) of the Main Theorem.

Proposition \ref{prop-uniformsurfaces} follows from the next proposition which we prove in Section~\ref{ssec-vfields}.

\begin{prop}\label{prop-boundedflow}
Let \(  \Delta  \) be a $C^1$, rank \(  m  \) distribution on \(  U  \), \(  X_{1},..., X_{m}  \) a basis of \(  \Delta  \) of the form \eqref{eq-vecfields2} and \(  \mathcal Y  \) the complementary distribution of the form \eqref{Y}. Let $\{U_i\}_{i=1}^{q}$ be an open cover of $U$,  \(  \{\eta^i_{1},...,\eta^i_n\}  \) basis of sections of \(  \mathcal A^{1}(\Delta)   \) defined on $U_i$ and $A^i$ be the section of \(  F(\mathcal A^{1}(\Delta) )  \) on $U_i$ formed by these differential 1-forms so that they form a compatible cover. Then for all $(t_1,...,t_m)\in(-\epsilon_1,\epsilon_1)^m$ and
$Y\in \mathcal{Y}$ we have
\begin{equation}\label{eq-boundedflow}
|De^{t_mX_m}\circ...\circ De^{t_1X_1}_{x_{0}} Y|
\leq \sup_{\substack {s \in \{1,...,\ell\}}}|A^i_{x}(Y)|||A^{-j}_{x_m}||e^{m\epsilon_1 M^s_{A}}
\end{equation}
where $x_m = e^{t_mX_m}\circ...\circ e^{t_1X_1}(x_0)$ and $i,j$ are such that $x_{0} \in U_i$, $x_m \in U_j$.
\end{prop}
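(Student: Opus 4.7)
The plan is to control the pushforward $Y_t := De^{tX_k}(Y)$ during each single--flow piece by tracking the transverse coordinate $A^i(Y_t)$, and then to concatenate the estimates over the $m$ successive pieces. The crucial preliminary observation is that the special form \eqref{eq-vecfields2} of $X_k$ --- with constant coefficient $1$ in $\partial/\partial x^k$ and coefficients $a_{k\ell}$ only in the $\partial/\partial y^\ell$ directions --- causes the flow to preserve the transverse distribution, that is $De^{tX_k}(\mathcal Y_p)\subset\mathcal Y_{e^{tX_k}(p)}$. This is checked by a direct block computation: the map $e^{tX_k}$ shifts $x^k$ by $t$ and leaves the other $x^j$ fixed, so the Jacobian has vanishing $x$--block on its $y$--columns. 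Since $Y\in\mathcal Y$ at $x_0$, this invariance propagates inductively, so that $Y_t$ lies in $\mathcal Y$ at its current base--point at every moment of the broken flow.

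The second step is a differential inequality for $A^i(Y_t)$ along one piece $e^{tX_k}$ inside a chart $U^i$. Since each row $\eta^i_j$ of $A^i$ annihilates $X_k\in\Delta$, Cartan's formula collapses to $\mathcal L_{X_k}\eta^i_j = X_k\intprod d\eta^i_j$, and the standard pullback identity $\tfrac{d}{dt}(e^{tX_k})^*\eta^i_j = (e^{tX_k})^*\mathcal L_{X_k}\eta^i_j$ then gives
\[
\frac{d}{dt}A^i(Y_t) \;=\; dA^i(X_k,\,Y_t).
\]
Using the invariance of $\mathcal Y$ established in the first step, we substitute $Y_t = A^{-i}(A^i(Y_t))$ on the right and invoke the definition \eqref{def-Mk} to obtain
\[
|dA^i(X_k,Y_t)| \;=\; |dA^i(X_k,\,A^{-i}A^i(Y_t))| \;\leq\; M^i_A\,|X_k|\,|A^i(Y_t)|.
\]
Absorbing the uniform bound on $|X_k|$ into $\epsilon_1$, Gronwall's inequality produces the per--piece estimate $|A^i(Y_{t_k})|\leq|A^i(Y_0)|\,e^{\epsilon_1 M^i_A}$.

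To finish, we chain the single--piece estimates. As the broken orbit $x_0,x_1,\ldots,x_m$ traverses the cover, we select a sequence of charts $U^{i_1},\ldots,U^{i_m}$ containing the successive pieces. At each chart switch at a point $p\in U^{i_s}\cap U^{i_{s+1}}$, the compatibility relation $\|A^{i_{s+1}}_p\circ A^{-i_s}_p\|=1$ implies $|A^{i_{s+1}}(Y)|=|A^{i_s}(Y)|$ at $p$, so the bookkeeping transfers between charts without loss. Multiplying together the $m$ single--piece bounds and taking the supremum over the charts visited yields $|A^j_{x_m}(Y_{t_1+\cdots+t_m})|\leq |A^i_{x_0}(Y)|\,e^{m\epsilon_1 \sup_s M^s_A}$. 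Finally, since $Y_{t_1+\cdots+t_m}\in\mathcal Y_{x_m}$, we recover the vector norm via $|Y_{t_1+\cdots+t_m}|\leq \|A^{-j}_{x_m}\|\cdot|A^j_{x_m}(Y_{t_1+\cdots+t_m})|$, giving \eqref{eq-boundedflow}. The main conceptual hinge --- and the precise reason why the non--involutivity quantity $\|dA|_\Delta\|$ is absent from the statement --- is the preservation $De^{tX_k}(\mathcal Y)\subset\mathcal Y$: without it, the Gronwall step would pick up an extra term $dA^i(X_k,v)$ with $v\in\Delta$, which cannot be controlled by $M^s_A$ alone.
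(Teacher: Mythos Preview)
Your argument is essentially the paper's, with one pleasant streamlining and one small omission.

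The streamlining: to obtain the evolution equation $\tfrac{d}{dt}A^i(Y_t)=dA^i(X_k,Y_t)$ you invoke Cartan's formula $\mathcal L_{X_k}\eta^i_j=\iota_{X_k}d\eta^i_j$ together with the pullback identity, whereas the paper establishes the same integral identity (its Lemma~\ref{nonaut}/\ref{lem-push}) by applying Stokes' Theorem to a small parametrized surface swept out by a transverse curve under the flow. Both routes yield exactly the same linear ODE for $A^i(Y_t)$; your derivation is shorter, the paper's is perhaps more in keeping with the Stokes-based philosophy of Section~\ref{subsec-uniqueness}.

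The omission: you assume each single flow segment $e^{t_kX_k}$ lies entirely in one chart $U^{i_k}$. In general a segment may cross several elements of the cover, so one must first subdivide the broken orbit into finitely many connected pieces $\{I_j\}_{j=1}^u$, each contained in a single $U^{\ell_j}$ (this is what the paper does at the start of Section~\ref{ssec-vfields}). The rest of your argument then goes through verbatim: compatibility gives factor $1$ at every chart switch, and since the total time is $\sum_k|t_k|\le m\epsilon_1$, the product of the Gronwall factors is still bounded by $e^{m\epsilon_1\sup_s M^s_A}$ regardless of how many subdivision points $u$ there are. A side remark: ``absorbing $|X_k|$ into $\epsilon_1$'' is not quite right as written, since the stated inequality \eqref{eq-boundedflow} has a specific exponent; the paper is equally casual on this point, and in applications the harmless constant is absorbed into the choice of $\epsilon$.
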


\begin{proof}[Proof of Proposition \ref{prop-uniformsurfaces} assuming Proposition \ref{prop-boundedflow}]

Observe first that by the chain rule, for $i=1,...,m$, we have
\begin{equation}\label{eq-pushforward}
\frac{\partial W}{\partial  t_i}=(De^{t_mX_m}\circ...\circ De^{t_{i+1}X_{i+1}})X_i.
\end{equation}
where $De^{t_iX_i}$ denotes the differential of the flow with respect to the spatial coordinates (to simplify the notation we omit the base points at which the derivatives are calculated because our estimates will be uniform in \( U \) and so the specific base points do not matter).
By a relatively standard result on   the calculus of vectors (see \cite[Chapter 2]{AgrSac04}),
 for any two vector fields $Z,X$ on $U$ we have
\begin{equation}\label{pusg}
 (De^{tZ}X)(x)-X(x)=\int_0^t(De^{sZ}[X,Z])(x)ds.
\end{equation}
Thus, for \(  t=(t_{1},...,t_{m})  \), using \eqref{eq-pushforward} and \eqref{pusg}, we have
\begin{align*}
\frac{\partial W}{\partial  t_i}(W(t)) - X_i(W(t)) &=
(De^{t_mX_m}\circ...\circ De^{t_{i+1}X_{i+1}}) X_i - X_i
\\
&=\sum_{j=i+1}^{m}De^{t_mX_m}\circ...\circ De^{t_{j+1}X_{j+1}}\left(De^{t_{j}X_{j}}X_i-X_i\right)
 \\
  &=\sum_{j=i+1}^{m}De^{t_mX_m}\circ...\circ De^{t_{j+1}X_{j+1}}\int_0^{t_{j}}De^{sX_{j}}[X_{i},X_{j}]ds\\
  &=\sum_{j=i+1}^{m}\int_0^{t_{j}}De^{t_mX_m}\circ...\circ De^{t_{j+1}X_{j+1}}De^{sX_{j}}[X_{i},X_{j}]ds
\end{align*}
Then taking norms on both sides we get
\begin{equation}\label{eq-tangentdif}
\left|\frac{\partial W}{\partial  t_i}(W(t)) - X_i(W(t))\right| \leq
 m\epsilon_1\max_{\substack{(t_m,...,t_1) \in [-\epsilon_1,\epsilon_1]^m \\s,r \in \{1,...,m\}}}|De^{t_mX^k_m}\circ...\circ De^{t_1X^k_1}[X_s,X_r]|_{\infty}
\end{equation}
Note that by the choice of $X_{i}$, the brackets $[X_s,X_r]$ lie in \(  \mathcal{Y}  \) so we can apply Proposition \ref{prop-boundedflow} with $Y$ replaced by $[X_s,X_r]$ to get
\begin{equation}
|De^{t_mX_m}\circ...\circ De^{t_1X_1}_x [X_s,X_r]|
\leq \sup_{\substack{s,r,j \in \{1,...,\ell\}}} |A^i_{x}([X_s,X_r])| \  ||A^{-r}||_{\infty}e^{m\epsilon_1 M^j_{A}}
\end{equation}
Then using Cartan's formula
$$
|A^i_{x}([X_s,X_r])|= |dA^i_x(X_s,X_r)|
$$
we get
\begin{equation}\label{eq-boundedflow2}
|De^{t_mX_m}\circ...\circ De^{t_1X_1}_x [X_s,X_r]|
\leq \sup_{\substack{s,r,j \in \{1,...,\ell\}}} ||dA^s|_{\Delta}||_{\infty} ||A^{-r}||_{\infty}e^{m\epsilon_1 M^j_{A}}.
\end{equation}
By inserting Equation \eqref{eq-boundedflow2} into Equation \eqref{eq-tangentdif} we get Proposition \ref{prop-uniformsurfaces}.
\end{proof}

\subsection{Proof of Proposition \ref{prop-boundedflow}}\label{ssec-vfields}

Proposition \ref{prop-boundedflow} is the technical heart of the proof of existence of integral manifolds  where we use the exterior derivative of the annihilator differential forms to control the push forwards of our vector-fields.
We first define a non-autonomous flow which corresponds to flowing along a direction $X_i$ and then switching
to $X_{i+1}$ and so on. Let $(t_1,...,t_m)\in(-\epsilon_1,\epsilon_1)^m$ and $T_{i}=\sum_{\ell=1}^i|t_{\ell}|$.
We define the non-autonomous piecewise smooth vector field
 $$
 X_t:=\sigma (t_{i})X_i \quad\text{ if }\quad T_{i} \leq t < T_{i+1}
 $$
where $\sigma$ is the sign function. Its associated non-autonomous flow is denoted by $\phi(t)$. With this notation we have that for any $T_{i}< t< T_{i+1}$
\[
\phi(t) = e^{\sigma(t_i)(t-T_{i})X_{i}}\circ...\circ e^{\sigma(t_{1})T_{1}X_1} (x),
\]
\[
Y_t = D\phi(t)Y = De^{\text{s}(t_i)(t-|t_i|)X_{i}}\circ...\circ De^{t_1X_1}_x Y .
\]
Let $\phi$ be the piecewise smooth curve which is the image of the map $\phi: [0,T_{m}]\rightarrow U$.
Recall now that we had a cover $\{U_{i}\}_{i=1}^{q}$ of $U$. We can take the intersection of $\phi$ with these open sets and consider the connected components of these intersections, which are curves in $\phi$, and which we denote
 by $\{I_j\}_{j=1}^{u}$. By shrinking and reindexing $I_i$ we can assume that $I_{i+1} = \phi([s_i,s_{i+1}])$ with $s_0=0$, $s_u = T_{m}$, $s_i < s_{i+1}$ and that each $I_i$ is inside one of the elements $U_{\ell_{i}}$ of the covering $\{U_i\}_{i=1}^q$. We let $\{A^{\ell_i}\}_{i=1}^u$ denote
restrictions of the sections of $F(\mathcal{A}^1(\Delta))$ defined on the $U_{\ell_i}$'s.

\begin{lem}\label{nonaut} For every $i=1,...,u$, $s_{i} > s \geq s_{i-1}$ and $Y \in \mathcal{Y}$ we have
\begin{equation}\label{eq1}
A^{\ell_i}_{s}(Y_{s})= A^{\ell_i}_{s_{i-1}}(Y_{s_{i-1}}) +\int_{s_{i-1}}^{s}dA^{\ell_i}(X_{\tau}, Y_{\tau})(\phi(\tau))d\tau
\end{equation}
\end{lem}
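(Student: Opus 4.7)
The plan is to derive the identity as a componentwise application of Cartan's magic formula along the non-autonomous flow $\phi(\tau)$. Writing $A^{\ell_i}(v) = (\eta^{\ell_i}_{1}(v),\dots,\eta^{\ell_i}_{n}(v))$, it suffices to prove, for each individual $C^{1}$ one-form $\alpha := \eta^{\ell_i}_{j} \in \mathcal{A}^{1}(\Delta)$ defined on $U_{\ell_i}$, the scalar identity
\[
\alpha_{\phi(s)}(Y_{s}) \;-\; \alpha_{\phi(s_{i-1})}(Y_{s_{i-1}}) \;=\; \int_{s_{i-1}}^{s} d\alpha\bigl(X_{\tau}, Y_{\tau}\bigr)\bigm|_{\phi(\tau)}\, d\tau,
\]
and then to assemble the $n$ such identities into the vector identity of the lemma. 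By the very construction of the $I_{j}$'s, the arc $\phi([s_{i-1},s_{i}))$ lies inside $U_{\ell_i}$, so $\alpha$ and its exterior derivative are smooth along the arc and every quantity above is well-defined.

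Next, since the non-autonomous vector field $X_{\tau}$ is smooth on the complement of the finitely many switching times $T_{k}$, the interval $[s_{i-1},s]$ decomposes into finitely many maximal subintervals on each of which $X_{\tau} = \sigma(t_{k}) X_{k}$ is a fixed smooth vector field in $\Delta$ and $\phi(\tau)$ is a segment of its smooth flow. The map $\tau\mapsto\alpha_{\phi(\tau)}(Y_{\tau})$ is continuous across the switching times because $D\phi(\tau)$ is, so by telescoping it is enough to establish the scalar identity on each such smooth subinterval and sum. On a smooth subinterval, regard $Y$ as a fixed vector at the initial point of the subinterval, so that $\alpha_{\phi(\tau)}(Y_{\tau}) = (\phi(\tau)^{*}\alpha)(Y)$. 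Differentiating and using the classical identity $\partial_{\tau}\phi(\tau)^{*}\alpha = \phi(\tau)^{*}\mathcal{L}_{X_{k}}\alpha$ together with Cartan's magic formula $\mathcal{L}_{X_{k}}\alpha = \iota_{X_{k}}d\alpha + d(\iota_{X_{k}}\alpha)$ yields
\[
\frac{d}{d\tau}\alpha_{\phi(\tau)}(Y_{\tau}) \;=\; d\alpha\bigl(X_{k},Y_{\tau}\bigr)\bigm|_{\phi(\tau)} \;+\; d\bigl(\alpha(X_{k})\bigr)(Y_{\tau})\bigm|_{\phi(\tau)}.
\]

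The crucial simplification is that the second term vanishes identically: because $X_{k}$ is one of the spanning vector fields of $\Delta$ and $\alpha \in \mathcal{A}^{1}(\Delta)$, we have $\alpha(X_{k})\equiv 0$ on all of $U_{\ell_i}$, hence $d(\alpha(X_{k}))\equiv 0$ there, not merely along the orbit. Absorbing the sign $\sigma(t_{k})$ back into $X_{\tau}$, this gives the pointwise derivative identity $\frac{d}{d\tau}\alpha_{\phi(\tau)}(Y_{\tau}) = d\alpha(X_{\tau},Y_{\tau})|_{\phi(\tau)}$, and integration over the subinterval, followed by summation over the finitely many subintervals, yields the scalar identity displayed in the first paragraph. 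Reassembling the $n$ components then produces the vector identity of the lemma. No real obstacle is expected; the only points requiring any care are the finitely-many-switches bookkeeping and the observation that it is the identity $\alpha(X_{k})\equiv 0$ (equivalently $X_{k}\in\ker\alpha$ throughout $U_{\ell_i}$) that kills the $d\iota_{X}\alpha$ contribution and produces the clean formula in terms of $d\alpha$ alone.
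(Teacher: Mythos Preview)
Your proof is correct, but the route differs from the paper's. The paper also reduces to a single form $\eta_j$ and to a single autonomous piece of the flow, and it also uses $\eta_j(X_i)\equiv 0$ as the key cancellation; however, instead of the Lie-derivative/Cartan computation, the paper proves the autonomous case (Lemma~\ref{lem-push}) by a Stokes argument: flow a short curve $\gamma$ with $\gamma'(0)=Y$ under $e^{tX_i}$ to produce a surface $\Gamma$, apply Stokes to $\eta_j$ on $\Gamma$, and then differentiate in the $\gamma$-parameter at $0$. The composite case (Lemma~\ref{lem-comppush}) is then obtained by iteration, invoking the auxiliary Lemma~\ref{lem-inv} that $De^{tX_i}$ preserves $\mathcal{Y}$ so that the single-flow lemma can be reapplied. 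Your approach via $\partial_\tau(\phi_\tau^*\alpha)=\phi_\tau^*\mathcal{L}_{X}\alpha$ and $\mathcal{L}_{X}\alpha=\iota_X d\alpha+d(\iota_X\alpha)$ is more direct, avoids constructing the auxiliary surface, and does not need the $\mathcal{Y}$-invariance lemma at this stage (the identity holds for any initial vector, not just $Y\in\mathcal{Y}$). The paper's Stokes-based argument, on the other hand, is in the same geometric spirit as the Stokes argument used later in the uniqueness proof (Lemma~\ref{lem-difprop}), so it gives a more uniform narrative across the two halves of the Main Theorem.
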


\begin{proof}[Proof of Proposition \ref{prop-boundedflow} assuming Lemma \ref{nonaut}]
Equation \ref{eq1} can be rewritten as
\begin{equation}
A^{\ell_i}_{s}(Y_s)= A^{\ell_i}_{s_{i-1}}(Y_{s_{i-1}}) +\int_{s_{i-1}}^{s}dA^{\ell_i}_{\tau}(X_{\tau}, A^{-1,\ell_i}_{\tau}\circ A^{\ell_i}_{\tau}Y_{\tau})(\phi(\tau))d\tau
\end{equation}
This tells us that $A^{\ell_i}_{t}(Y_t)$ is the solution of the ODE
$$
\frac{dF}{dt} = dA^{\ell_i}_t(X_t, A^{-1,\ell_i}_t\circ F_t) \quad \text{for} \quad {s}_{i-1} < t < {s}_{i}
$$
with initial condition
$$
F({s}_{i-1}) = A^{\ell_i}_{s_{i-1}}(Y_{s_{i-1}}).
$$
This ODE is linear and piecewise $C^1$ in $t$ and $C^1$ in other variables so has unique solutions. Let $G^i_t$ be the fundamental matrix of this ODE which satisfies (see \cite{Har02} for instance)
\begin{equation}\label{eq-fun1}
|G^{i}_t| \leq e^{|t-s_i| \ ||dA^{\ell_i}(X_t,A_t^{-1,\ell_i}\cdot)||_{\infty}} \leq e^{|s_{i+1}-s_i|M^{\ell_i}_{A}}.
\end{equation}
Moreover
$$
 A^{{\ell_i}}_{s_{i}}(Y_s)=  G_s \circ A^{\ell_i}_{s_{i-1}}(Y_{s_{i-1}}),
$$
and so we have
\begin{equation}\label{eq-sol1}
Y_s= A^{-1,{\ell_i}}_{s_{i}} \circ G_s \circ A^{\ell_i}_{s_{i-1}}(Y_{s_{i-1}}).
\end{equation}
So repeatedly applying \eqref{eq-sol1} and using \eqref{eq-fun1}, we get
\begin{equation}\label{eq-mainineq}
|Y_{s_u}| \leq ||A^{-1,{\ell_u}}_{{s}_{u}}|| \ |A^{{\ell_1}}_{s_0}(Y)| \  \prod_{i=2}^{u-1}  ||A^{{\ell_i}}_{{s}_{i}}\circ A^{-1,{\ell_{i-1}}}_{{s}_{i}}|| \   \prod_{i=1}^{u} ||G^{i}_{{s}_{i+1}}||.
\end{equation}
But now, by assumption of compatible cover we get $||A^{{\ell_i}}_{{s}_{i}}\circ A^{{-1,\ell_{i-1}}}_{{s}_{i}}||=1$, and by \eqref{eq-fun1} we get
$$
 \prod_{i=1}^{u} ||G^{i}_{{s}_{i+1}}|| \leq e^{m\epsilon_1M^{\ell_i}}.
$$
We remind the reader that $s_0=0$, $s_u=T_{m}$, $Y_0 = Y$, $Y_{T_{m}} = De^{t_mX_m} \circ ... \circ De^{t_1X_m}_xY$, and so from equation \eqref{eq-mainineq}
we get
$$
|De^{t_mX_m} \circ ... \circ De^{t_1X_m}_xY| \leq  \sup_{\substack {s \in \{1,...,\ell\}}}||A^{-1,{\ell_u}}_{x_m}|||A^{{\ell_1}}_{x}(Y)|e^{m\epsilon_1M^s_{A}}
$$

\end{proof}
To prove Lemma \ref{nonaut},   first note that
$$
A^{\ell_i}_{s}(Y_{s}) = (\eta^{\ell_i}_1(Y_{s}),....,\eta^{\ell_i}_n(Y_{s}))
\text{ and }
dA^{\ell_i}_s(X_s,Y_s) = (d\eta^{\ell_i}_1(X_s,Y_{s}),....,d\eta^{\ell_i}_n(X_s,Y_s)).
$$
So it is sufficient to prove \eqref{eq1}  for a fixed differential form $\eta^{\ell_i}_j$ defined on $U_{\ell_i}$. For convenience in this part we will drop the index $\ell_i$ and denote the evaluation points as subscripts. Therefore we need to prove
\begin{equation}\label{eq2}
\eta(Y_{s})_{\phi(s)}= \eta(Y_{s_{i-1}})_{\phi(s_{i-1})} +\int_{s_{i-1}}^{s}d\eta(X_{\tau}, Y_{\tau})(\phi(\tau))d\tau.
\end{equation}
 We will first consider the case when the flow $\phi(t)$ is obtained from a single vector field, that is $\phi(t) = e^{tX_i}(x)$. The general case will be deduced from this one.
\begin{lem}\label{lem-push}
For every $x\in U$, $Y \in \mathcal{Y}$ and  $|t_i|$ small enough so that $x_i=e^{t_iX_i}(x) \in U$ we have
$$
\eta_j(De^{t_iX_i}_xY)_{x_i}= \eta_j(Y)_{x} + \int_{0}^{t_i}d\eta_j(X_i, De^{sX_i}_xY)_{e^{sX_i}(x)}ds
$$
for all $i=1,...,m$, $j=1,...,n$.
\end{lem}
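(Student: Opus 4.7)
The plan is to interpret the left-hand side as the value at $Y$ of the pullback form $(e^{tX_i})^{*}\eta_j$ and differentiate using the standard Lie derivative identity. More precisely, write $\phi_s:=e^{sX_i}$ and consider the scalar function
\[
f(s):=\eta_j\bigl(D\phi_s|_x Y\bigr)_{\phi_s(x)} = \bigl(\phi_s^{*}\eta_j\bigr)_x(Y).
\]
The lemma is exactly the statement $f(t_i)-f(0)=\int_0^{t_i}d\eta_j(X_i,D\phi_s|_xY)_{\phi_s(x)}\,ds$, so by the fundamental theorem of calculus it is enough to show that $f$ is differentiable with
\[
f'(s)=d\eta_j\bigl(X_i,\,D\phi_s|_x Y\bigr)_{\phi_s(x)}.
\]

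For this I would use the standard formula $\frac{d}{ds}\phi_s^{*}\eta_j = \phi_s^{*}\mathcal{L}_{X_i}\eta_j$, which gives
\[
f'(s)=\bigl(\phi_s^{*}\mathcal{L}_{X_i}\eta_j\bigr)_x(Y)=\bigl(\mathcal{L}_{X_i}\eta_j\bigr)_{\phi_s(x)}\bigl(D\phi_s|_x Y\bigr),
\]
and then apply Cartan's magic formula
\[
\mathcal{L}_{X_i}\eta_j=i_{X_i}d\eta_j+d\bigl(i_{X_i}\eta_j\bigr)=d\eta_j(X_i,\cdot)+d\bigl(\eta_j(X_i)\bigr).
\]
The crucial observation is that $\eta_j\in\mathcal{A}^{1}(\Delta)$ and $X_i\in\Delta$, so $\eta_j(X_i)\equiv 0$ on the domain of definition; hence $d(\eta_j(X_i))=0$ and $\mathcal{L}_{X_i}\eta_j=d\eta_j(X_i,\cdot)$. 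Substituting this into the expression for $f'(s)$ yields precisely the desired identity.

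Finally, since $\Delta$ is $C^{1}$, the vector field $X_i$ is $C^{1}$, so the flow $\phi_s$ and its differential $D\phi_s|_x$ are of class $C^{1}$ in $s$, the form $\eta_j$ is $C^{1}$, so $d\eta_j$ is continuous, and the integrand above is continuous in $s$. All differentiations are therefore justified and the Fundamental Theorem of Calculus applied to $f$ on $[0,t_i]$ (or $[t_i,0]$ if $t_i<0$) gives the claimed integral identity.

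There is no real obstacle here: the argument is essentially just Cartan's formula combined with the defining property $\eta_j|_\Delta=0$. The only subtle point is to make sure that the formula $\frac{d}{ds}\phi_s^{*}\eta_j=\phi_s^{*}\mathcal{L}_{X_i}\eta_j$ is applicable under our regularity (which it is, since $\Delta$ and hence the $X_i$ and $\eta_j$ are $C^{1}$), and that $|t_i|$ is chosen small enough that the whole flow segment $\{\phi_s(x):s\in[0,t_i]\}$ stays in the common domain of $X_i$ and of the fixed local basis $\{\eta_1,\dots,\eta_n\}$, which is exactly the hypothesis of the lemma.
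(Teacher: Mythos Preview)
Your proof is correct, but it takes a different route from the paper's. The paper proves the identity via Stokes' theorem: it chooses a short curve $\gamma$ with $\gamma(0)=x$, $\gamma'(0)=Y$, flows it by $X_i$ to obtain a parametrized surface $\Gamma$ with boundary consisting of $\gamma$, its image $\beta=e^{t_iX_i}\circ\gamma$, and two $X_i$-flow lines. Since $\eta_j(X_i)=0$, the flow-line contributions vanish and Stokes gives $\int_\beta\eta_j-\int_\gamma\eta_j=\int_\Gamma d\eta_j$; differentiating this integrated identity in the transverse parameter $\tilde t$ at $\tilde t=0$ yields the pointwise formula. Your argument instead differentiates $s\mapsto(\phi_s^{*}\eta_j)_x(Y)$ directly using $\frac{d}{ds}\phi_s^{*}\eta_j=\phi_s^{*}\mathcal L_{X_i}\eta_j$ and Cartan's formula together with $\eta_j(X_i)\equiv0$.

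The two arguments are essentially dual: the paper integrates first and differentiates afterward, while you differentiate first and integrate afterward. Your approach is shorter and more algebraic, and makes transparent that the hypothesis $Y\in\mathcal Y$ is not actually used in this lemma (it is only needed later, in the iteration). The paper's Stokes argument is more self-contained in that it avoids invoking the identity $\frac{d}{ds}\phi_s^{*}=\phi_s^{*}\mathcal L_{X_i}$ and Cartan's formula under merely $C^{1}$ regularity, and it resonates with the Stokes-based uniqueness argument in Section~3. Both are valid; your version would be the textbook differential-geometric proof.
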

\begin{proof}
 Let $\gamma$ be a curve defined on $[0,\tilde{t}]$ such that $\gamma(0)=x$, $\gamma'(0)=Y$ and ${e}^{t_iX_i}(\gamma)\subset U$. Note that $X_i$ is always transverse to $\gamma$. Denote
$y=\gamma(\tilde{t})$. Define the
parameterized surface $\Gamma$ by

$$r(s_1,s_2)= {e}^{s_2X_j} \circ \gamma(s_1)$$
for $0<s_1\leq\tilde{t}$ and $0< s_2\leq t_i$. Then the boundary of $\Gamma$ is composed of the curve $\gamma$ and the following
piecewise
smooth curves (see Figure \ref{figstoke}):
$$\xi_1(s) ={e}^{sX_i}(x) \quad \quad \xi_2(s) = {e}^{sX_i}(y) \quad \quad \beta(s) = {e}^{t_iX_i}\circ \gamma(s).
$$
\begin{figure}[h]
  \centering
  \includegraphics[width=80mm]{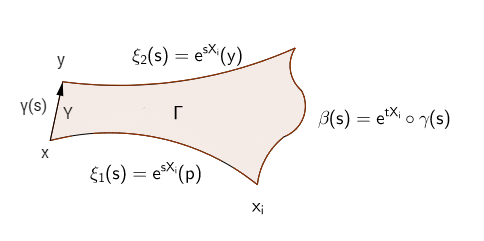}
  \caption{Applying Stoke's Theorem}\label{figstoke}
\end{figure}
Since $\eta_j(X_i)= 0$ for all $i,j$, using Stoke's theorem we have
$$\int_{\beta}\eta_j - \int_{\gamma}\eta_j = \int_{\Gamma} d\eta_j$$
which gives
\begin{equation}\label{eq-equality}
\begin{aligned}
\int_0^{\tilde{t}}\eta_j(\beta'(s_1))ds_1 = \int_{0}^{\tilde{t}}\eta_j(\gamma'(s_1))ds_1 &+\\
\int_{0}^{\tilde{t}}& \int_{0}^{t_i}d\eta_j(\frac{\partial r}{\partial s_1},\frac{\partial r}{\partial s_2}){(s_1,s_2)}ds_1ds_2.\\
\end{aligned}
\end{equation}

Differentiating \eqref{eq-equality} with respect to $\tilde{t}$ at $\tilde{t}=0$  we have

$$\eta_j(\beta'(0)) =\eta_j(\gamma'(0)) + \int_{0}^{t_i} d\eta_j(\frac{\partial r}{\partial s_1},\frac{\partial r}{\partial
s_2}){(0,s_2)}ds_2.$$
Using chain rule we have

$$\beta'(0)=D{e}^{t_iX_i}Y|_x \quad \quad \gamma'(0)=Y$$
and

$$\frac{\partial r}{\partial s_1}(0,s_2) = D{e}^{s_2X_i}Y|_x \quad \quad \frac{\partial r}{\partial s_2}(0,s_2) =
X_{i}({e}^{s_2X_i}(x))$$
one can write the equality \eqref{eq-equality} as

$$\eta_j(D{e}^{t_iX_i}_xY_{x_i}) =\eta_j(Y)_{x} + \int_{0}^{t_i}d\eta_j(D{e}^{s_2X_i}_xY,X_{s_2})_{r(0,s_2)}ds_2$$
which concludes the proof of the lemma.
\end{proof}
Our next step is to generalize Lemma \ref{lem-push} for the composition of differentials.

\begin{lem}\label{lem-comppush}
 For every $(t_1,...,t_m)\in(-\epsilon_1,\epsilon_1)^m$, $Y \in \mathcal{Y}$ and $j=1,...,n$ we have
$$\eta_j(De^{t_mX_m}\circ\cdots\circ De^{t_1X_1}Y)_{x_m}= \eta_j(Y)_{x_{0}}
 +\sum_{i=1}^m\int_{0}^{t_i}d\eta_j(X_i, De^{sX_i}\circ\cdots\circ
De^{t_1X_1}Y)(x_i(s))ds $$
 where $x_m=e^{t_mX_m}\circ\cdots\circ e^{t_1X_1}(x_0)$ and $x_i(s)=e^{sX_i}\circ\cdots\circ e^{t_1X_1}(x_0).$
\end{lem}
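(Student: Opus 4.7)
The plan is to prove Lemma \ref{lem-comppush} by induction on $m$. The base case $m=1$ is precisely Lemma \ref{lem-push}.

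For the inductive step, suppose the identity holds for compositions of length $m-1$. Set $V := De^{t_{m-1}X_{m-1}}\circ\cdots\circ De^{t_1X_1}\, Y$, viewed as a tangent vector at the point $x_{m-1} = e^{t_{m-1}X_{m-1}}\circ\cdots\circ e^{t_1X_1}(x_0)$. Applying Lemma \ref{lem-push} at $x_{m-1}$, with the vector field $X_m$ and with the initial vector $V$ in place of $Y$, yields
\begin{equation*}
\eta_j(De^{t_mX_m}\, V)_{x_m} \;=\; \eta_j(V)_{x_{m-1}} \;+\; \int_0^{t_m} d\eta_j\bigl(X_m,\, De^{sX_m}\, V\bigr)\bigl(e^{sX_m}(x_{m-1})\bigr)\, ds.
\end{equation*}
By the chain rule, $De^{sX_m}\, V = De^{sX_m}\circ De^{t_{m-1}X_{m-1}}\circ\cdots\circ De^{t_1X_1}\, Y$ and $e^{sX_m}(x_{m-1}) = x_m(s)$, so the last integral is precisely the $i=m$ term of the sum in the statement.

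For the first term on the right, the inductive hypothesis gives
\begin{equation*}
\eta_j(V)_{x_{m-1}} \;=\; \eta_j(Y)_{x_0} + \sum_{i=1}^{m-1}\int_0^{t_i} d\eta_j\bigl(X_i,\, De^{sX_i}\circ\cdots\circ De^{t_1X_1}\, Y\bigr)(x_i(s))\, ds.
\end{equation*}
Adding these two identities produces the identity claimed in Lemma \ref{lem-comppush}.

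The main subtlety I expect is that Lemma \ref{lem-push} is stated for an initial vector $Y \in \mathcal{Y}$, whereas the intermediate vector $V$ generally does not lie in $\mathcal{Y}$. To handle this, I would revisit the proof of Lemma \ref{lem-push} and observe that the hypothesis $Y\in\mathcal{Y}$ is used only to guarantee that a curve $\gamma$ with $\gamma'(0)=Y$ is transverse to $X_i$, so that the parameterized surface $r(s_1,s_2)=e^{s_2X_i}\circ\gamma(s_1)$ is a regular two-dimensional surface to which Stokes' theorem can be applied; the actual Stokes computation relies only on $\eta_j(X_i)=0$, which holds for any basis of $\mathcal{A}^1(\Delta)$. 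Thus Lemma \ref{lem-push} extends verbatim to any tangent vector $V$ that is not parallel to $X_m$ at the base point. The degenerate case $V\parallel X_m$ is handled by continuity, or directly by observing that then $\eta_j(V)=0$ and $d\eta_j(X_m,De^{sX_m}V)=0$ by the antisymmetry of the two-form together with the invariance of $\mathrm{span}(X_m)$ under $De^{sX_m}$, so both sides of the identity vanish. With this mild extension of Lemma \ref{lem-push} in hand, the inductive step above yields the lemma.
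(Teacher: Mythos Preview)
Your inductive argument is correct and coincides with the paper's approach in structure. The only difference is in how you resolve the subtlety you flagged: the paper does not extend Lemma~\ref{lem-push} at all. Instead it invokes Lemma~\ref{lem-inv}, which shows that the subspace $\mathcal{Y}$ is invariant under each $De^{t_iX_i}$ (this follows immediately from the block-triangular form of $De^{tX_i}$, given the special form \eqref{eq-vecfields2} of the $X_i$). Hence the intermediate vector $V = De^{t_{m-1}X_{m-1}}\circ\cdots\circ De^{t_1X_1}\,Y$ lies in $\mathcal{Y}$ automatically, and Lemma~\ref{lem-push} applies verbatim at each step. Your workaround---extending Lemma~\ref{lem-push} to arbitrary vectors via transversality plus a degenerate-case analysis---is valid (the hypothesis $Y\in\mathcal{Y}$ in that lemma is indeed stronger than what the Stokes computation needs), but the paper's route via invariance of $\mathcal{Y}$ is shorter and avoids revisiting that proof.
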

Lemma \ref{lem-comppush} will follow by successive applications of Lemma \ref{lem-push}. However we need to check first that the pushforward of the flows leaves the $ \mathcal{Y}$ subspace invariant.
\begin{lem}\label{lem-inv}
 For every $i=1,...,m$, $p\in U$,  $|t_i|<\epsilon_1$ and \(Y \in \mathcal{Y}\) we have
 $$ De^{t_iX_i}_pY\in \mathcal{Y}.$$
\end{lem}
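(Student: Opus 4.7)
The plan is a direct coordinate computation that leverages the very specific form \eqref{eq-vecfields2} of the basis vector fields \(X_i\). First I would write out the ODE for the flow \(e^{tX_i}\) in the coordinates \((x^1,\ldots,x^m,y^1,\ldots,y^n)\): since \(X_i=\partial/\partial x^i+\sum_j a_i^j\partial/\partial y^j\), the flow satisfies \(\dot x^j=\delta_{ij}\) and \(\dot y^k=a_i^k(x,y)\). The first system integrates trivially, independently of the \(y\)-coordinates, giving \(x^j(t)=x^j(0)\) for \(j\neq i\) and \(x^i(t)=x^i(0)+t\). Thus the \(x\)-components of \(e^{tX_i}(p)\) depend only on the \(x\)-components of \(p\); the flow acts as a pure translation in the \(x\)-direction.

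Next I would realize \(Y\in\mathcal{Y}_p\) by a curve \(\gamma(s)=p+sY\), using the coordinate chart as an identification with a Euclidean ball. Because \(Y=\sum_\ell c_\ell\,\partial/\partial y^\ell\), the \(x\)-components of \(\gamma(s)\) are constant in \(s\). By the observation above, the \(x\)-components of \(e^{t_iX_i}(\gamma(s))\) are then also constant in \(s\). Differentiating at \(s=0\) yields
\[
De^{t_iX_i}_p Y=\left.\frac{d}{ds}\right|_{s=0} e^{t_iX_i}(\gamma(s)),
\]
and all \(\partial/\partial x^j\) components of this vector vanish, so \(De^{t_iX_i}_p Y\in\mathcal{Y}_{e^{t_iX_i}(p)}\).

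There is essentially no obstacle here: the lemma is a purely algebraic consequence of the fact that each \(X_i\) has unit \(\partial/\partial x^i\) component and no other \(\partial/\partial x^j\) component, so its flow preserves the vertical foliation \(\{x=\text{const}\}\) and hence its tangent distribution \(\mathcal{Y}\). (Equivalently, one could observe that \([X_i,\partial/\partial y^\ell]=-\sum_j(\partial a_i^j/\partial y^\ell)\,\partial/\partial y^j\in\mathcal{Y}\), so the Lie derivative \(\mathcal{L}_{X_i}\) preserves \(\mathcal{Y}\), but the direct computation above is shorter.)
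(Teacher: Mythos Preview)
Your proposal is correct and follows essentially the same approach as the paper: both arguments observe that, because of the special form \eqref{eq-vecfields2}, the $x$-components of $e^{tX_i}(p)$ are simply a translation in $x^i$ and are independent of the $y$-coordinates of $p$, which forces the $\partial/\partial x^j$-components of $De^{t_iX_i}_pY$ to vanish for $Y\in\mathcal{Y}$. The paper phrases this by writing the Jacobian $De^{tX_i}$ as a block matrix with a zero upper-right $m\times n$ block, whereas you differentiate along a vertical curve, but these are the same computation.
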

\begin{proof}
Let $p=(x^1_0,...,x^m_0,y^1_0,...,y^n_0)$. Recall that $X_i = \frac{\partial}{\partial x^i} + \sum_{j=1}^n a^{ij}(x,y)\frac{\partial}{\partial y^j}$ and so the flow of this vector field is
$e^{tX_i}(x^1_0,...,x^m_0,y^1_0,...,y^n_0) = (x^1_0,...,x^i_0+ t,...,x^m_0, y^1(t,x_0,y_0)$ $,...,y^n(t,x_0,y_0))$ where $y^i(t,x,y)$ are functions
$C^1$ in their variables. Since the differential has the form
$$D(e^{tX_i})|_{(x,y)}=\left(
  \begin{array}{cccccc}
     Id_{m\times m}   &  0_{m\times n}   \\
     A_{n\times m}   &  B_{n\times n}   \\
  \end{array}
\right)
$$
for some matrices $A$ and $B$, the invariance of vectors in $\mathcal{Y}$ follows directly.

\end{proof}

\begin{proof}[Proof of Lemma \ref{lem-comppush}]
Let $(t_1,...,t_m)\in(-\epsilon_1,\epsilon_1)^m$ and $Y \in \mathcal{Y}$,
we first carry out the proof for $t_1,t_2 \geq 0$, $t_j=0$ for $j>2$.  First note that by Lemma \ref{lem-inv}
$De^{t_1X_1}_pY  \in \mathcal{Y}$. So applying Lemma \ref{lem-push} twice, one has that
$$\eta_j(De^{t_2X_2} \circ De^{t_1X_1}Y)_{x_2}= \eta_j(Y)_{x_2} + \int_{0}^{t_1} d\eta_j(X_1, De^{sX_1}Y)(x_1(s))ds$$
$$+\int_{0}^{t_2}d\eta_j(X_2, De^{sX_2}\circ De^{t_1X_1}_p Y)(x_2(s))ds.$$
The general case follows in the same way, by applying Lemma \ref{lem-push} repeatedly.
\end{proof}

\subsection{Convergence to Integral Manifolds}\label{sec:convergence}
In this section we show that an asymptotic involutive bundle is integrable, thus proving the first part of the Main Theorem. We suppose that the asymptotic involutivity in Definition \ref{defn-asyminv2} is satisfied, in particular we have the sequences of
differential forms
$\{\eta^{k,i}_1,..., \eta^{k,i}_n\}$ defined on open sets $U^k_i$ of the covering of $U$
and the sequence of bundles $E^{k}$ defined on $U$ which converges to
the continuous bundle
$E$. As before we can choose a coordinate system $(x,y)$ independent of $k$ where $E^{k}$ and $E$ are spanned by vector fields of the form \eqref{eq-vecfields2} (though for \(  E  \) the vector fields are only continuous). We recall that
  $A^{-k,i}_p$ denotes the inverse of $A^{k,i}_p$ restricted to $\mathcal{Y}_p$.
For $k>1$, let $W^k$ be the analogous of the map defined in \eqref{W} for $\Delta$.
By Proposition \ref{prop-uniformsurfaces}, for every $k>1$ we have
\[
\left|\frac{\partial W^k}{\partial  t_i}(W^k(t)) - X^k_i(W(t))\right|\leq   m\epsilon_1sup_{\substack{s,r,j \in \{1,...,\ell\}}}||dA^{k,s}|_{\Delta}||_{\infty}||A^{-k,r}||_{\infty}e^{m\epsilon_1M^{k,j}_{A}}
\]
Choosing $\epsilon_1$ small enough so that $\epsilon_1m \leq \epsilon$ and  using asymptotic involutivity we have
$$
\lim_{k\to0}\left|\frac{\partial W^k}{\partial  t_i}(W^k(t)) - X^k_i(W(t))\right|=0.
$$
In particular we have uniformly sized manifolds $\mathcal{W}^k$ whose tangent spaces converge to \(  E  \) in angle as $k \rightarrow \infty$.  This is enough to show that these manifolds converge to some manifold \(  \mathcal W  \) and that this is an integral manifold of \(  E  \). This fact is quite intuitive but we give a proof of such a statement in a more abstract setting for completeness.

\begin{prop}\label{prop-convergence} Let $E$ be a continuous tangent subbundle of rank $m$ defined on $U$ and $E^k$ be a $C^1$ approximation of $E$. Let $\mathcal{V}^k \subset U$ be a sequence of $C^1$ manifolds of dimension $m$ and of uniform size with a point $p$ in common.
Assume that

\begin{equation}\label{eq-convergence}\lim_{k\rightarrow 0}\sup_{q \in \mathcal{V}^k}\angle (T_q\mathcal{V}^k,E^k_q)=0\end{equation}
Then there exists a subsequence of submanifolds $\mathcal{W}^k \subset \mathcal{V}^k$, which converges to an $m$-dimensional manifold $\mathcal{W}$, which is an integral manifold of $E$ passing through $p$.
\end{prop}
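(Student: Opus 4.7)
The plan is to realise each \(\mathcal{V}^{k}\) locally as a graph over a common Euclidean domain and then extract a subsequence whose limit is automatically an integral manifold of \(E\). First I would choose coordinates \((x^{1},\dots,x^{m},y^{1},\dots,y^{n})\) centred at \(p\) in which \(E\) is transverse to \(\mathcal{Y}=\mathrm{span}\{\partial_{y^{j}}\}\), and represent \(E\) (resp.\ \(E^{k}\)) as the graph of a continuous (resp.\ \(C^{1}\)) matrix-valued function \(a(x,y)\) (resp.\ \(a^{k}(x,y)\)) on a small neighbourhood of \(p\). Convergence of \(E^{k}\) to \(E\) in angle then translates into uniform convergence \(a^{k}\to a\). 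Since the \(\mathcal{V}^{k}\) have uniform size and their tangent spaces are close to the uniformly transverse bundles \(E^{k}\), for \(k\) large enough each \(\mathcal{V}^{k}\) contains a graph \(\{(x,f^{k}(x)):x\in D\}\) over a fixed ball \(D\subset\mathbb{R}^{m}\), with \(f^{k}\in C^{1}(D,\mathbb{R}^{n})\) and \(f^{k}(0)=0\).

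Next I would turn the angle bound \eqref{eq-convergence} into a quantitative \(C^{0}\)-bound on \(Df^{k}\). Because the tangent space of this graph at \((x,f^{k}(x))\) is the graph of the linear map \(Df^{k}(x)\), while \(E^{k}\) at the same point is the graph of \(a^{k}(x,f^{k}(x))\), the angle hypothesis is equivalent to
\[
\sup_{x\in D}\bigl|Df^{k}(x)-a^{k}(x,f^{k}(x))\bigr|\longrightarrow 0\qquad\text{as }k\to\infty.
\]
In particular \(\{f^{k}\}\) is equi-Lipschitz on \(D\), so by Arzela--Ascoli there is a subsequence \(f^{k_{\ell}}\) converging uniformly on \(D\) to a continuous \(f\) with \(f(0)=0\).

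To finish I would upgrade this to \(C^{1}\) convergence. Since \(a^{k}\to a\) uniformly, \(f^{k_{\ell}}\to f\) uniformly, and \(a\) is continuous, one gets \(a^{k_{\ell}}(x,f^{k_{\ell}}(x))\to a(x,f(x))\) uniformly in \(x\in D\); together with the displayed bound this forces \(Df^{k_{\ell}}\) to converge uniformly to the continuous map \(x\mapsto a(x,f(x))\). The standard theorem that uniform convergence of a \(C^{1}\) sequence together with uniform convergence of its derivatives implies \(C^{1}\) convergence then gives \(f\in C^{1}(D,\mathbb{R}^{n})\) with \(Df(x)=a(x,f(x))\). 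Hence \(\mathcal{W}:=\{(x,f(x)):x\in D\}\) is a \(C^{1}\) submanifold through \(p\) whose tangent space at each point equals \(E\), i.e.\ an integral manifold of \(E\), and the graphs \(\mathcal{W}^{k_{\ell}}\subset\mathcal{V}^{k_{\ell}}\) of \(f^{k_{\ell}}\) converge to \(\mathcal{W}\) in Hausdorff distance.

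The main obstacle I expect is the graph-reduction of the first paragraph: one has to combine the uniform size of the \(\mathcal{V}^{k}\) with the uniform transversality of \(T\mathcal{V}^{k}\) to \(\mathcal{Y}\) --- obtained from \eqref{eq-convergence} and the angle convergence \(E^{k}\to E\) --- to produce a common domain \(D\) together with a uniform Lipschitz bound on the \(f^{k}\). Once this is in place everything reduces cleanly to Arzela--Ascoli plus the crucial continuity of the limit map \(a\), which is exactly what is ensured by \(E\) being a continuous distribution.
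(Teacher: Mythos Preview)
Your proposal is correct and follows essentially the same route as the paper: choose graph coordinates transverse to \(E\), write each \(\mathcal{V}^{k}\) locally as a graph \(y=f^{k}(x)\), use the angle hypothesis to get an equi-Lipschitz bound, extract a \(C^{0}\) limit via Arzel\`a--Ascoli, and then upgrade to \(C^{1}\) by showing the derivatives converge. The only cosmetic difference is that you phrase the derivative convergence via the matrix representation \(a^{k}(x,f^{k}(x))\to a(x,f(x))\), whereas the paper argues it via the spanning vector fields \(X^{k}_{i}\to X_{i}\); these are equivalent formulations of the same estimate.
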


\begin{proof} We choose coordinates $(x^1,...,x^m,y^1,...,y^n)$ so that $E_p= \text{span}\{\frac{\partial}{\partial x^i}\}_{i=1}^m$ and denote $\mathcal{Y} = \text{span}\{\frac{\partial}{\partial y^i}\}_{i=1}^n$. We shrink $U$ if necessary so that each $E_q$ is transverse to $\mathcal{Y}_q$ for all $q \in U$. Since $E^k$ converges to $E$,

$$\lim_{k\rightarrow 0}\sup_{q \in \mathcal{V}^k}\angle (T_q\mathcal{V}^k,E_q)=0.$$
Along with this, one has that $\mathcal{V}^k$ have uniform size so for $k$ large enough we have submanifolds $\mathcal{W}^k \subset \mathcal{V}^k$ which can be written as graphs of functions $G^k: V\subset E_p \rightarrow U$. Thus we can write $\mathcal{W}^k$ as the images of the functions:

$$W^k(x^1,...,x^m) = (x^1,...,x^m,G^k(x^1,...,x^m))$$
where

$$X^k_i=\frac{\partial W^k}{\partial x^i} = \frac{\partial}{\partial x^i} + \sum_{j=1}^m\frac{\partial G^k_j}{\partial x^i}\frac{\partial}{\partial y^j} $$
span the tangent space of $\mathcal{W}^k$. Note first that the differential $DW^k$ has $X^k_i$ as its columns which are linearly independent so $\mathcal{W}^k$ are $C^1$ embedded manifolds. Since the tangent space of $\mathcal{W}^k$ converges in angle to $E$, which is transverse to $\mathcal{Y}$, one has that
$\sup_{k,i}|X^k_i|_{\infty} \leq C_1$ for some constant $C_1>0$. Since the differential $DW^k$ is a matrix whose columns are $X^k_i$, we get that
$\sup_{k,i}|DW^k_i|_{\infty} \leq C_2$ for some constant $C_2>0$. Therefore the sequence of functions $W^k_i$ is equi-Lipschitz and equi-bounded and so up to choosing a subsequence, converges to a continuous function $W: V\rightarrow U$.

We are left to prove that \(\mathcal{W}:=W(V)\)  is a \(C^{1}\) \(m\)-dimensional manifold tangent to \(E\).  For this it is sufficient to prove that \(X^k_i\)'s converges to some linearly independent \(X_{i}\)'s that span \(E\).
This implies that $DW^k$ is a matrix which converges to another matrix, say $A$ whose columns are $X_i$.
Thus we have that $p \in \mathcal{W}^k$ for all $k$, $W^k \rightarrow W$ and $DW^k \rightarrow A$. Therefore in fact $W: V\rightarrow U$ is a $C^1$ function whose derivative is a matrix whose columns are $X_i$. Therefore $\mathcal{W}$ is a $C^1$ manifold that is tangent to $E$ and passes through $p$.

To prove the convergence of \(X^{k}_{i}\)'s, we first observe that $\text{span}\{X^k_i\}_{i=1}^m$ at $q_{k}=W^k(x^1,...,x^m)$ converges to $E$ at $q=W(x^1,...,x^m)$. Moreover $E$ can be spanned by vector fields of the form
$$X_i= \frac{\partial}{\partial x^i} + \sum_{j=1}^mH_{ij}(x,y)\frac{\partial}{\partial y^j}.$$
Since the tangent space of $\mathcal{W}^k$ converges to $E$, there exist vector fields $Y^k_i$ inside the tangent space that converges to each $X_i$.
But $Y^k_i = \sum_{j=1}^n a^k_{ij} X^k_j$ and by the form of $X_i$ we see that $a^k_{ii}\rightarrow 1$ while $a^k_{ij}\rightarrow 0$
for $j \neq i$ as $k\rightarrow \infty$. And so in particular $|X^k_i - Y^k_i|_{\infty} \rightarrow 0$ as $k\rightarrow \infty$ which implies that $X^k _i$ converges to $X_i$.
\end{proof}

\section{Uniqueness of Local Integral Manifolds}\label{subsec-uniqueness}

In the previous section we have proven that asymptotic involutivity implies integrability of $E$. In this section we
assume that $E$ is integrable and show that if
$E$ is exterior regular then it is uniquely integrable which will
then conclude the proof of the Main Theorem.
We remind that uniquely integrable means the following:
When ever two integral manifolds of $E$ intersect, they intersect in a relatively open (in both) set. First we reduce the question of unique integrability to the following:


\begin{prop}\label{thm-Hartmanunique}
Assume $E$ is integrable and  exterior regular then $E$ is spanned by a
linearly independent set of vector fields $X_i$ which are uniquely integrable.
\end{prop}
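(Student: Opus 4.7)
In local coordinates $(x^1,\ldots,x^m,y^1,\ldots,y^n)$ adapted to $E$ so that $E$ is transverse to $\mathcal{Y} := \text{span}\{\partial/\partial y^j\}$, the natural spanning vector fields are
$$X_i = \frac{\partial}{\partial x^i} + \sum_{j=1}^n a_{ij}(x,y)\frac{\partial}{\partial y^j},\qquad i=1,\ldots,m,$$
with continuous coefficients $a_{ij}$. These are manifestly linearly independent and span $E$. Since each $X_i$ is merely continuous, Peano's theorem produces integral curves through every point, so the entire task reduces to proving that each such $X_i$ is \emph{uniquely} integrable.

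Fix $i$ and argue by contradiction along the lines sketched in Subsection~\ref{sec:phil}. Suppose two distinct integral curves $\gamma_1,\gamma_2$ of $X_i$ begin at the same point $p$ and, after time $T$, reach endpoints separated by a distance $\delta>0$ in some transverse direction $Y\in\mathcal{Y}$. Close them with a short segment $\lambda$ along $Y$ to form a piecewise-$C^1$ closed curve $\gamma=\gamma_1*\lambda*\gamma_2^{-1}$ bounding a Lipschitz surface $\Sigma$, which I parameterize as $\Phi(t,s)=e^{(T-t)X_i^k}(\lambda(s))$ using the $C^1$ approximating vector fields $X_i^k\in E^k$; by Lemma~\ref{lem-inv} this flow preserves $\mathcal{Y}$, so $\partial_s\Phi\in\mathcal{Y}$ throughout. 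For each $k$ and each form $\beta^{k,\cdot}_j$ from exterior regularity, apply Stokes' theorem on the pieces of $\Sigma^k$ lying inside individual charts of the compatible cover $\{U^{k,i}\}$,
$$\int_{\gamma^k}\beta^{k,\cdot}_j = \int_{\Sigma^k} d\beta^{k,\cdot}_j,$$
where $\gamma^k,\Sigma^k$ are the $k$-dependent replacements obtained from the $X_i^k$-flow and converge to $\gamma,\Sigma$ as $k\to\infty$.

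To extract the contradiction, multiply by $\|B^{-k,\cdot}\|_\infty$ and maximize over $j$. The two $X_i^k$-curve pieces of the left side contribute at most $T\,\|B^{k,\cdot}|_{E}\|_\infty\,\|B^{-k,\cdot}\|_\infty\,|X_i|_\infty$, which vanishes in the limit by the exterior regularity hypothesis (the exponential factor $e^{\epsilon M_B}$ being bounded below by $1$). The transverse piece $\lambda$, on the other hand, contributes in absolute value at least $\delta>0$ after this rescaling, since $B^{k,\cdot}|_{\mathcal{Y}}$ is invertible with inverse bounded by $\|B^{-k,\cdot}\|_\infty$. For the right side, the tangent frame of $\Sigma^k$ is $(X_i^k, De^{(T-t)X_i^k}Y)$, and applying the same pushforward machinery as in the proof of Proposition~\ref{prop-boundedflow} bounds the integrand pointwise by a multiple of $M^{k,\cdot}_B\,|B^{k,\cdot}(Y)|\,e^{T M^{k,\cdot}_B}$; after multiplying by $\|B^{-k,\cdot}\|_\infty$ this yields a total right-hand bound of order $T\delta\,\|B^{k,\cdot}|_E\|_\infty\,\|B^{-k,\cdot}\|_\infty\,e^{T M^{k,\cdot}_B}$, which tends to zero by exterior regularity once $T\leq\epsilon$. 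The surviving lower bound $\delta$ on the left then forces the contradiction.

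The main obstacle will be the bookkeeping across the compatible cover: the surface $\Sigma$ need not lie inside a single chart $U^{k,i}$, so the global bound on $\int_{\Sigma^k} d\beta^{k,\cdot}_j$ must be assembled from chart-wise bounds using the compatibility relation $\|A^{k,i}\circ A^{-k,j}\|=1$, exactly as in the proof of Proposition~\ref{prop-boundedflow}, and this is precisely where the product of norms appearing in the exterior regularity hypothesis naturally enters. A secondary delicate point is calibrating the flow time $T$ (which controls the size of $\Sigma$) against the exponent $\epsilon$ so that every appearance of $e^{T M^{k,\cdot}_B}$ is absorbed, forcing us to work in neighborhoods of $p$ whose diameter is dictated by the $\epsilon$ furnished by the hypothesis.
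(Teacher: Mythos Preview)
Your argument has a genuine gap in the surface-integral estimate. You claim that the integrand $d\beta^{k,\cdot}_j(X_i^k,\,De^{sX_i^k}Y)$, after multiplying by $\|B^{-k}\|_\infty$ and integrating, is of order $T\delta\,\|B^{k}|_{E}\|_\infty\,\|B^{-k}\|_\infty\,e^{T M^{k}_{B}}$. But the pointwise bound you correctly obtain is $M^{k}_{B}\,|B^{k}(Y)|\,e^{T M^{k}_{B}}$ with $Y\in\mathcal Y$; there is no mechanism that converts $M^{k}_{B}\,|B^{k}(Y)|$ into $\|B^{k}|_{E}\|$. The first factor involves a derivative of $B^{k}$ and the second involves $B^{k}$ on the \emph{transverse} space $\mathcal Y$, whereas $\|B^{k}|_{E}\|$ is small precisely because $E^{k}\to E$. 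After your normalization the surface term is therefore of order $(e^{T M^{k}_{B}}-1)$, which need not tend to zero since exterior regularity places no bound on $M^{k}_{B}$ alone. There is also a mismatch between your surface and your boundary: $\Sigma^{k}=\Phi([0,T]\times[0,\delta])$ swept out by the $X_i^{k}$-flow is bounded by $\lambda$, two $X_i^{k}$-integral curves (on which $\beta^{k}$ vanishes \emph{exactly}, not merely by $\|B^{k}|_{E}\|$), and a fourth curve $\mu^{k}$ at the far end that is \emph{not} the point $p$ and does not collapse as $k\to\infty$. So Stokes on $\Sigma^{k}$ never sees $\gamma_{1},\gamma_{2}$, and Stokes on a fixed surface bounded by $\gamma$ cannot use your $X_i^{k}$-flow parametrization.

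The paper resolves both issues by not using the $\beta^{k}$'s on the surface at all. Instead it builds, for each $k$, a \emph{closed} $1$-form $\alpha^{k}=(\psi_{k}^{-1})^{*}dz^{i_{0}}$, where $\psi_{k}$ is the flow-box diffeomorphism of $X_i^{k}$; then $d\alpha^{k}=0$ identically and the surface integral vanishes for \emph{any} surface bounded by $\gamma_{1},\gamma_{2},\lambda$. The only terms to control are $\int_{\gamma_{\ell}}\alpha^{k}=\int\alpha^{k}(X_i)=-\int\alpha^{k}(X_i^{k}-X_i)$. Since $X_i^{k}-X_i\in\mathcal Y$, one has $\alpha^{k}(X_i^{k}-X_i)=dz^{i_{0}}(D\psi_{k}^{-1}(X_i^{k}-X_i))$, and $D\psi_{k}^{-1}$ restricted to $\mathcal Y$ is essentially $De^{sX_i^{k}}$; Proposition~\ref{prop-boundedflow} then gives $|De^{sX_i^{k}}(X_i^{k}-X_i)|\le |B^{k}(X_i^{k}-X_i)|\,\|B^{-k}\|\,e^{sM^{k}_{B}}$. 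Now $B^{k}(X_i^{k})=0$ and $X_i\in E$, so $|B^{k}(X_i^{k}-X_i)|\le\|B^{k}|_{E}\|\,|X_i|$ --- \emph{this} is where $\|B^{k}|_{E}\|$ legitimately enters, and exterior regularity finishes the job. The missing idea in your plan is precisely this passage to closed approximating forms via the flow-box construction.
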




This immediately implies uniqueness for the integral manifolds.
\begin{proof}[Proof of Uniqueness part of Main Theorem assuming Proposition \ref{thm-Hartmanunique}]
Assume that there exist two integral manifolds $\mathcal{W}_1,\mathcal{W}_2$ of $E$ such that $z \in \mathcal{W}_1 \cap
\mathcal{W}_2$.
By assumption we have that $E$ is spanned by
uniquely
integrable vector fields $X_i$. Moreover since $\mathcal{W}_j$ for $j=1,2$ are integral manifolds of $E$, for any $q_j \in \mathcal{W}_j$
$X_i(q_j) \in T_{q_j}\mathcal{W}_j$ for all $i$ and $j$. Now for
$\epsilon$  small enough the $m$-dimensional surface \(\mathcal{W}=\{e^{t_mX_m}\circ ...\circ e^{t_1X_1}(z): |t_i| \leq \epsilon\}$
is well defined.
Moreover by unique integrability of $X_i$ restricted to each $\mathcal{W}_j$, \(\mathcal{W}\) is a subset of both surfaces. This means that the
intersection of $\mathcal{W}_1$ with $\mathcal{W}_2$ is relatively open in both surfaces.
\end{proof}

To prove Proposition \ref{thm-Hartmanunique} we define quite explicitly the linearly independent vector fields \(  X_{i}  \) which span \(  E  \) and show that they are uniquely integrable. We first introduce some notation which we will need for the argument.  Note that the assumption of exterior regularity means that there exists a sequence of approximations $E^k$ of $E$ defined on \(U\) and for each $k>0$,
there exists a covering $\{U^k_i\}_{i=1}^{n_k}$ of $U$,
a basis of sections $\{\beta^{k,i}_j\}_{j=1}^n$ of $\mathcal{A}^1(E^k)$
defined on each $U^k_i$ and the section $B^{k,i}$ of $F(\mathcal{A}^1(E^k))$ formed by these differential 1-forms.
As in the previous sections we can find $\{X^k_i\}_{i=1}^m$ a basis of vector
fields which span $E^k$ such that they converge to $\{X_i\}_{i=1}^m$ which is a basis of vector fields for $E$ such that they have the form
$$
X^k_i= \frac{\partial}{\partial x^i} + \sum_{j=1}^{n}b^{i,k}_j\frac{\partial}{\partial y^{j}}
\quad\text{ and } \quad
X_i= \frac{\partial}{\partial x^i} + \sum_{j=1}^{n}b^{i}_j\frac{\partial}{\partial y^{j}}.
$$
Note that these approximations $E^k$ and $X^k_i$ may be different from the ones used in the previous sections.
Let $i\in\{1,...,m\}$ and $x_0 \in U$ and consider integral curves of $X_i$ passing
through $x_0=(x^1_0,...,x^m_0,$ $y^1_0,...,y^n_0)$. Due to the form of $X_i$, any integral curve $\gamma(t)$ can be written as
$$
\gamma(t) =  (x^1_0,...,x^i_0+t,...,x^m_0, y^1(t),...,y^n(t))
$$
where $y^j(t)$ are differentiable functions in $t$. In particular if an integral curve passes through the point $x_0$
then it necessarily always remains inside the $n+1$ dimensional plane  $P_i=\{x^j = x^j_0 \ \text{for} \ j \neq i\}, x_0 \in P_i$ passing
through $x_0$. Therefore it is sufficient just to prove uniqueness restricting to each such subspace. So given such an $x_0$ and $P_i$ we restrict $X^k_i, X_i, \{\beta^{k,i}_j\}_{j=1}^n, U^{k,i}, B^{k,i}$ to $V_i = U \cap P_i$ with coordinates
$(x^i,y^1,...,y^n)$. For simplicity we will omit the index $i$. Note that $\{\beta^{k}_j\}_{j=1}^{n}$ are all non-vanishing and linearly independent and $X^k_i$ is in the kernel of $\beta^{k}_j$'s.  Moreover $B^{k}$ and $X^k$ restricted to these subspaces still satisfy the exterior regularity conditions.

\subsection{A Condition for Unique Integrability of \(  X  \)}
We can now start the proof of the proposition. By contradiction, we assume $X$ admits two integral curves $\gamma_1(t),
\gamma_2(t) \subset U$ with $0\leq t \leq t_1$ which intersect but whose intersection is not relatively open. Under this assumption, without
loss of generality
we can assume that $\gamma_1(0)=\gamma_2(0)=x_0$ for some $x_0\in V_i$ and that $\gamma_1(t)\neq \gamma_2(t)$ for $0 < t \leq
t_1$. Notice that for $0\leq t\leq t_1$, $\gamma_1(t)$ and $\gamma_2(t)$ have the form
$$
\gamma_1(t) =  (t, y^1_1(t),...,y^n_1(t))
\quad \text{ and } \quad
\gamma_2(t) =  (t, y^1_2(t),...,y^n_2(t))
$$
and so in particular the end points $\gamma_1(t_1)$, $\gamma_2(t_1)$ have the same $x$ coordinate. Therefore
they can be connected to each other by a straight line segment of the form $\lambda(t) =\gamma_1(t_1)+vt$ which lies inside the plane $\mathcal{Y}$ that passes
through $(t_1,0,...,0)$. Here $v$ is the unit vector in the direction $(0,y^1_2(t_1)-y^1_1(t_1),...,y^n_2(t_1)-y^n_1(t_1))$. Let $\ell(\cdot)$ denote the length. We will show that $\ell(\lambda)>0$ leads to a contradiction thus proving the proposition.

We first prove a lemma which gives sufficient conditions, in terms of the existence of a family of differential forms, to ensure $\ell(\lambda)=0$. Then in the following sections we will show that such a family can be constructed.

\begin{lem}\label{lem-difprop}
    Assume $\{\alpha^k\}_{k}$ is a sequence of $C^1$ differential forms  defined on some domain $ \tilde{V}\subset
V$ containing the curves $\gamma_1,\gamma_2,\lambda$, a constant $c>0$ such that for every $k$:
\begin{align}
&1. \ \alpha^k(X^k)=0\label{ker}\\
&2. \ d\alpha^k=0 \label{eq-dif1} \\
&3. \ \min_t{\alpha^k(\dot{\lambda}(t))}\geq c  \label{eq-dif2}  \\
&4. \ \lim_{k \rightarrow \infty}|\alpha^k(X^k-X)|_{\infty}=0 \label{eq-dif3}
\end{align}
Then $\ell(\lambda)=0$.
\end{lem}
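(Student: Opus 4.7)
The plan is to use Stokes' theorem on the closed curve $\gamma := \gamma_1 * \lambda * (-\gamma_2)$ formed by the two integral curves of $X$ together with the transverse segment $\lambda$ connecting their endpoints. Since $\tilde V$ can be assumed to be simply connected (otherwise shrink it to a ball around the piece of $\gamma$, which still contains all three curves for $t_{1}$ small), each $\alpha^{k}$ is closed on $\tilde V$ and hence the integral along any closed loop vanishes. Thus, for every $k$,
\begin{equation*}
0=\int_{\gamma}\alpha^{k}=\int_{\gamma_{1}}\alpha^{k}+\int_{\lambda}\alpha^{k}-\int_{\gamma_{2}}\alpha^{k}.
\end{equation*}

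Next I would estimate each of the three integrals. Because $\gamma_{j}$ is an integral curve of $X$, one has $\dot\gamma_{j}(t)=X(\gamma_{j}(t))$, and writing $X=X^{k}+(X-X^{k})$ together with assumption \eqref{ker} gives
\begin{equation*}
\left|\int_{\gamma_{j}}\alpha^{k}\right|=\left|\int_{0}^{t_{1}}\alpha^{k}(X-X^{k})(\gamma_{j}(t))\,dt\right|\leq t_{1}\,|\alpha^{k}(X^{k}-X)|_{\infty},
\end{equation*}
which tends to $0$ as $k\to\infty$ by assumption \eqref{eq-dif3}. For the transverse segment, parametrize $\lambda$ by arc length so $|\dot\lambda|=1$ and $t$ runs over $[0,\ell(\lambda)]$; then assumption \eqref{eq-dif2} directly gives
\begin{equation*}
\int_{\lambda}\alpha^{k}=\int_{0}^{\ell(\lambda)}\alpha^{k}(\dot\lambda(t))\,dt\geq c\,\ell(\lambda).
\end{equation*}

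Combining the two estimates with the vanishing of $\int_{\gamma}\alpha^{k}$ yields
\begin{equation*}
c\,\ell(\lambda)\leq\left|\int_{\gamma_{1}}\alpha^{k}\right|+\left|\int_{\gamma_{2}}\alpha^{k}\right|\leq 2t_{1}\,|\alpha^{k}(X^{k}-X)|_{\infty}\xrightarrow[k\to\infty]{}0,
\end{equation*}
forcing $\ell(\lambda)=0$. The only real subtlety is the justification of Stokes' theorem, i.e.\ that $\gamma$ bounds a smooth oriented surface inside $\tilde V$ on which the closed form $\alpha^{k}$ integrates to zero; this is where simple connectedness (equivalently, the Poincar\'e lemma writing $\alpha^{k}=df^{k}$) is used, and it is harmless since the whole construction is local. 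The rest of the argument is a clean three-term balance where the contributions along the integral curves of $X$ are made negligible by the approximate kernel condition \eqref{ker}--\eqref{eq-dif3}, while the transverse contribution is bounded below by $c\,\ell(\lambda)$.
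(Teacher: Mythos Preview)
Your proof is correct and follows essentially the same approach as the paper: form the closed loop $\gamma_1 * \lambda * (-\gamma_2)$, use that $d\alpha^k=0$ (the paper applies Stokes over a spanning surface $S$, you equivalently invoke simple connectedness/Poincar\'e), then balance the lower bound $\int_\lambda \alpha^k \geq c\,\ell(\lambda)$ against the vanishing contributions $|\int_{\gamma_j}\alpha^k|\leq t_1|\alpha^k(X^k-X)|_\infty$ and let $k\to\infty$.
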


\begin{center}
\begin{figure}
  \centering
  \includegraphics[width=40mm]{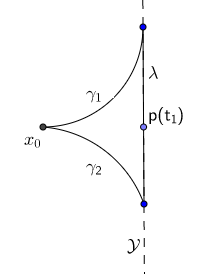}
  \caption{}\label{fignonu}
\end{figure}
\end{center}

\begin{proof}
Let $S$ be a surface in $\tilde{V}$ bounded by the curves $\gamma_1,\gamma_2, \lambda$ (whose union forms a simple, closed, piecewise smooth curve, see Figure \ref{fignonu}). By Stoke's Theorem we have
$$
\int_{\gamma_1}\alpha^k + \int_{\lambda}\alpha^k - \int_{\gamma_2}\alpha^k = \int_{S}d\alpha^k
$$
and so
$$
|\int_{\lambda}\alpha^k|\leq |\int_{\gamma_2}\alpha^k -   \int_{\gamma_1}\alpha^k + \int_{S}d\alpha^k|.
$$
Since $|\int_{\lambda}\alpha^k| \geq \min_t{\alpha^k(\dot{\lambda}(t))}\ell(\lambda)$ we can write this as
\begin{equation}\label{eq-stokesin}\min_t{\alpha^k(\dot{\lambda}(t))}\ell(\lambda) \leq |\int_{\gamma_2}\alpha^k|  +
|\int_{\gamma_1}\alpha^k| + |\int_{S}d\alpha^k|\end{equation}
Using Equations \eqref{eq-dif1}, \eqref{eq-dif2} and \eqref{eq-stokesin} we have

\begin{equation}\label{eq-lem2}
\ell(\lambda) \leq \frac{1}{c}(|\int_{\gamma_2}\alpha^k|  + |
\int_{\gamma_1}\alpha^k|)
\end{equation}
and using Equation \eqref{ker} and $\dot{\gamma_{1}}(t) = X(\gamma_1(t))$ we get
$$
 |\int_{\gamma_1}\alpha^k|  = |\int_{0}^{t_1}\alpha^k(X)(\gamma_1(s))ds| = |\int_{0}^{t_1}\alpha^k(X^k-X)(\gamma_1(s))ds|
 $$
which implies
\begin{equation}\label{eq-lem1}|\int_{\gamma_1}\alpha^k|  \leq 2t_1|\alpha^k(X^k-X)|_{\infty}.\end{equation}
The same applies to $\gamma_2$. Then plugging Equation \eqref{eq-lem1} into \eqref{eq-lem2}, we get that for all~$k$
$$
\ell(\lambda)\leq\frac{1}{c}|\alpha^k(X^k-X)|_{\infty}
$$
which, due to  \eqref{eq-dif3}, goes to $0$ as $k$ goes to $\infty$.
\end{proof}

\subsection{Definition of $\alpha^k$}

To construct $\alpha^k$ satisfying conditions of Lemma \ref{lem-difprop}, we are going to define a change of coordinates that straightens flow of each $X^k$. Since the image of the flow is a straight lines of the form $\frac{\partial}{\partial t}$, then it will also be nullified by constant differential forms of the form $dz^j$. Pulling back these constant differential forms will give us the required differential forms $\alpha^k$(see Figure \ref{figst}).  Now we make this more precise.

Let $\epsilon_1>0$ be small enough such that the box $U_1=(-\epsilon_1,\epsilon_1)^{n+1}$ centered at $0$ is in $V$. Let $\epsilon_2
< \epsilon_1$ be small enough so that for $U_2=(-\epsilon_2,\epsilon_2)^{n+1}$, $e^{tX^k}(U_2) \subset U_1$ for all $|t|\leq \epsilon_2$.
We decrease $t_1$ if necessary so that $\gamma_{\ell}$ for $\ell=1,2$ are in $U_{2}$ and $t_1 < \epsilon_2$ and denote the line
$p(t) =  (t,0,
0,...,0)$. Given some $t \leq \epsilon_2$, we denote the subspaces:
$$
P_t=\{x=t\} \cap U_{2} \simeq [{\epsilon_2},{\epsilon_2}]^n
\quad\text{ and }\quad
D^k_{\epsilon_2} = \bigcup_{0\leq t< \epsilon_2}e^{tX^k}(P_{t_1}).
$$
The domains  $D^k_{\epsilon_2}$ will be the domains on which we will define the forms $\alpha^k$. The assumptions of
of Lemma \ref{lem-difprop} require that they should contain the curves $\gamma_{1},\gamma_{2}, \lambda$. This is proven in the next lemma.

\begin{lem} There exists an open subset $\tilde{V} \subset \cap_{k=1}^{\infty} D^k_{\epsilon_2}$ such that for $t_1$ small enough, it contains the curves
$\gamma_1,\gamma_2$ and $\lambda$.
\end{lem}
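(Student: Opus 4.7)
The plan is to verify directly that each of the three curves $\gamma_1, \gamma_2, \lambda$ lies in every $D^k_{\epsilon_2}$ by a sup-norm estimate on the $X^k$-flows, and then to obtain an open neighborhood by observing that this estimate is uniform in $k$.

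First, the segment $\lambda$ lies in $P_{t_1}$ by construction, since its endpoints $\gamma_1(t_1),\gamma_2(t_1)$ both have $x$-coordinate $t_1$ and $\lambda$ is a straight line in the $\mathcal{Y}$-directions; for $t_1$ small the segment stays inside $U_2$, hence inside $P_{t_1}$. Since $P_{t_1}$ appears as the $t=0$ slice of the parameterization of $D^k_{\epsilon_2}$, the segment $\lambda$ is in each $D^k_{\epsilon_2}$.

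Second, for $\gamma_j(s)=(s,y_j^1(s),\dots,y_j^n(s))$ with $s\in[0,t_1]$, I would produce an explicit witness $q^k_s\in P_{t_1}$ expressing $\gamma_j(s)$ as the $X^k$-flow of $q^k_s$ by an appropriate time $t_1-s$. Define $q^k_s$ as the image of $\gamma_j(s)$ under the $X^k$-flow for time $t_1-s$ (the direction being determined by the convention used in the definition of $D^k_{\epsilon_2}$). Since the $x$-component of $X^k$ is identically $\partial/\partial x^i$, the $x$-coordinate of $q^k_s$ equals $t_1$ automatically. The $y$-displacement of the flow is bounded by $(t_1-s)C$, where $C:=\sup_k\|X^k\|_\infty<\infty$ (finite because $X^k\to X$ uniformly on the compact $\overline{U_1}$ and $X$ is continuous). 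Combined with the continuity bound $|y_j^\ell(s)|<\epsilon_2/2$, which holds on $[0,t_1]$ for $t_1$ small since $\gamma_j(0)=x_0=0$, choosing $t_1<\epsilon_2/(2C)$ forces $q^k_s\in P_{t_1}$, giving $\gamma_j(s)\in D^k_{\epsilon_2}$ for every $k$.

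For the openness, the same integral estimate applies to any starting point $p$ with $y$-coordinates within $\delta$ of those of a point on $\gamma_j$ or $\lambda$: the landing point has $y$-coordinates bounded by $|y_j^\ell(s)|+\delta+Ct_1$, and for $\delta,t_1$ chosen small this stays below $\epsilon_2$, yielding $p\in D^k_{\epsilon_2}$ uniformly in $k$. I would take $\tilde V$ to be such a $\delta$-tubular neighborhood (in the $y$-directions) of the compact set $\gamma_1\cup\gamma_2\cup\lambda$; this is open in $V$ and contained in $\bigcap_{k=1}^\infty D^k_{\epsilon_2}$.

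The main conceptual obstacle is the required uniformity in $k$. A Gronwall-type estimate would express the Lipschitz constant of $e^{tX^k}$ in terms of $\|DX^k\|_\infty$, which can blow up as $k\to\infty$ since $X$ is only continuous (and indeed non-unique integrability is exactly what we are trying to contradict). The key observation making the argument robust is that the $y$-displacement of the flow of a \emph{fixed} point depends only on the $C^0$-norm $\|X^k\|_\infty$, not on derivative norms, and $\sup_k\|X^k\|_\infty$ is finite by uniform $C^0$ convergence alone.
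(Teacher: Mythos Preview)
Your argument is correct and follows essentially the same route as the paper: both hinge on the single observation that the $y$-displacement under $e^{tX^k}$ is controlled by $\sup_k\|X^k\|_\infty$ alone, which is finite by uniform convergence, so no derivative bound is needed. The paper packages this as ``$e^{sX^k}(P_{t_1})$ contains a fixed box of side $\epsilon_2/2$ around the axis $p(t)$ for all $|s|\le t_1$'' and takes that box as $\tilde V$, whereas you produce an explicit preimage $q^k_s\in P_{t_1}$ for each point on $\gamma_j$ and then thicken to a tubular neighborhood; these are cosmetic variants of the same estimate.
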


\begin{proof}Fix $\delta \leq {\epsilon_{2}}/{2}$. Choose $t_1$ small enough so that for $\ell=1,2$,
$$
d(\gamma_{\ell}(t),p(t)) = |(0,y_{\ell}^{1}(t),...,y^n_{\ell}(t))| \leq \delta
$$
for all $0 \leq t \leq t_1$ and that $e^{sX^k}(P_{t_1})\cap  V_{2}$  contains a box $[-\frac{\epsilon_2}{2},\frac{\epsilon_2}{2}]^n \subset
P_{t_1+s}$ centered at $p(t_1+s)$ for all $|s| \leq t_1$. These conditions are possible to obtain since the $X^k$ are uniformly bounded in norm which guarantees that $\lambda$ and $\gamma_{\ell}([0,t_1])$ are in $D^k_{\epsilon_2}$ and that $D^k_{\epsilon_2}$ all contain a uniformly sized
box centered at the axis $y=0$ (see Figure~\ref{figdomains}).
\end{proof}

\begin{figure}[h]
  \centering
  \includegraphics[width=70mm]{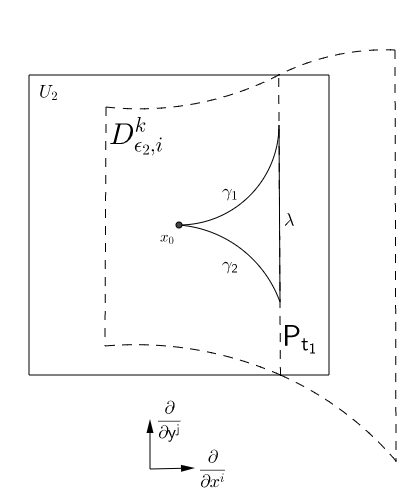}
  \caption{The domains}\label{figdomains}
\end{figure}

Note that each $P_t$ is a codimension one subspace of $U_2$. Since $P_{t_1}$ is transverse to $X^{k}$, $D^k_{\epsilon_2}$ is an $n+1$
dimensional open subset of $U_1$. Let $\phi: [-\epsilon_2,\epsilon_2]^n \rightarrow P_{t_1}$ be a parametrization of $P_{t_1}$ with coordinate representation
$\phi(z^1,...,z^n)$.
We can assume that $D\phi \frac{\partial}{\partial z^i} = \frac{\partial}{\partial y^i}$ for all $i=1,...,n$.
Then we define the change
of coordinates :
$$
\psi_{k}: [-\epsilon_2,\epsilon_2]^{n+1} \rightarrow D^k_{\epsilon_2}  \subset U_1
$$
by
$$
\psi_{k}(t,z^1,...,z^m)= e^{tX^k}(\phi(z^1,...,z^m)).
$$
This simply takes points of $P_{t_1}$ and flows them by an amount equal to $t$.

\begin{lem}
The maps $\psi_{k}$ are diffeomorphisms onto their image.
\end{lem}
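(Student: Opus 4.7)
The plan is to verify the three standard ingredients that together guarantee the $\psi_k$ are diffeomorphisms onto their image: smoothness, non-degeneracy of the differential, and injectivity.

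For smoothness, $\psi_k$ is a composition of the $C^1$ parametrization $\phi$ with the time-$t$ flow of the $C^1$ vector field $X^k$, and the flow is jointly $C^1$ in $(t,p)$, so $\psi_k$ is $C^1$ on $[-\epsilon_2,\epsilon_2]^{n+1}$.

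For the differential, I would compute directly
\[
\frac{\partial\psi_k}{\partial t}(t,z)=X^k(\psi_k(t,z)),\qquad
\frac{\partial\psi_k}{\partial z^i}(t,z)=De^{tX^k}_{\phi(z)}\!\Bigl(\tfrac{\partial}{\partial y^i}\Bigr),
\]
using that $D\phi(\partial/\partial z^i)=\partial/\partial y^i$. By Lemma \ref{lem-inv} each $\partial\psi_k/\partial z^i$ lies in $\mathcal{Y}_{\psi_k(t,z)}$, and the argument given in the proof of that lemma (the block structure of $De^{tX^k}$ with an invertible lower-right $B$-block) shows that the restriction $De^{tX^k}|_{\mathcal{Y}}$ is an isomorphism onto $\mathcal{Y}$, so the $n$ vectors $\partial\psi_k/\partial z^1,\dots,\partial\psi_k/\partial z^n$ form a basis of $\mathcal{Y}_{\psi_k(t,z)}$. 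Since $X^k=\partial/\partial x+\sum b^{k}_j\partial/\partial y^j$ is transverse to $\mathcal{Y}$, appending $\partial\psi_k/\partial t=X^k$ yields a basis of $\mathbb{R}^{n+1}$; hence $D\psi_k$ is everywhere invertible.

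For injectivity, I would exploit the very special form of $X^k$. Since the $x$-component of $X^k$ is identically $1$, the $x$-coordinate of $\psi_k(t,z)$ is $t_1+t$. Therefore if $\psi_k(t,z)=\psi_k(t',z')$ the $x$-coordinates force $t=t'$, and then injectivity of the flow $e^{tX^k}$ combined with injectivity of $\phi$ forces $z=z'$. Thus $\psi_k$ is a $C^1$ bijection onto its image with invertible differential, so by the inverse function theorem it is a diffeomorphism onto $D^k_{\epsilon_2}=\psi_k([-\epsilon_2,\epsilon_2]^{n+1})$. The only place one has to be slightly careful is ensuring the whole argument works uniformly on the chosen box; this is arranged by the earlier choices of $\epsilon_1,\epsilon_2$ which keep every integral curve of $X^k$ from $P_{t_1}$ inside $U_1$ for $|t|\le\epsilon_2$. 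No genuine obstacle arises here; the proof is essentially a packaging of properties already established for the flows and the coordinate system.
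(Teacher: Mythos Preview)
Your proof is correct and follows essentially the same approach as the paper: you compute the columns of $D\psi_k$, use the invariance of $\mathcal{Y}$ under $De^{tX^k}$ together with transversality of $X^k$ to $\mathcal{Y}$ for the local diffeomorphism part, and exploit the special form of $X^k$ for injectivity. Your injectivity argument is in fact slightly more direct than the paper's (which argues by contradiction, concatenating integral curves and invoking uniqueness of solutions before using monotonicity of the $x$-coordinate), since you simply read off $t=t'$ from the $x$-coordinate $t_1+t$ and then use injectivity of the time-$t$ flow and of $\phi$.
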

\begin{proof} We will show that these maps are diffeomorphisms by showing that they are local diffeomorphisms and that they are injective.
To show that it is a local diffeomorphism, it is enough to show that the columns of $D\psi_{k}$ are everywhere linearly independent.
One of the columns is:
$$
\frac{\partial \psi_{k}}{\partial t} = X^k
$$
while the others are of the form
$$
\frac{\partial \psi_{k}}{\partial \tilde{y}^i} = De^{tX^k}\frac{\partial}{\partial y^i}.
$$
By Lemma  \ref{lem-inv}, for  $t$ small enough, $De^{tX^k}$ preserves $\mathcal{Y}$ and $\mathcal{Y}$ is transverse to $X^k$. Therefore
$D\psi^k$ is invertible at every point and therefore is a local diffeomorphism. So it remains to show it is injective. If it is not injective
then there exists two integral curves $\xi_1, \xi_2$ that start at $P_{t_1}$ and intersect at their final point. By uniqueness of solutions,
this means that $\xi_1 \circ \xi_2^{-1}$ is an integral curve of $X^k$ that starts at $P_{t_1}$ and comes back to $P_{t_1}$ (see Figure
\ref{figinj}). This either means that the $x$ component of $\xi_1 \circ \xi_2^{-1}$ first increases and then decreases or first decreases then
increases. Neither is possible due to the form of the vector fields $X^k = \frac{\partial}{\partial x} + ...$ which implies that the $x$ component is monotone.
\end{proof}

\begin{figure}[h]
  \centering
  \includegraphics[width=60mm]{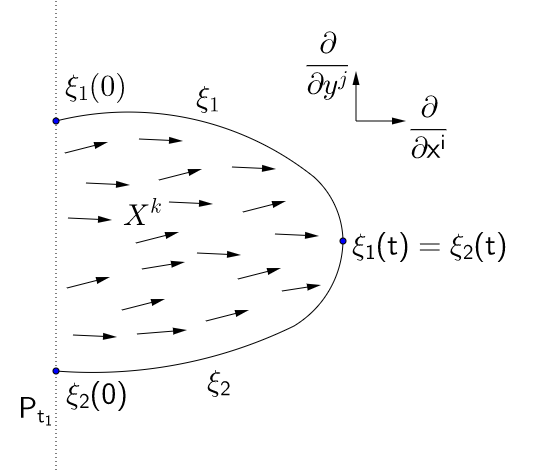}
  \caption{Injectivity}\label{figinj}
\end{figure}

Now we are ready to define $\alpha_{k}$. First, for every
 \(j=1,...,n\), let
$$
\alpha^k_j = (\psi^{-1}_{k})^*dz^j.
$$
In the next subsection, we will show that for some fixed $i_{0}$, the differential forms $\{\alpha^k_{i_{0}}\}$ are the required differential forms.

\begin{figure}[h]
  \centering
  \includegraphics[width=120mm]{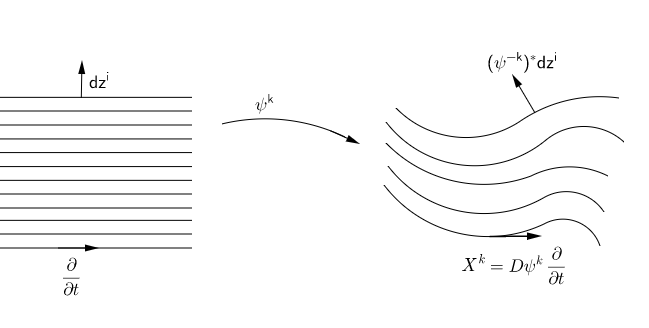}
  \caption{}\label{figst}
\end{figure}

\subsection{Choosing \(\alpha^k_{i_{0}}\)}

\begin{lem}\label{lem-renormal}
For some choice of $i_0$, the differential 1-forms $\alpha^k_{i_0}$ satisfy the conditions given in
Lemma \ref{lem-difprop}. \end{lem}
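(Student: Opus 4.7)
The plan is to verify, for an appropriately chosen index $i_{0}$, each of the four conditions of Lemma \ref{lem-difprop} for $\alpha^k_{i_{0}}$. The first three are essentially immediate from the construction of $\psi_k$. Conditions \eqref{ker} and \eqref{eq-dif1} follow because $\psi_{k}$ straightens the flow of $X^{k}$, so $D\psi_{k}^{-1}(X^{k})=\partial/\partial t$ and hence $\alpha^k_j(X^k)=dz^{j}(\partial/\partial t)=0$, while $d\alpha^k_j=d(\psi_{k}^{-1})^{*}dz^{j}=(\psi_{k}^{-1})^{*}d(dz^{j})=0$. For condition \eqref{eq-dif2}, using that $\psi_{k}$ reduces to $\phi$ on the slice $\{t=0\}=P_{t_{1}}$ and that $D\phi(\partial/\partial z^{i})=\partial/\partial y^{i}$, a direct computation shows that for any $v=\sum_{j}v^{j}\partial/\partial y^{j}\in\mathcal{Y}_{\phi(z)}$, one has $\alpha^k_r(v)=v^{r}$ \emph{independently of $k$}. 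Since $\dot\lambda$ is a nonzero constant vector in $\mathcal{Y}$, I can choose $i_{0}$ with $(\dot\lambda)^{i_{0}}\neq 0$ and, after possibly replacing $\alpha^k_{i_{0}}$ by $-\alpha^k_{i_{0}}$, obtain $\alpha^k_{i_{0}}(\dot\lambda(t))\geq c:=|(\dot\lambda)^{i_{0}}|>0$.

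The main work is condition \eqref{eq-dif3}. The key geometric observation is that for any $v\in\mathcal{Y}_{p}$ at $p=\psi_{k}(t,z)$, the identity
\[
\alpha^k_r|_{p}(v) \;=\; \bigl((De^{-tX^{k}})_{p}\,v\bigr)^{r}
\]
holds, which follows directly from $\psi_k$ straightening the flow of $X^{k}$ together with $D\phi(\partial/\partial z^{i})=\partial/\partial y^{i}$. Since both $X$ and $X^{k}$ have leading coefficient $1$ in front of $\partial/\partial x$, the difference $X-X^{k}$ lies in $\mathcal{Y}$, and I can apply the identity above together with Proposition \ref{prop-boundedflow} (for the backward flow of $X^{k}$, with $Y=(X-X^{k})(p)$) to obtain
\[
\bigl|\alpha^k_{i_{0}}\bigl((X-X^{k})(p)\bigr)\bigr|\;\leq\;\bigl|(De^{-tX^{k}})_{p}(X-X^{k})(p)\bigr|\;\leq\;\bigl|B^{k}_{p}\bigl((X-X^{k})(p)\bigr)\bigr|\cdot\|B^{-k}\|_{\infty}\cdot e^{\epsilon_{2}M^{k}_{B}}.
\]
The crucial cancellation $B^{k}(X-X^{k})=B^{k}(X)$ (since $B^{k}(X^{k})=0$) now gives $|B^{k}_{p}((X-X^{k})(p))|\leq \|B^{k}|_{E}\|_{\infty}\cdot |X|_{\infty}$, so the whole bound is controlled by $\|B^{k}|_{E}\|_{\infty}\|B^{-k}\|_{\infty}e^{\epsilon_{2}M^{k}_{B}}$, which tends to zero by the exterior regularity hypothesis (upon choosing $\epsilon_{2}\leq\epsilon$).

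The main obstacle is producing a bound with just a single factor of $\|B^{-k}\|_{\infty}$, to match the exterior regularity quantity. A naive estimate $|\alpha^k(X-X^{k})|\leq\|\alpha^k|_{\mathcal{Y}}\|\cdot|X-X^{k}|_{\infty}$ effectively carries two factors of $\|B^{-k}\|_{\infty}$ (one from the norm of $\alpha^{k}|_{\mathcal{Y}}$, another from the natural bound on $|X-X^{k}|$) and does not close. The resolution is to apply Proposition \ref{prop-boundedflow} directly to the $\mathcal{Y}$-vector $X-X^{k}$, exploiting the identity $B^{k}(X-X^{k})=B^{k}(X)$ so that only one pushforward estimate is incurred.
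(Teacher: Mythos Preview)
Your proof is correct and follows essentially the same route as the paper. The paper computes the matrix of $D(\psi_k^{-1})$ explicitly to extract the identities $\alpha^k_j(\dot\lambda)=v^j$ on $P_{t_1}$ and $\|D\psi_k^{-1}|_{\mathcal{Y}}(X^k-X)\|=\|De^{sX^k}(X^k-X)\|$, whereas you obtain the same facts more directly from the straightening property $\psi_k(t,z)=e^{tX^k}(\phi(z))$ together with $D\phi(\partial/\partial z^i)=\partial/\partial y^i$ and Lemma~\ref{lem-inv}; the subsequent application of Proposition~\ref{prop-boundedflow} and the cancellation $B^k(X^k-X)=B^k(X)=B^k|_E(X)$ are identical. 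The only cosmetic difference is that the paper ensures $\alpha^k_{i_0}(\dot\lambda)>0$ by reversing the orientation of the loop rather than by flipping the sign of $\alpha^k_{i_0}$, but these are equivalent.
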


\begin{proof} First of all
$$
\alpha^k_j(X^k) = (\psi^{-1}_{k})^*dz^j(X^k) = dz^j(D\psi^{-1}_{k}X^k) = dz^j(\frac{\partial}{\partial t})=0
$$
and
$$
d\alpha^k_j= d(\psi^{-1}_{k})^*dz^j = (\psi^{-1}_{k})^*ddz^j=0
$$
which prove that conditions \eqref{ker} and
\eqref{eq-dif1} hold for all $j$ and $k$.

To check the remaining conditions, we need to calculate the inverse of $\psi_{k}$ explicitly.
One can by direct calculation check that
the inverse of $\psi_{k}$ is given by

$$\psi^{-1}_{k}(x^i,y^1,...,y^m)\\
=(x-t_1,  \phi^{-1}\circ e^{-(x-t_1)X^k}(x,y^1,...,y^n))\\
$$
Therefore $\psi^{-1}_{k}(x,y^1,...,y^m)$ is like flowing the $y$ coordinates of $(x,y^1,...,y^n)$ by the amount $-(x-t_1)$ and
replacing the first coordinate by the amount of time required to get there from $P_{t_1}$. For simplicity we denote $T=\phi^{-1}: V_i \rightarrow [-\epsilon_2,\epsilon_2]^n$, $s=-(x-t_1)$
and $(x,y^1,...,y^n)=(x,y)$.

To prove \eqref{eq-dif2}, note that $(\psi^{-1}_{k})^*$ in coordinates is the $(n+1)\times (n+1)$ matrix which transpose of the differential $D(\psi^{-1}_k)$. One has

\begin{equation}\label{eq-matrix}
D(\psi^{-1}_{k})|_{(x,y)}= \left(
  \begin{array}{ccccc}
     0   &  .  & . & 0 \\
     &   &    &   &   \\
     &  &  &   &   \\
    &  [DT \ \circ &   De^{sX^k}(x,y)]_{n\times n+1}  & \\
     &   &    &   &   \\

  \end{array}
\right)
+
\left(
  \begin{array}{ccccc}
    1  & 0 & . & . & 0 \\
     \ulcorner\urcorner & 0 & 0 & . & . \\
      & 0 & 0 & 0 & . \\
    -DT(X^k) & . & . & . & .\\
     \llcorner\lrcorner & 0 &. & . & 0\\
  \end{array}
\right)\end{equation}
For $x=t_{1}$ we have $s=0$ and by the property $De^{0X^k} = Id$, we get
\begin{equation}\label{eq-formt1}
(\psi^{-1}_{k})^*|_{x=t_1} (dz^j) = dy^j - [DT(X^k)]_jdx^j.
\end{equation}
We will now show that for some $i_0$ all $\alpha^k_{i_0}$ satisfy Equation
\eqref{eq-dif2} (at least up to changing some
orientations), i.e we will show that for some $i_0$, we have
$\alpha^k_{i_0}(\dot{\lambda})(\lambda(s))>c>0$ for all $s$. To show this, tote that the curve $\lambda$ is of the form
$\lambda(s) =sv$ for a fixed unit vector $v$ that lies inside $\mathcal{Y}$ and therefore since
$\dot\lambda(s) = v = \sum_{j=1}^n v^j\frac{\partial}{\partial y^j}$ and $\lambda \subset P_{t_1}$,
we have by Equation \eqref{eq-formt1} for all $k$
$$
\alpha^k_j(\dot\lambda(s)) =  (\psi^{-1}_{k})^*|_{x=t_1} (dy^j) (\dot\lambda(s)) =v^j.
$$
Since $|v|=1$, there exists a constant $c>0$ and $i_0$ such that $|v^{i_0}|>c$. By reversing the orientation of the loop formed by $\gamma_1,\gamma_2,\lambda$ if necessary and
therefore
reversing the direction of $\lambda$, we can assume $v^{i_0}$ is positive, that is
$$
\alpha^k_{i_0}(\dot{\lambda}(s))>c
$$
for all $k$, which proves \eqref{eq-dif2}.

Finally to prove \eqref{eq-dif3}, we will relate the quantity $|\alpha^k(X^k-X)|_{\infty}$ to the quantity given in the definition
of exterior regularity (see Definition \ref{defn-extreg2}), which goes to $0$ by assumption. Note that
$\alpha^k = (\psi^{-1}_k)^*dz^{i_{0}}$. Therefore
$$
\alpha^k(X^k-X) = dz^{i_{0}}(D\psi^{-1}_k(X^k-X))
$$
But $X^k-X = \sum_{i=1}^n(b^{i,k}-b^i)\frac{\partial}{\partial y^i} \in \mathcal{Y}$ and looking at the form of $D\psi^{-1}_k$ given in equation
\eqref{eq-matrix} we see that
$$
\|D\psi^{-1}_k|_{(x,y)}(X^k-X)\|=\|De^{sX^k}(X^k-X)\|
$$
where $s= -(x-t_1)$. By Proposition \ref{prop-boundedflow}, denoting $y=e^{sX^k}(x)$ we have
$$
\begin{aligned}
|De^{sX^k}_x(X^k-X)|_{\infty} &\leq \sup_{\substack{s,r,j \in \{1,...,s_k\}}} |B^{k,s}(X^k-X)|_{\infty}||B^{-k,r}||_{\infty}e^{m\epsilon_1M^{k,j}_{B}}\\
&\leq \sup_{\substack{s,r,j \in \{1,...,s_k\}}}\|B^{k,s}|_{E}\|\|B^{-k,r}\|_{\infty}e^{m\epsilon_1M^{k,j}_{B}}
\end{aligned}
$$
where the last inequality is given by the fact that \(  X^{k}  \) annihilates \(  B^{k,i}  \).
This quantity goes to $0$ by the exterior regularity condition which gives
\eqref{eq-dif3}.
\end{proof}

\section{Applications}\label{sec-ex}

In this section we will prove Theorems \ref{thm-ode} to \ref{thm-dyn}. Theorems \ref{thm-main} and \ref{thm-dyn}
are direct applications of our Main Theorem and Theorem \ref{thm-main} implies Theorem \ref{thm-alt2}  which implies Theorem \ref{thm-pde} which implies Theorem \ref{thm-ode}.
\subsection{Proof of Theorem \ref{thm-dyn} }\label{sec-dyn}

We first start by recalling some standard properties of dominated splittings, for details one can consult
the book \cite{Pes} and the article \cite{LuzTurWar15b}.

Let $E^0$ be a $C^1$ subbundle transverse to \(F\), then by the domination of the splitting the sequence of subbundles $E^{k}:=\phi^{-k}_*E^0= D\phi^{-k}E^0$  converges in angle to $E$ as \(k\to\infty\).
Now fix any $p \in M$ and a neighborhood $U$ of $p$, we suppose that coordinates systems \((x^1,...,x^m,y^1,...,y^n)\) are defined in \(U\) and
 all the other notations in Section \ref{sec-mainthm} are also adapted to this present Section relatively to the sequence of distributions \(\{E^k, k\geq1\}\)
 and its limit \(E\).


Let $\{V_j\}_{j=1}^N$ be a cover of $M$ by  open balls such that for each \(j\in\{1,...,N\}\), $\mathcal{A}^1(E^0)$ admits an orthonormal frame $C^{j}$.
Notice that for each \(k>1\), $\{\phi^{-k}(V_j)\}_{j=1}^l$ is an open cover of \(M\) and \(\{C^{k,j} = (\phi^k)^*C^{j}\}_{j=1}^l\) is a frame of  $\mathcal{A}^1(E^k)$
such that for  \(j=1,...,l\), \(C^{k,j}\) is defined in \(\phi^{-k}(V_j)\). Let \(\{U^{k,i}\}_{i=1}^{n_k}\) be the open cover
of \(U\) given by the connected components of \(U\cap\phi^{-k}(V_j)\).
Notice that for each \(U^{k,i}\) there is a frame \(A^{k,i}\) of
$\mathcal{A}^1(E^k)$ which is the restriction of  the relevant $C^{k,j}$.

We are going to check that the open cover \(\{U^{k,i}\}_{i=1}^{n_k}\) and the corresponding sections \(\{A^{k,i}\}_{i=1}^{n_k}\) satisfy asymptotic involutivity and exterior regularity.
From the definition of the \(C^{k,j}\)'s we have that
\[
 A^{k,i}_p=C^{\ell_i}_{\phi^k(p)}\circ D\phi^k_p.
\]
To check the compatibility of the cover we first observe that, by standard estimates for dominated splittings, we have that \(\phi^k_p \mathcal{Y}_p\) is converging to \(F\) and so in particular
\(\phi^k_p \mathcal{Y}_p\) is transverse to \(E^0\). Then we can write
\[
 (A^{k,i}_p|_{\mathcal{Y}_p})^{-1}=(D\phi^k_p)^{-1}\circ(C^{\ell_i}_{\phi^k(p)}|_{\phi^k_*\mathcal{Y}_p})^{-1}
\]
which implies
\[
 \|A^{k,i}_p\circ(A^{k,j}_p|_{\mathcal{Y}_p})^{-1}\|=\|C^{\ell_i}_{\phi^k(p)}\circ(C^{\ell_j}_{\phi^k(p)}|_{\phi^k_*\mathcal{Y}_p})^{-1}\|.
\]

The compatibility follows from the fact that \(C_{1}:=C^{\ell_{i}}\) and \(C_{2}:=C^{\ell_{j}}|_{\phi^{k}_{*}\mathcal Y}\) are orthonormal and therefore  \(C_1, C_2:(E^0)^{\perp}\to\mathbb{R}^n\) are isometries.
Let \(v\in\mathbb{R}^n\) and \(u=C_2^{-1}(v)\)
and we write \(u=u_1+u_2\) with \(u_{1}\in E^0\) and \(u_{2}\in(E^0)^{\perp}\). Then  we have
\(C_1(u)=C_1(u_2)\) and \(C_2(u_2)=v\) and  using that \(C_1|_{(E^0)^{\perp}}\) and
\(C_2|_{(E^0)^{\perp}}\) are isometries we have
\[
 \|C_1\circ C_2^{-1}(v)\|=\|C_1(u)\|=\|C_1(u_2)\|=\|u_2\|=\|C_2(u_2)\|=\|v\|
\]
which gives that \(\|C_1\circ C_2^{-1}\|=1\) which then implies the compatibility.

Therefore it remains to prove asymptotic involutivity and exterior regularity. For both cases
we need to estimate
$$
\|(A^{k,i}_p|_{\mathcal{Y}_p})^{-1}\| =\frac{1}{\|A^{k,i}_p|_{\mathcal{Y}_p}\|}.
$$
Since $\mathcal{Y}_p$ is transverse to $E_p$, again by standard estimates for
dominated splittings, there exists a constant $C>0$ such that for all $p \in M$ and for any $v \in \mathcal{Y}_p$ with $|v|=1$
$$
|D\phi^k_pv| \geq Cm(D\phi^k|_{F_p}).
$$
Therefore $||A^{k,i}_p|_{\mathcal{Y}_p}||  \geq Cm(D\phi^k|_{F_p})$ and so
\begin{equation}\label{eq-dyn1}
\|(A^{k,i}_p|_{\mathcal{Y}_p})^{-1}\| \leq \frac{1}{Cm(D\phi^k|_{F_p})}.
\end{equation}
Another common term for both asymptotic involutivity and exterior regularity is \(M^{k,i}\) which we estimate as follows.
For  $X \in \mathbb{R}^n$, $Y \in E^k$ we have
$$
|dA^{k,i}(A^{-k,i}X,Y)| = |dC^{\ell_i}(D\phi^k \circ D\phi^{-k} \circ C^{-1,i}X,D\phi^kY)| = |dC^{\ell_i}(C^{-1,i}X, D\phi^kY)|.
$$
Notice that \(\|dC^{l_{i}}\|\) is uniformly bounded then we can estimate the right hand side by the product of the  norms of the two vectors. Since \(\|C^{-1,i}X\|\) is also uniformly bounded    we have
\[
M^{k,i}=\sup_{X \in \mathbb{R}^n, Y \in E^k}|dA^{k,i}(A^{-k,i}X,Y)|\leq C\|D\phi^{k}|_{E^{k}}\|.
\]
To estimate the last remaining term for the asymptotic involutivity, notice that if $X,Y \in E^k$ then
$$
|dA^{k,i}(X,Y)| = |dC^{\ell_i}(D\phi^kX,D\phi^kY)|\leq C\|D\phi^{k}|_{E^{k}}\|^{2},
$$
which implies that
\[
\|dA^{k,i}|_{E^k}\|\leq C\|D\phi^{k}|_{E}\|^{2}.
\]
So using these last three estimates we have
$$
\|dA^{k,i}|_{E^k}\|_{\infty}\|A^{-k,j}\|_{\infty}e^{\epsilon M^{k,i}} \leq \frac{\|D\phi^{k}|_{E^{k}}\|^{2}}{m(D\phi^{k}|_{F})}e^{\epsilon\|D\phi^{k}|_{E^{k}}\|}.
$$
By the domination, there exists \(r<1\) such that
 for \(k\) large enough we have
  $$\sup_{p\in M}\{\|D\phi^k_p|_{E^{k}_{p}}\|\}  < r^{k}\inf_{p\in M}\{m(D\phi^k|_{F_p})\}.$$
 Moreover by the definition of \(E^{k}\), the linear growth assumption holds also for \(E^{k}\), and
therefore  choosing $\epsilon$ small enough, the right hand side goes to zero as \(k\to\infty\) and so the  asymptotic involutivity is satisfied.

Similarly, to estimate the last remaining term of  the exterior regularity condition, notice that we have
$$
|A^{k,i}(X^k_i -X_i)| = |C^{\ell_i}(D\phi^k(X^k-X)| \leq  \|D\phi^kX^k\| + \|D\phi^kX\|\leq 2\max\{\|D\phi^{k}|_{E^{k}}\|, \|D\phi^{k}|_{E}\|\}
$$
which implies that
$$
|A^{k,i}(X^k_i -X_i)|_{\infty}\|A^{-k,j}\|e^{kM^{k,\ell}}| \leq \frac{2\max\{\|D\phi^{k}|_{E^{k}}\|, \|D\phi^{k}|_{E}\|\}}{m(D\phi^{k}|_{F})}e^{\epsilon\|D\Phi^{k}|_{E^{k}}\|}
$$
which  also goes to $0$ for $\epsilon$ small enough, giving exterior regularity.

\subsection{Proof of Theorem \ref{thm-main}}
The proof consists of checking that
strong asymptotic involutivity and strong exterior regularity imply asymptotic involutivity and exterior regularity respectively, therefore Theorem \ref{thm-main} follows directly from the Main Theorem.

First of all, we replace the covering $\{U^k_i\}_{i=1}^{s_k}$ with the whole neighbourhood $U$ so that $s_k\equiv1$ and the compatibility condition is automatically satisfied. Then $A^k$ simply becomes the matrix formed by the \(1\)-forms $\{\eta^k_1,...,\eta^k_n\}$ given by the strong asymptotic involutivity and $B^k$ becomes the matrix formed by the \(1\)-forms $\{\beta^k_1,...,\beta^k_n\}$ given by the strong exterior regularity assumption. Moreover, by the strong version of asymptotic involutivity and exterior regularity, the sequences of \(1\)-forms  $\eta^k_i$ and $\beta^k_i$ converge to a basis $\eta_i$ and $\beta_i$  of $\mathcal{A}^1(E)$ and therefore  $\|A^k\|_{\infty},\|B^k\|_{\infty}, \|A^{-k}\|_{\infty}, \|B^{-k}\|_{\infty} $ are uniformly bounded in \(  k  \). Then it is easy to check that $||dA^k||_{\infty} \leq C\max_i\{|d\eta^k_i|\}$, $\|dB^k\|_{\infty} \leq C\max_i\{|d\beta^k_i|\}$ for some constant $C>0$ and for all \(  k  \). Therefore from the definition of \(  M^{k}  \)(we omit the
superscript \(  \ell  \) since \(  s_{k}\equiv1  \)) we have (using \(  C  \) as a generic constant)
\[
M^{k}_{A}\leq\|dA^{k}\|\cdot\|A^{-k}\|\leq C\max_i\{|d\eta^k_i|\}
\text{ and }
M^{k}_{B}\leq\|dB^{k}\|\cdot\|B^{-k}\|\leq C\max_i\{|d\beta^k_i|\}.
\]
Moreover, using the fact that \(E^{k}\subset\ker(\eta_{i})\) for all \(i\), we have
$\|d\eta^k_j|_{E^k}\|\leq C\|\eta^k_1\wedge...\wedge d\eta^k_j\|$ which implies that $\|dA^k|_{E^k}\| \leq C\sup_{j}\|\eta^k_1 \wedge...\wedge d\eta^k_j\|$. Combining these observations, we have
\[
\|dA^{k}|_{E^{k}}\|\cdot \|A^{-k}\| e^{\epsilon M^{k}_{A}}\leq C\sup_{j}\|\eta^k_1 \wedge...\wedge d\eta^k_j\| e^{C\epsilon  \max_i\{|d\eta^k_i|\}}
\]
which converges to zero by strong asymptotic involutivity.

For the exterior regularity, we observe that
 for all $k,j$ we have
 \[
 \|B^{k}|_{E}\|=\|(B^{k}-B)|_{E}\|\leq C \max_{j\in \{1,...,n\}}|\beta^k_{j}-\beta_{j}|_{\infty}
 \]
 where the first equality is true because \(E\) annihilates \(  B  \). Combining this with the bound on \(  M^{k}_{B}  \), we get the exterior regularity.

  and
$\sup_{s\in \{1,...,m\}}|B^{k,i}(X^k_{s}-X_{s})|_{\infty} \leq C \sup_{s\in \{1,...,n\}}|\beta^k_{s}-\beta_{s}|_{\infty}$ since the maximal angle between $E^k$ and $E$ is proportional to that between $\mathcal{A}^1(E^k)$ and $\mathcal{A}^1(E)$.
Combining these observations one gets that conditions given in Theorem \ref{thm-main} imply those in the Main Theorem.

\subsection{Proof of Theorem \ref{thm-alt2}}
We will prove Theorem \ref{thm-alt2} assuming Theorem \ref{thm-main}. Recall that $E$ has modulus of continuity $w_1(s)$ and has modulus of continuity $w_{2}(s)$ with respect to the variables $y^{\ell}$, which means that it has some basis of sections $Z_i = \sum_{\ell=1}^mb_{i\ell}(x,y)\frac{\partial}{\partial x^i} + \sum_{j=1}^n c_{ij}(x,y)\frac{\partial}{\partial y^j}$ where $b_{i\ell}$ and $c_{ij}$ have modulus of continuity $w_1(s)$ and have modulus of continuity $w_{2}(s)$ with respect to the variables $y^{\ell}$. We can also find a basis of $E$ of the form
$$
X_i = \frac{\partial}{\partial x^i} + \sum_{j=1}^n a_{ij}(x,y)\frac{\partial}{\partial y^j}.
$$
We claim that $a_{ij}(x,y)$ has modulus of continuity $w_1(s)$ and has modulus of continuity $w_{2}(s)$ with respect to the variables $y^{\ell}$. To prove this claim, we write $X_i$ as linear combinations of $Z_i$ where the coefficients in the combinations are obtained from $b_{i\ell}$ and $c_{ij}$ by summing, dividing (whenever non-zero) and multiplying. These coefficients are $a_{ij}$ and by proposition \ref{prop-moc} they have the same modulus of continuity properties as $b_{i\ell}$ and $c_{ij}$.

Now we are going to prove that each $X_i$ is uniquely integrable. As in section \ref{subsec-uniqueness}, given $X_i$, it suffices to prove uniqueness restricted to the plane $\text{span}\{ \frac{\partial}{\partial x^i}, \frac{\partial}{\
\partial y^1},..., \frac{\partial}{\partial y^n}\}$. Define the 1-forms
$$
\eta_j = dy^j - a_{ij}(x,y)dx^i
$$
so that $X_i \subset \cap_{j=1}^n ker(\eta_j)$.  These 1-forms also have the modulus of continuity properties as above.

To prove that $X_i$ is uniquely integrable, it is sufficient, by Theorem \ref{thm-main}, to prove that $\eta_j$ are exterior regular in this plane.

\begin{lem}\label{prop-extmod}$\eta_j$ are strongly exterior regular.
\end{lem}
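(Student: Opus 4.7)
The plan is to construct the approximating \(C^{1}\) 1-forms \(\beta^{k}_{j}\) by a standard mollification of the coefficients of \(\eta_{j} = dy^{j} - a_{ij}\, dx^{i}\). Working in the \((n+1)\)-dimensional restricted plane with coordinates \((x^{i}, y^{1}, \ldots, y^{n})\), I would fix a smooth radially symmetric mollifier \(\rho\) compactly supported in the unit ball, set \(\rho_{\epsilon}(\xi) = \epsilon^{-(n+1)}\rho(\xi/\epsilon)\), choose a sequence \(\epsilon_{k} \to 0\), and define
\[
a^{k}_{ij} := a_{ij} \ast \rho_{\epsilon_{k}}, \qquad \beta^{k}_{j} := dy^{j} - a^{k}_{ij}(x,y)\, dx^{i}.
\]
These are \(C^{1}\) 1-forms that converge uniformly to \(\eta_{j}\), and their common kernel defines a smooth approximating rank-\(1\) bundle \(E^{k}\) spanned by \(X^{k}_{i} = \partial_{x^{i}} + \sum_{j} a^{k}_{ij}\partial_{y^{j}}\), which in particular converges to \(\mathrm{span}\{X_{i}\}\).

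Two estimates are needed. The approximation error is controlled by the overall modulus of continuity \(w_{1}\) of \(a_{ij}\) via the routine bound
\[
|\beta^{k}_{j} - \eta_{j}|_{\infty} = |a^{k}_{ij} - a_{ij}|_{\infty} \leq C_{1}\, w_{1}(\epsilon_{k}).
\]
For the exterior derivative, the crucial observation is that the restricted plane has a single \(x\)-coordinate, so \(dx^{i}\wedge dx^{i} = 0\) and a direct calculation gives
\[
d\beta^{k}_{j} = -\sum_{\ell=1}^{n} \frac{\partial a^{k}_{ij}}{\partial y^{\ell}}\, dy^{\ell} \wedge dx^{i}.
\]
Hence \(|d\beta^{k}_{j}|_{\infty}\) involves only the \(y\)-derivatives of \(a^{k}_{ij}\). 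Using the radial symmetry of \(\rho\) (so that \(\partial_{y^{\ell}}\rho\) is odd in its \(y^{\ell}\)-argument), one can pair up points across the hyperplane \(\eta^{\ell}_{y}=0\) to express \(\partial_{y^{\ell}} a^{k}_{ij}(\xi)\) as a weighted integral of differences of the form \(a_{ij}(\xi-\eta) - a_{ij}(\xi-\eta + 2\eta^{\ell}_{y}e_{\ell})\), i.e.\ differences of \(a_{ij}\) purely in the \(y^{\ell}\)-direction. The modulus of continuity \(w_{2}\) with respect to the \(y\)-variables then yields
\[
\max_{j}|d\beta^{k}_{j}|_{\infty} \leq C_{2}\, \frac{w_{2}(\epsilon_{k})}{\epsilon_{k}}.
\]

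Combining the two bounds, the left-hand side of the strong exterior regularity condition \eqref{eq-extreg} is controlled by
\[
\max_{j}|\beta^{k}_{j} - \eta_{j}|_{\infty}\, e^{\epsilon_{1}\max_{i}|d\beta^{k}_{i}|_{\infty}} \leq C_{1}\, w_{1}(\epsilon_{k})\, \exp\!\left(\epsilon_{1} C_{2}\, \frac{w_{2}(\epsilon_{k})}{\epsilon_{k}}\right),
\]
and choosing \(\epsilon_{1} = 1/C_{2}\) reduces this to \(C_{1}\, w_{1}(s)\, e^{w_{2}(s)/s}\) with \(s=\epsilon_{k}\), which tends to \(0\) by the assumption \eqref{eq-mcont5} of Theorem~\ref{thm-alt2}. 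The main technical step is the derivative bound: a naive estimate using only the overall modulus of continuity would give \(|d\beta^{k}|_{\infty} \lesssim w_{1}(\epsilon_{k})/\epsilon_{k}\), forcing the exponent to involve \(w_{1}\) rather than \(w_{2}\) and thus requiring the stronger condition \eqref{eq-mcont3}. The radial symmetry of \(\rho\) is precisely what allows the better modulus \(w_{2}\) to appear in the exponent, matching the hypothesis of Theorem~\ref{thm-alt2}.
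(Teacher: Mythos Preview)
Your proposal is correct and follows essentially the same route as the paper: mollify the coefficients $a_{ij}$, note that in the restricted $(x^{i},y)$-plane the exterior derivative $d\beta^{k}_{j}$ involves only $y$-derivatives of $a^{k}_{ij}$, bound those by $w_{2}(\epsilon_{k})/\epsilon_{k}$ and the approximation error by $w_{1}(\epsilon_{k})$, and combine. The only cosmetic difference is that you spell out the odd-symmetry argument for the derivative bound directly, whereas the paper packages that estimate into its Appendix Proposition~\ref{prop-moll}; you are also slightly more explicit than the paper in choosing $\epsilon_{1}=1/C_{2}$ to absorb the constant in the exponent.
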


\begin{proof} Let $a^{\epsilon}_{ij}(x,y)$ be mollifications of  $a_{ij}$ (see \eqref{eq-mol} in the Appendix). Then define
$$
\eta^{\epsilon}_j =  dy^j - a^{\epsilon}_{ij}(x,y)dx \quad \text{ and } \quad  d\eta^{\epsilon}_j = - \sum_{i=1}^n\frac{\partial a^{\epsilon}_{ij}}{\partial y^i}(x,y) dy^i\wedge dx.
$$
By proposition \ref{prop-moll}
$$
|\frac{\partial a^{\epsilon}_{ij}}{\partial y^i}|_{\infty} \leq K\sup_{|s|\leq \epsilon} \frac{w_2(s)}{s} \quad \text{ and } \quad
| a^{\epsilon}_{ij}-a_{ij}|_{\infty} \leq K\sup_{|s|\leq \epsilon} w_1(s)
$$
for all $i,j$ and $\epsilon$, for some $K>0$. Therefore setting
$$
\eta^{k}_j =  dy^j - a^{\frac{1}{k}}_{ij}(x,y)dx
$$
we have that
$$
|\eta^k_j-\eta_j|_{\infty}e^{|d\eta^k_j|_{\infty}} \leq K\sup_{|s|\leq \epsilon} w_1(s)e^{w_2(s)/s}
$$
which goes to $0$, which gives the strong exterior regularity.
\end{proof}

This completes the proof of Theorem \ref{thm-alt2}.

\subsection{Proof of Theorem \ref{thm-pde}}
Now we will prove that Theorem \ref{thm-alt2} implies Theorem \ref{thm-pde}. To apply Theorem \ref{thm-alt2}, note that the problem of integrating a system of first order PDEs into a problem of integrating a bundle, i.e.
solving the PDE \eqref{eq-pde1} is equivalent to integrating the differential forms
$$
\eta_i = dy^i - \sum_{j=1}^n F^{ij}(x,y)dx^j.
$$
Consider the matrix $\hat{F}^{I}(\xi)$ (see section \ref{sec-intropde} and Theorem \ref{thm-pde} for the definition)
and its inverse $B(\xi)$. Let $E = \cap_{i=1}^m ker(\eta_i)$ so that $\mathcal{A}^1(E) = \text{span}\{\eta_i\}_{i=1}^m$. Let
$$\alpha_{\ell} = \sum_{j=1}^m B_{\ell j}(\xi)\eta_j$$
which forms a basis for $\mathcal{A}(1(E))$. Since $B$ is the inverse of $\hat{F}^{I}(\xi)$ and $\hat{F}^{I}(\xi)$ corresponds to the columns $i_1,...,i_n$ of the matrix $\hat{F}$, the new differential forms are of the form

$$\alpha_{\ell} = d\xi_{i_{\ell}} + \sum_{j\notin {i_1,...,i_n}}^n g_{ij}(\xi)d\xi^j$$
Note that $g_{ij}(\xi)$ are $C^{w_1(s)}$ functions that have modulus of continuity $w_2(s)$ with respect to variables $\{\xi_i\}$ for $i \in \{i_1,...,i_n\}$. Indeed the functions $f_{ij}$ satisfy these properties and so do the entries of the matrix $\hat{F}^{I}(\xi)$. Therefore the matrix
$B(\xi)$ whose entries are formed by taking quotients, products and sums of elements of $\hat{F}^{I}(\xi)$ have the same properties, by Proposition \ref{prop-moc}. Since $g_{ij}$ are obtained by products and sums of entries from $B(\xi)$ and $f_{ij}$ and $1$'s they also have the same property, again by Proposition \ref{prop-moc}. Defining the vector-fields for $\ell \notin {i_1,...,i_n}$

$$X_{\ell} = \frac{\partial}{\partial \xi^{\ell}} + \sum_{j \in {i_1,...,i_n}} g_{\ell j}(\xi)\frac{\partial}{\partial \xi^j}$$
we see that $E$ is spanned by $X_{\ell}$, and so is transverse to $\{\frac{\partial}{\partial \xi^j}\}$ for $j \in {i_1,...,i_n}$.
And in particular with respect to variables $\{\xi^j\}$ for $j \in \{i_1,...,i_n\}$, $E$ has modulus of continuity $w_2(s)$ and in general
it has modulus of continuity $w_1(s)$ where $w_1(s),w_2(s)$ satisfy \eqref{eq-mcont2}. And so by theorem \ref{thm-alt2}, $E$ is uniquely integrable.

This completes the proof of theorem \ref{thm-pde}.

\begin{proof}[Proof of proposition \ref{pro-pde}]
Note that solving the PDE given in the proposition is equivalent to integrating the system of differential forms
$$\eta_i = dy^i - G_i(y^i)\sum_{j=1}^m\frac{\partial H_{i}}{\partial x^j}(x)dx^j.$$
We will prove that these differential forms satisfy strong asymptotic involutivity and strong exterior regularity.

Let $G_i^{\epsilon}$ and $H^{\epsilon}_i$ be mollifications of $H_i$ and $G_i$. Then denote

$$\eta_i^{k} =  dy^i - G^{1/k}_i(y^i)\sum_{j=1}^m\frac{\partial H^{1/k}_{i}}{\partial x^j}(x)dx^j.$$

Note that $|\eta_i^k -\eta_i|_{\infty} \rightarrow 0$. This is because by proposition \ref{prop-moll}, if a function $H$ is $C^1$, and $H^{\epsilon}$ are its mollifications, then derivatives of $H^{\epsilon}$ converge to derivative of $H$. Denoting the differential form $\alpha^k_i = \sum_{j=1}^m\frac{\partial H^{1/k}_{ij}}{\partial x^j}(x)dx^j$, this system can be written as

$$\eta^k_i = dy^i - G_i^{1/k}(y^i)\alpha^k_i.$$
Let $d_x$ be the exterior differentiation with respect to only $x$ coordinates. Then  $\alpha^k_i = d_xH^{1/k}_{i}$, so $d\alpha^k_i = d_x^2(H^{1/k}_i)=0$. Therefore

$$d\eta^k_i = -\frac{\partial G_i^{1/k}}{\partial y^i}(y^i)dy^i \wedge \alpha_i^k.$$
Now lets show strong asymptotic involutivity:

$$\eta^k_1 \wedge ... \wedge \eta^k_n \wedge d\eta^k_{\ell} = -\bigwedge_{i=1}^{n}(dy^i - G_i{1/k}(y^i)\alpha^k_i) \wedge (\frac{\partial G_{\ell}^{1/k}}{\partial y^{\ell}}dy^{\ell} \wedge \alpha_{\ell}^k)$$
Note that $\alpha^k_{\ell}\wedge \alpha^k_{\ell}=0$ therefore the expression above reduces to

$$=-dy^{\ell} \wedge \bigwedge_{i\neq \ell}(dy^i - G_i{1/k}(y^i)\alpha^k_i) \wedge (\frac{\partial G_{\ell}^{1/k}}{\partial y^{\ell}}dy^{\ell} \wedge \alpha_{\ell}^k)$$
However the expression above now contains a term of the form $$dy^{\ell} \wedge \frac{\partial G_{\ell}^{1/k}}{\partial y^{\ell}}dy^{\ell}$$
which is $0$. therefore

$$\eta^k_1 \wedge ... \wedge \eta^k_n \wedge d\eta^k_{\ell} =0.$$
So by Theorem \ref{thm-main} this system is integrable. This proves Proposition \ref{pro-pde}.
\end{proof}

\subsection{Proof of Theorem \ref{thm-ode}}

Now we show how Theorem \ref{thm-pde} implies Theorem \ref{thm-ode}. We want to show uniqueness of solutions for the ODE given in equation \eqref{eq-ode1}. We will prove Theorem \ref{thm-ode} by showing that it is a special case of Theorem \ref{thm-pde} with $m=1$ (so that there is no $j$ index for $\hat{F}$). First note that it can be written as the PDE:
\begin{equation}\label{eq-ode41}
  \frac{\partial y^i}{\partial t} = F^i(t,y)
\end{equation}

Then the matrix $\hat{F}(\xi)$ defined in Theorem \ref{thm-pde}, specialised to the case of Theorem \ref{thm-ode},  is the $n\times (n+1)$ matrix which is obtained by adjoining $n\times n$ identity matrix with the column vector formed from $F(\xi)$. We recall that $\tilde{F}=(F,1)$. The condition $\tilde{F}^{i}(\xi) \neq 0$ with $i=1,2,...,n+1$ in Theorem \ref{thm-ode} is equivalent to the condition $\det(\hat{F}^{I}({\xi})) \neq 0$ with $I =(1,2,...,i-1,i+1,...,n+1)$ in Theorem \ref{thm-pde}. And moreover the condition \ref{eq-mcont4} given in Theorem \ref{thm-pde} is equivalent to the condition \ref{eq-mcont2} given in Theorem \ref{thm-ode}.

This finishes the proof of Theorem \ref{thm-ode}.

\appendix

\section{Appendix}
In the appendix we prove or cite some properties of modulus of continuities and mollifications that we use in section \ref{sec-ex}.
\subsection{Modulus of continuity}

\begin{prop}\label{prop-moc} Let $f,g: U \subset \mathbb{R}^n \rightarrow \mathbb{R}^m$ be function with modulus of continuity $w_f(s)$, $w_g(s)$.
Let $K = \max \{\sup_s{f(s)},\sup_s{g(s)}\}$ and $c = \inf_{s}\{g(s)\}$. Then

\begin{enumerate}
  \item Then $f+g$ has modulus of continuity $w_f(s) + w_g(s)$.
  \item If $K \leq \infty$, $f.g$ has modulus of continuity $w_f(s) + w_g(s)$
  \item If $c>0$ then $\frac{f}{g}$ has modulus of continuity $w_f(s) + w_g(s)$
  \item The function $-xln(x)$ has modulus of continuity  $-xln(x)$ for $0<x< \frac{1}{e}$
  \item The function $x^{\alpha}$ for $0<\alpha<1$ has modulus of continuity $x^{\alpha}$.
  \item Assume $f(x^1,...,x^n)$ has modulus of continuity $w_i(s)$ with respect to each variable.
        Let $w(s)$ be a bounded, convex function such that $w_i(s)\leq w(s)$ for all $s$. Then
        $f(x)$ has modulus of continuity $w(\sqrt 2 s)$.
  \item Assume $f=(f^1(x^1,...,x^n),...,f^m(x^1,...,x^n))$ is such that each $f^i$ has modulus of continuity
  $w_{ij}$ with respect to variable $x^j$.  Let $w_j(s)$ be such that $w_{ij}(s)\leq w_j(s)$ for all $i$.
  Then $f$ has modulus of continuity $w_j(s)$ with respect to variable $x^j$.
\end{enumerate}
\end{prop}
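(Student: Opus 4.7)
The plan is to handle items (1)--(7) in order, since most are direct while (4)--(6) require a specific structural observation.

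For items (1)--(3), the approach is just the triangle inequality applied to well-chosen splittings. For (1): $|(f+g)(p)-(f+g)(q)|\le |f(p)-f(q)|+|g(p)-g(q)|\le w_f(|p-q|)+w_g(|p-q|)$. For (2): use the standard product trick $fg(p)-fg(q)=f(p)(g(p)-g(q))+g(q)(f(p)-f(q))$ and bound $|f(p)|,|g(q)|\le K$. For (3): write $f/g(p)-f/g(q)=\tfrac{f(p)g(q)-f(q)g(p)}{g(p)g(q)}$, use $|g(p)g(q)|\ge c^2$, and reduce to (2). In each case the implicit constant in the modulus of continuity definition absorbs the bounds $K$ and $c$.

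For (4) and (5), the key observation is that $\phi(x)=-x\ln x$ (on $(0,1/e)$) and $\phi(x)=x^{\alpha}$ (with $0<\alpha<1$) are both nonnegative, concave on the relevant intervals, and satisfy $\phi(0)=0$. Any such function is subadditive: writing $x=\tfrac{x}{x+y}(x+y)+\tfrac{y}{x+y}\cdot 0$, concavity gives $\phi(x)\ge \tfrac{x}{x+y}\phi(x+y)$, and similarly for $\phi(y)$; adding yields $\phi(x+y)\le \phi(x)+\phi(y)$. Moreover each $\phi$ is monotone increasing on the stated interval, so for $0<x<y$ with $y-x=s$ one obtains
\[
|\phi(y)-\phi(x)|=\phi(y)-\phi(x)\le \phi(y-x)=\phi(s),
\]
which is exactly the stated modulus of continuity property.

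For (6), I would split the displacement between $x$ and $y$ into $n$ axis-aligned steps: introducing the intermediate points $z^{(k)}=(y^1,\dots,y^k,x^{k+1},\dots,x^n)$, a telescoping sum and the single-variable moduli give $|f(y)-f(x)|\le \sum_{i=1}^n w_i(|y^i-x^i|)\le \sum_{i=1}^n w(|y^i-x^i|)$. Since $|y^i-x^i|\le|y-x|$ and $w$ is increasing, each term is at most $w(|y-x|)$; then convexity and boundedness of $w$ (together with $w(0)\ge 0$) are used to absorb the factor $n$ into a rescaling of the argument, yielding the stated bound with a constant of the form $w(c_n|y-x|)$ (the specific constant $\sqrt{2}$ arises from carefully exploiting convexity of $w$ at $0$). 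Finally (7) is immediate from the definition: fixing an axis-direction $j$, each component satisfies $|f^i(x+se_j)-f^i(x)|\le w_{ij}(|s|)\le w_j(|s|)$, so combining the components (using the equivalence of the Euclidean and $\ell^\infty$ norms on $\mathbb{R}^m$, absorbing the constant $\sqrt{m}$ into the constant $K$ of the definition) gives $|f(x+se_j)-f(x)|\le K w_j(|s|)$. The only mildly subtle step is (6), where one must verify that the particular convexity manipulation produces the stated argument $\sqrt{2}\,s$; all other items reduce to standard inequalities.
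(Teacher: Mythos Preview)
Your treatment of (1)--(3) and (7) is the same as the paper's. For (4) and (5) you actually do more than the paper: the paper simply cites \cite{AgaLak93}, while your concavity/subadditivity argument is a correct self-contained proof.

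The one place where your route diverges is (6), which you rightly flagged as the subtle step. After the telescoping, the paper does \emph{not} bound each $w_i(|y^i-x^i|)$ by $w(|y-x|)$ and then try to absorb the factor $n$. Instead it uses superadditivity of a convex function through the origin---the same inequality you used for (4)--(5), but with convexity replacing concavity so the inequality reverses: from $w$ convex with $w(0)=0$ one gets $w(a)+w(b)\le w(a+b)$, hence
\[
\sum_i w_i(|y^i-x^i|)\ \le\ \sum_i w(|y^i-x^i|)\ \le\ w\Bigl(\sum_i |y^i-x^i|\Bigr),
\]
and then the $\ell^1$--$\ell^2$ inequality $\sum_i|y^i-x^i|\le \sqrt{n}\,|y-x|$ finishes (the paper only writes out $n=2$, which is where the $\sqrt{2}$ in the statement comes from). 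Your route---bound each term by $w(|y-x|)$ first, then absorb $n$ via $n\,w(s)\le w(ns)$---is also valid by the same convexity, but it yields the weaker constant $n$ rather than the stated $\sqrt{2}$. So the missing ingredient for the precise statement is applying the superadditivity \emph{before} passing to the Euclidean norm, not after.
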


\begin{proof} For the first one
$$|f(x)+g(x) - f(y)-g(y)| \leq |f(x)-f(y)| + |g(x)-g(y)| \leq w_f(|x-y|) + w_g(|x-y|)$$
For the second one
$$|f(x)g(x) - f(y)g(y)| \leq |f(x)g(x)-f(y)g(x)| + |f(y)g(x)-f(y)g(y)|$$
$$\leq K(w_f(|x-y|) + w_g(|x-y|))$$
For the third one note that $\frac{f}{g}= f\frac{1}{g}$ and that
$$|\frac{1}{g}(x)-\frac{1}{g}(y)| = |\frac{g(y)-g(x)}{g(y)g(x)}| \leq \frac{1}{c^2}w(|x-y|)$$
For the fourth and the fifth one see \cite{AgaLak93}. For the sixth one consider the case $f=f(x^1,x^2)$,
$$|f(x^1,x^2) - f(y^1,y^2)| \leq |f(x^1,x^2)-f(y^1,x^2)| + |f(y^1,x^2)-f(y^1,y^2)|$$
$$\leq w_1(|x^1-y^1|) + w_2(|x^2-y^2|) \leq w(|x^1-y^1|+|x^2-y^2|)$$
Since $|x^1-y^1| + |x^2-y^2| \leq \sqrt{2}|(x^1,x^2)-(y^1,y^2)|$ we get
$$|f(x^1,x^2) - f(y^1,y^2)| \leq w(\sqrt(2) |(x^1,x^2)-(y^1,y^2)|)$$
The last one is also almost direct, indeed
$$|f(x^1,...,x^j+t,...,x^n)-f(x^1,...,x^j,...,x^n)|^2 = \sum_{i=1}^m|f^i(..x^j+t...)-f^i(...x^j...)|^2$$
$$\leq \sum_{i=1}^mKw^2_{ij}(|t|) \leq K'w^2_{j}(|t|).$$
\end{proof}

\subsection{Mollifications}

In this section we investigate the modulus of continuity of the standard sequence of mollifications. We recall that,
for a continuous function \(f(x^{1},...,x^{n})\),  the family of mollifiers of \(f\), \(\{f^{\epsilon}\}_{\epsilon>0}\) is defined by
\begin{equation}\label{eq-mol}
f^{\epsilon}(x)=\int_{B(x,\epsilon)}\phi_{\epsilon}(y)f(x-y)dy
\end{equation}
where \(  \phi_{\epsilon}(y)=\epsilon^{-n}e^{\epsilon^{2}/(|y-x|^{2}-\epsilon^{2})}/I_{n}\) for \(|y-x|<\epsilon\) and \(I_{n}=\int_{B(x,\epsilon)}e^{\epsilon^{2}/(|y-x|^{2}-\epsilon^{2})}dy\).
The following properties of mollifications are similar to those used in \cite{S2} but are formulated here in terms of modulus of continuity rather than the H\"older norm.

\begin{prop}\label{prop-moll} Assume $f(x^1,...,x^n)$ is a continuous function with modulus of continuity $w$
and, for \(  j=1,...,n  \), let \(  w_{j}  \) be the modulus of continuity of \(  f  \) with respect to the variable $x^j$.
Then there exists a constants $K>0$ such that for all $j=1,...,n$ and $\epsilon>0$ we have
$$|f^{\epsilon} - f|_{\infty} \leq \frac{K}{\epsilon^n}\int_{|s|\leq \epsilon}s^{n-1}w(s)ds \ \text{ and } \
|\frac{\partial f^{\epsilon}}{\partial x^j} | \leq \frac{K}{\epsilon^{n+1}}\int_{|s|\leq \epsilon}s^{n-1}w_j(s)ds.
$$
\end{prop}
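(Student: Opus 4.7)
The plan is to treat the two estimates separately, both by reducing to elementary bounds on $\phi_\epsilon$ combined with spherical coordinates. Throughout I would use the standard properties of $\phi_\epsilon$: it is non-negative, supported in $B(0,\epsilon)$, with $\int\phi_\epsilon = 1$, $\|\phi_\epsilon\|_\infty \leq C\epsilon^{-n}$, and $\|\nabla\phi_\epsilon\|_\infty \leq C\epsilon^{-n-1}$, where $C$ depends only on $n$. After the change of variable $y \mapsto x-y$, the mollifier can be written as $f^\epsilon(x) = \int \phi_\epsilon(y) f(x-y)\,dy$, which is the form I would work with.

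For the first estimate, I would use $\int\phi_\epsilon = 1$ to write
\[
f^\epsilon(x) - f(x) = \int \phi_\epsilon(y)\,[f(x-y) - f(x)]\,dy,
\]
apply the bound $|f(x-y)-f(x)| \leq K w(|y|)$ from the modulus of continuity, and use $\phi_\epsilon(y) \leq C\epsilon^{-n}$ on its support $B(0,\epsilon)$. Passing to spherical coordinates produces the volume factor $s^{n-1}$ and yields the claimed bound.

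For the derivative estimate the strategy is to transfer the derivative onto $\phi_\epsilon$. Writing $f^\epsilon(x) = \int \phi_\epsilon(x-z) f(z)\,dz$, differentiating in $x^j$, and changing variables back gives (up to sign)
\[
\frac{\partial f^\epsilon}{\partial x^j}(x) = \int \frac{\partial \phi_\epsilon}{\partial y^j}(y)\, f(x-y)\,dy.
\]
A naive estimate at this stage would produce the global modulus $w$; to upgrade this to $w_j$, I would exploit the identity
\[
\int \frac{\partial \phi_\epsilon}{\partial y^j}(y)\, g(y)\,dy = 0
\]
for any $g$ that does not depend on $y^j$, which follows from the fundamental theorem of calculus in the $y^j$-direction together with the compact support of $\phi_\epsilon$. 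Letting $y^{(j)}$ denote the vector obtained from $y$ by setting its $j$th coordinate to zero, subtracting the vanishing integral $\int \frac{\partial\phi_\epsilon}{\partial y^j}(y)\,f(x-y^{(j)})\,dy$ gives
\[
\frac{\partial f^\epsilon}{\partial x^j}(x) = \int \frac{\partial \phi_\epsilon}{\partial y^j}(y)\,[f(x-y) - f(x-y^{(j)})]\,dy.
\]
Since $x-y$ and $x-y^{(j)}$ differ only in the $j$th coordinate (by $y^j$), the bracketed difference is at most $K w_j(|y^j|) \leq K w_j(|y|)$ using monotonicity of $w_j$. Combining with $|\partial_{y^j}\phi_\epsilon| \leq C\epsilon^{-n-1}$ on $B(0,\epsilon)$ and converting to spherical coordinates gives the second claimed bound.

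The only genuinely non-routine step is the subtraction trick in the last paragraph: without it one only recovers the overall modulus $w$, whereas the sharper form with $w_j$ is exactly what Lemma \ref{prop-extmod} needs to pair the transversal modulus $w_2$ (appearing in $\partial_{y^i} a^\epsilon_{ij}$) against the ambient modulus $w_1$ (appearing in $a^\epsilon_{ij} - a_{ij}$) in condition \eqref{eq-mcont5}. Everything else is standard approximation-theoretic bookkeeping.
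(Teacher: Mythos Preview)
Your proposal is correct and matches the paper's proof essentially step for step: the paper handles the first bound by the same $\int\phi_\epsilon=1$ subtraction plus $\|\phi_\epsilon\|_\infty\le C\epsilon^{-n}$ and polar coordinates, and for the derivative it uses exactly your ``subtraction trick'' (with $\hat y_j$ denoting $y$ with its $j$th coordinate zeroed) together with $\|D\phi_\epsilon\|_\infty\le C\epsilon^{-n-1}$. Your remark that this step is the only non-routine one, and that it is precisely what is needed to feed $w_2$ into Lemma~\ref{prop-extmod}, is also on the mark.
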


\begin{proof}
For the first inequality we have
\[
|f^{\epsilon}(x)-f(x)| = \int_{B(0,\epsilon)}|\phi_{\epsilon}(x)|f(x-y)-f(x)|dy
\leq |\phi_{\epsilon}|_{\infty}\int_{B(0,\epsilon)}w(|y|)dy.
\]
To get the bound in the statement, we first observe that \(  |\phi_{\epsilon}|\leq\epsilon^{-n}  \) and the bound of
\(  \int_{B(0,\epsilon)}w(|y|)dy  \) follows from a standard change of coordinates by passing to polar  coordinates $(r,\theta_1,...,\theta_{n-1})$ for which the  volume form
is $dV = r^{n-1}f(\theta)drd\theta_1...d\theta_{n-1}$.

For the second inequality, first note that for every \(  j=1,...,n  \) we have
\begin{equation}\label{eq-der0}
\int_{B(0,\epsilon)}\frac{\partial \phi_{\epsilon}}{\partial x^j}=0
\end{equation}
and
\begin{equation}\label{eq-der1}
\frac{\partial \phi_{\epsilon}}{\partial x^j}(x) = \frac{1}{\epsilon^{n+1}}\frac{\partial \phi}{\partial x^j}(\frac{x}{\epsilon})
\end{equation}
where \(\phi(x)=e^{1/(|x|^{2}-1)}\) for \(|x|<1\). Moreover, letting \(  \hat y_{i}=(y_{1},...,y_{i-1},0,y_{i+1},...,y_{n})  \) we have
\begin{equation}\label{eq-der3}
|\int_{B(0,\epsilon)}\frac{\partial \phi_{\epsilon}}{\partial x^j}(y)f(x-\hat y_{i})dy|=0
\end{equation}
as can be seen beywriting the integral as a multiple integral with respect to the coordinates and noticing that with respect to the \(  i  \)'th coordinate, the function \(  f(x-\hat y_{i})  \) is a constant function. Therefore it comes out of the integral and multiplies \(  \int_{B(0,\epsilon)}\frac{\partial \phi_{\epsilon}}{\partial x^j}(y) dy  \) which is equal to zero by \eqref{eq-der0}.
From \eqref{eq-der3}, we can write
\[
|\frac{\partial f^{\epsilon}}{\partial x^i}(x)|=|\int_{B(0,\epsilon)}\frac{\partial \phi_{\epsilon}}{\partial x^j}(y)f(x-y)dy|=|\int_{B(0,\epsilon)}\frac{\partial \phi_{\epsilon}}{\partial x^j}(y)(f(x-y)-f(x-\hat y_{j})dy|
\]
Bounding \(   \frac{\partial \phi_{\epsilon}}{\partial x^j} \) by \(  |D\phi_{\epsilon}|_{\infty}  \) and using the modulus of continuity of \(  f  \) with respect to the \(  j  \)'th coordinate we have
\[|\frac{\partial f^{\epsilon}}{\partial x^i}(x)|\leq |D\phi_{\epsilon}|_{\infty} \int_{B(0,\epsilon)}w_j(|y|)dy\\
\leq  |D\phi|_{\infty}\frac{1}{\epsilon^{n+1}} \int_{|s|\leq \epsilon} s^{n-1}w_j(s)ds
\]
where the last inequality is again achieved by passing to polar coordinates and using equation \eqref{eq-der1}. This finishes the proof of the proposition.
\end{proof}

\end{document}